\theoremstyle{plain}
\newtheorem{thm}{Theorem}
\newtheorem{lem}{Lemma}
\newtheorem{prop}{Proposition} 
\newtheorem{cor}{Corollary}
\newtheorem{rem}{Remark}
\newcommand{\vecII}[2]{
\ensuremath{
\begin{pmatrix}
#1 \\ #2 \\
\end{pmatrix}}}
\newcommand{\matII}[4]{
\ensuremath{ 
\begin{pmatrix}  
#1 & #2 \\
#3 & #4 \\
\end{pmatrix}}}
\providecommand{\ind}{1}
\providecommand{\sm}{\setminus}
\providecommand{\N}{\mathbb{N}}
\providecommand{\R}{\mathbb{R}}
\providecommand{\C}{\mathbb{C}}
\providecommand{\cF}{\mathcal{F}}
\newcommand{\Ss}{\mathbb{S}}
\providecommand{\les}{\lesssim}
\providecommand{\eps}{\varepsilon}
\providecommand{\drho}{\:\mathrm{d}\rho}
\providecommand{\dxi}{\:\mathrm{d}\xi}
\providecommand{\dr}{\:\mathrm{d}r}
\providecommand{\dy}{\:\mathrm{d}y}
\providecommand{\dz}{\:\mathrm{d}z}
\providecommand{\ov}{\overline}
\providecommand{\skp}[2]{\langle#1,#2\rangle}
\DeclareMathOperator{\supp}{supp}
\DeclareMathOperator{\sign}{sign}
\DeclareMathOperator{\Real}{Re}
\DeclareMathOperator{\Imag}{Im}
\DeclareMathOperator{\loc}{loc}
\DeclareMathOperator{\ran}{ran}
\renewcommand{\qed}{\hfill $\Box$}
\newcommand{\e}[1]{\mathrm{e}^{#1}}
\renewcommand{\i}{\mathrm{i}}
\newcommand{\norm}[1]{\left\lVert #1 \right\rVert}
\renewcommand{\les}{\lesssim}
\renewcommand{\b}[1]{\textcolor{blue}{#1}}
\renewcommand{\r}[1]{\textcolor{red}{#1}}
\def\XXint#1#2#3{{\setbox0=\hbox{$#1{#2#3}{\int}$} 
     \vcenter{\hbox{$#2#3$}}\kern-.5\wd0}}
\begin{document}

\allowdisplaybreaks

\title[Helmholtz equations with a step potential]{An annulus multiplier and applications to the
Limiting absorption principle for Helmholtz equations with a step potential}

\author{Rainer Mandel and Dominic Scheider}
\address{R. Mandel and D. Scheider\hfill\break
Karlsruhe Institute of Technology \hfill\break
Institute for Analysis \hfill\break
Englerstra{\ss}e 2 \hfill\break
D-76131 Karlsruhe, Germany}
\email{rainer.mandel@kit.edu}
\email{dominic.scheider@kit.edu}
\date{\today}

\subjclass[2000]{Primary: 35J05, Secondary: 35Q60}
\keywords{Nonlinear Helmholtz Equation, Limiting Absorption Principle, Step potential}

\begin{abstract}
   We consider the Helmholtz equation  $-\Delta u+V \, u - \lambda \, u = f $ on $\R^n$
   where the potential $V:\R^n\to\R$ is constant on each of the half-spaces $\R^{n-1}\times (-\infty,0)$ and  $\R^{n-1}\times (0,\infty)$. We prove an
   $L^p-L^q$-Limiting Absorption Principle for frequencies $\lambda>\max \, V$ with the aid of Fourier
   Restriction Theory and derive the existence of nontrivial solutions of linear and nonlinear Helmholtz
   equations.
   As a main analytical tool we develop new $L^p-L^q$ estimates for a singular Fourier multiplier supported in an annulus.
\end{abstract}

\maketitle

\section{Introduction} 

  In this paper we are interested in the Limiting Absorption Principle (LAP) for the Helmholtz equation on
  $\R^n$ involving a step potential of the form
  \begin{equation}\label{eq:def_V}
    V(x,y) = \begin{cases}
      V_1 & \text{if } x\in\R^{n-1},y>0, \\
      V_2 & \text{if } x\in\R^{n-1},y<0
    \end{cases}
  \end{equation}
  where $V_1\neq V_2$ are two fixed real numbers. We will without loss of generality assume $V_1>V_2$ in the
  following. Examples for elliptic problems involving interfaces modelled
  by potentials of this kind can be found in~\cite[Theorem~1]{DoNaPlRe_Interfaces},
 \cite[Theorem~2]{DoPlRe_SurfaceGap} 
 or~\cite{HemKohStaVoi_BoundStates}. 
  To explain the motivation behind our study, we recall the interesting phenomenon called
  ``double scattering''. In the context of the Schr\"odinger equation it means that for sufficiently
  regular and fast decaying right hand sides $f$ the unique solution of the initial value problem
  $$
    \i\partial_t \psi - \Delta \psi + V \psi = f \quad\text{in }\R^n,\quad\psi(0)=\psi_0,
  $$
  with $V$ as in~\eqref{eq:def_V}
  splits up into two pieces as $t\to \pm \infty$ that correspond to the two different values of $V$ at
  infinity. This phenomenon is mathematically understood in the one-dimensional case $n=1$
  \cite[Theorem~1.2]{ForVis_DoubleScat}, see also~\cite{DavSim_Scattering,DAncSel_Dispersive}. One byproduct
  of our results is that such a splitting into two pieces may as well be observed for the solutions of the
  corresponding Helmholtz equations in $\R^n$ which are obtained through the Limiting Absorption Principle,
  see for instance the formula~\eqref{eq:formula_u+} where the two parts $f(x,y)1_{(0, \infty)}(\pm y)$ of the right
  hand side contribute differently to the LAP-solution of the Helmholtz equation.
  Notice that solutions $u$ of such Helmholtz equations provide monochromatic solutions
  $\psi(x,t)=\e{\i\lambda t} u(x)$ of the Schr\"odinger equation where $\lambda$
  belongs to the $L^2$-spectrum of the selfadjoint operator $-\Delta+V$ with
  domain $H^2(\R^n)$. We prove our LAP in the topology of Lebesgue spaces in order to treat both linear and
  nonlinear Helmholtz equations. As far as we can see, the more classical  results in weighted $L^2$ spaces resp.
  $B(\R^n),B^*(\R^n)$ (for the definition, cf. \cite[page 4]{AgmonHoer}) by
  Agmon~\cite{Agmon_Spectral,Agmon_Representation,Agmon_ARepresentation} and Agmon-H\"ormander
  \cite{AgmonHoer} do not apply in the nonlinear setting.

 \medskip

 Being interested in the LAP for the Helmholtz operator $-\Delta+V$ we fix the notation
 $$
   \mathcal R(\mu) := (-\Delta+V-\mu)^{-1} \qquad\text{for }\mu\in\C\sm \sigma(-\Delta+V).
 $$
 A computation reveals $\sigma(-\Delta+V)= [\min\{V_1,V_2\},\infty)=[V_2,\infty)$. We aim to prove a LAP
 that is as strong as the corresponding known result for the  constant potential where $V_1=V_2$. In that
 case, $\mathcal R(\mu)$ is bounded from $L^p(\R^n)$ to $L^q(\R^n)$ where $(p,q)\in\mathcal D$ and
 \begin{align*}
   \mathcal D &= \left\{
     (p,q)\in [1,\infty]\times [1,\infty]:\; \frac{1}{p}>\frac{n+1}{2n},\;\frac{1}{q}<\frac{n-1}{2n},\;
     \frac{2}{n+1}\leq \frac{1}{p}-\frac{1}{q}  \leq \frac{2}{n}  
     \right\} \text{ if }n\geq 3, \\
   \mathcal D &= \left\{
     (p,q)\in [1,\infty]\times [1,\infty]:\; \frac{1}{p}>\frac{n+1}{2n},\;\frac{1}{q}<\frac{n-1}{2n},\;
     \frac{2}{n+1}\leq \frac{1}{p}-\frac{1}{q}    < \frac{2}{n}   \right\}
     \text{ if }n=2. 
 \end{align*}
 This is a consequence of results by Kenig-Ruiz-Sogge, Guti\'{e}rrez ($n\geq 3$) and Ev\'{e}quoz ($n=2$) that
 we recall in Theorem~\ref{thm:Gutierrez} along with the bibliographical references. For step potentials of
 the kind~\eqref{eq:def_V} we manage to prove the same result provided that the Restriction Conjecture  is
 true. We refer to Section~\ref{sec:ProofThm} for more information on that topic.
 Given that this conjecture is open for $n\geq 3$, our LAP relies on the best approximation to the Restriction
 Conjecture, which is due to Tao (Theorem~\ref{thm:Tao}). Accordingly, we deal with exponents coming from the set
 \begin{align*}
   \tilde{\mathcal D} 
   &= \left\{
     (p,q)\in [1,\infty]\times [1,\infty]:\; \frac{1}{p}>\frac{1}{p_*(n)},\;\frac{1}{q}<\frac{1}{q_*(n)},\;
    \frac{2}{n+1}\leq \frac{1}{p}-\frac{1}{q} \leq \frac{2}{n}  
     \right\} \text{ if }n\geq 3, \\
   \tilde{\mathcal D} &= \left\{
     (p,q)\in [1,\infty]\times [1,\infty]:\;\frac{1}{p}>\frac{1}{p_*(n)},\;\frac{1}{q}<\frac{1}{q_*(n)},\;
    \frac{2}{n+1}\leq \frac{1}{p}-\frac{1}{q} < \frac{2}{n}  
     \right\} \text{ if }n=2, \\      
     &\text{where}\qquad p_*(n) =\frac{2(n+2)}{n+4}, \quad q_*(n)=\frac{2(n+2)}{n}. 
 \end{align*}
 In particular, $\tilde{\mathcal D}=\mathcal D$ in the case $n=2$  because of
 $p_*(n)=\frac{2n}{n+1}=\frac{4}{3}, q_*(n)=\frac{2n}{n-1}=4$ and  $\tilde{\mathcal
 D}\subsetneq\mathcal D$ in the case $n\geq 3$ because of $p_*(n)<\frac{2n}{n+1},q_*(n)>\frac{2n}{n-1}$.
 Our main result is the following.

 \begin{thm} \label{thm:1}
   Let $n\in\N,n\geq 2$, let $V$ be given by~\eqref{eq:def_V} and $\lambda>V_1>V_2$. Then for all
   $(p,q)\in \tilde{\mathcal D}$ the resolvent estimate
   $$
     \sup_{0<|\eps|\leq 1} \|\mathcal R(\lambda+\i\eps)\|_{L^p(\R^n)\to L^q(\R^n)} <\infty
   $$
   holds. Moreover, the resolvent operators $\mathcal R(\lambda+\i\eps)$ converge  to
   nontrivial operators $\mathcal R(\lambda\pm \i 0)$ as $\pm\eps\searrow 0$  in the weak topology of bounded
   linear operators from $L^p(\R^n)$ to $L^q(\R^n)$. If the Restriction Conjecture is true, then
   the same holds for exponents $(p,q)\in\mathcal D$. 
 \end{thm}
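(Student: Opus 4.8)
The plan is to reduce the step-potential resolvent to explicit operators built from Fourier multipliers and then to bound those by Fourier restriction theory.

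\textbf{Reduction to a one-dimensional problem.} Since $V$ depends only on the last coordinate $y$, a partial Fourier transform in $x\in\R^{n-1}$ turns $(-\Delta+V-\lambda-\i\eps)u=f$ into the family of one-dimensional transmission problems
\[
  -\partial_y^2\widehat u(\xi,\cdot)+\bigl(|\xi|^2+V_j-\lambda-\i\eps\bigr)\widehat u(\xi,\cdot)=\widehat f(\xi,\cdot)\quad\text{on }\pm y>0,\ j=1,2,
\]
coupled by continuity of $\widehat u$ and $\partial_y\widehat u$ at $y=0$. I would solve these ODEs explicitly: with the branch of $\sqrt{|\xi|^2+V_j-\lambda-\i\eps}$ having positive real part (so that the solutions decay as $y\to\pm\infty$; unambiguous for $\eps\neq0$), write down the $2\times2$ Green's function and reassemble in $x$. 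This produces a closed formula for $\mathcal R(\lambda+\i\eps)$ — the structure visible in~\eqref{eq:formula_u+} — as a finite sum of operators applied to the split data $f\,1_{\{y>0\}}$ and $f\,1_{\{y<0\}}$. The summands are of two kinds: (i) \emph{quasi-free} terms which, up to bounded factors, coincide with the constant-potential resolvents $(-\Delta+V_j-\lambda-\i\eps)^{-1}$ precomposed with truncation to a half-space, and (ii) \emph{transmission/reflection} terms built from the factors $z_j^{-1/2}$ and $(\sqrt{z_1}+\sqrt{z_2})^{-1}$, $z_j=|\xi|^2+V_j-\lambda-\i\eps$, whose singular behavior sits on the two spheres $|\xi|^2=\lambda-V_j$ bounding the annulus $\lambda-V_1\le|\xi|^2\le\lambda-V_2$ — the mixed regime, propagating on one side of the interface and evanescent on the other.

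\textbf{Bounding the two kinds of terms.} The quasi-free terms are bounded from $L^p(\R^n)$ to $L^q(\R^n)$ uniformly in $0<|\eps|\le1$ for $(p,q)\in\tilde{\mathcal D}$ (resp. $\mathcal D$) directly from Theorem~\ref{thm:Gutierrez} together with Tao's restriction estimate Theorem~\ref{thm:Tao} (resp. the Restriction Conjecture), since truncation to a half-space is bounded on every $L^r(\R^n)$. The transmission terms are the genuinely new point: they are controlled by a singular Fourier multiplier whose decisive part is supported in the above annulus — the annulus multiplier announced in the abstract — and the core of the argument is to establish $L^p\to L^q$ bounds for it, uniform in $\eps$, on the range $\tilde{\mathcal D}$ (resp. $\mathcal D$). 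The idea is to localize near the two bounding spheres, where the symbol carries a square-root (half-order) singularity of the form $(|\xi|^2-(\lambda-V_j)\mp\i0)^{\pm1/2}$ times a smooth bounded amplitude, and to treat the region away from those spheres — where the multiplier is a smooth bump, compactly supported in the radial variable — by classical, elementary $L^p\to L^q$ estimates. The near-sphere pieces are exactly at the level of Fourier extension estimates, and I would obtain their $L^p\to L^q$ bounds from Stein--Tomas and its bilinear (Tao-type) refinement, as in the proof of Theorem~\ref{thm:Gutierrez}, only with an extra half-power to spare. Tracking the $\eps$-dependence so that the pointwise limit of every symbol as $\pm\eps\searrow0$ carries the correct $\mp\i0$ prescription is a careful but routine stationary-phase and dominated-convergence matter.

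\textbf{Passing to the limit.} With uniform operator bounds on $\tilde{\mathcal D}$ (resp. $\mathcal D$) in hand, I would deduce $\mathcal R(\lambda+\i\eps)\to\mathcal R(\lambda\pm\i0)$ in the weak operator topology by testing against Schwartz functions — dense in both $L^p$ and $L^{q'}$ — using pointwise-a.e. convergence of all symbols and dominated convergence. Nontriviality of the limit operators is seen on an explicit pair $f,g$ (e.g. Gaussians, or functions whose Fourier transforms do not vanish on the relevant sphere): the imaginary part of the limiting quasi-free multiplier carries a nonzero surface-measure contribution on $\{|\xi|^2=\lambda-V_j\}$, hence $\langle\mathcal R(\lambda\pm\i0)f,g\rangle\neq0$. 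The statement under the Restriction Conjecture then follows verbatim by substituting the conjectured extension inequality for Theorem~\ref{thm:Tao} throughout, which replaces $\tilde{\mathcal D}$ by $\mathcal D$. I expect the main obstacle to be precisely the $L^p\to L^q$ analysis of the annulus multiplier: its singular set is a full annulus rather than a single sphere, the two bounding spheres interact through the coefficient $(\sqrt{z_1}+\sqrt{z_2})^{-1}$, and the $\eps$-dependence must be controlled finely enough that the $\i0$-limit exists as a bounded operator, not merely as a quadratic form.
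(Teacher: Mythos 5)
Your overall architecture matches the paper's: partial Fourier transform in $x$ plus an explicit $2\times2$ transmission ODE at $y=0$, leading to a representation formula (the paper's Proposition~\ref{prop:ResolventFormulanD}), then separate treatment of the quasi-free summand (Theorem~\ref{thm:Gutierrez}, precomposed with half-space truncation) and the transmission summands, followed by a weak-operator limit as $\eps\searrow0$ by dominated convergence and density of Schwartz functions. That much is right and is the route the paper takes.

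The genuine gap is in your treatment of the transmission terms, which you try to dispose of by localizing near the two spheres $\{|\xi|=\mu_j\}$ and invoking Stein--Tomas/Tao ``with an extra half-power to spare.'' This does not work, for two reasons. First, the sign of that half power is against you, not for you: the symbol near $|\xi|=\mu_j$ behaves like $(|\xi|^2-\mu_j^2)^{-1/2}$, i.e.\ a Bochner--Riesz multiplier of \emph{negative} order $-\tfrac12$, which is strictly harder than the Fourier extension operator, not easier. The operator $T_{\lambda,\alpha}$ with $\alpha=\tfrac12$ sits at the borderline of what is known about negative-order Bochner--Riesz, and establishing $L^p$--$L^q$ bounds for it on the asserted range requires the complex-interpolation machinery of Theorem~\ref{thm:Tlambdaalpha} (Stein interpolation between the $L^2$-type bound $T_{\lambda,0}=S_\lambda^*S_\lambda$ and the restriction estimate $\mathcal T_{\lambda,1}$), not merely a dyadic localization plus Stein--Tomas. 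Second, and equally essential, the uniformity you actually need is not in $\eps$ — that is indeed routine dominated convergence — but in the auxiliary parameter $\lambda\sim|y|+|z|$ that appears through the evanescent factors $\e{-(|y|+|z|)\sqrt{|\xi|^2-\mu_j^2}}$. After the one-dimensional Green's function is unfolded, the transmission contribution is a $z$-integral of annulus multipliers applied to the slices $f(\cdot,z)$, and one must prove $\|T_{\lambda,\alpha}\|_{L^p\to L^q}\les(1+\lambda)^\gamma$ with $\gamma$ strictly better than $-1+\tfrac1p-\tfrac1q$ in the endpoint cases so that Young's (weak) convolution inequality closes the $z$-integration. Your sketch does not produce such a quantitative rate and gives no mechanism for obtaining it.

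A second, smaller, omission: the frequency decomposition has to be finer than ``near the spheres / away from the spheres.'' The paper splits into four regimes: small frequencies $|\xi|\le\mu_1$ (both channels propagative — pure restriction theory), the interference annulus $\mu_1<|\xi|\le\mu_2$ (one channel propagative, one evanescent — restriction plus the $S_\lambda$ estimates of Theorem~\ref{thm:Slambdaalpha}), the intermediate annulus $\mu_j<|\xi|\le\mu_1+\mu_2$ (both evanescent but still singular — the annulus multiplier Theorem~\ref{thm:Tlambdaalpha}), and large frequencies $|\xi|>\mu_1+\mu_2$ (elementary Hausdorff--Young plus Schur). Collapsing these regimes obscures exactly where each harmonic-analysis tool is used and why the half-order singularity is the sticking point.
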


 In particular, our resolvent estimates for $n=2$ coincide with the corresponding estimates for the constant
 potential whereas the ones for $n\geq 3$ cover a smaller range of parameters. For the important class of
 selfdual exponents $p=q'$, however, our Theorem~\ref{thm:1} is optimal since it gives
 $6\leq q<\infty$ for $n=2$ and $\frac{2(n+1)}{n-1}\leq q\leq \frac{2n}{n-2}$ for $n\geq 3$.  
 Let us mention that our result only covers
 frequencies in the range $\lambda>V_1>V_2$ and thus not all frequencies in the (essential) spectrum. We
 believe that   the same estimates can be
 proved for the remaining frequencies $\lambda\in (V_2,V_1]$ in the spectrum with some technical
 work.
 Especially regarding the treatment of Schr\"odinger or wave equations, uniform estimates with respect to all $\lambda\in\C$ would be very helpful and remain a challenging task for the future.

  \medskip

   As an application of Theorem~\ref{thm:1} we consider Helmholtz Equations on $\R^n$
   involving the potential $V$ from~\eqref{eq:def_V}. We start with linear problems of the form
  \begin{equation}\label{eq:nDNLH}
    - \Delta u + V u - \lambda u = f \quad\text{in }\R^n
  \end{equation}
  where $f\in L^p(\R^n)$. Theorem~\ref{thm:1} allows, for $(p, q)\in\tilde{\mathcal D}$,
  to define the outgoing  solution $u_+:=\mathcal R(\lambda+\i 0)f\in L^q(\R^n)$ of this equation. Notice
  that in the context of Helmholtz equations the word ``outgoing'' is used to distinguish $u_+=\mathcal R(\lambda+\i 0)f$
  from the corresponding ``incoming'' solution $u_-:=\mathcal R(\lambda-\i 0)f= \ov{u_+}$,
  see~\cite[Definition~6.5]{AgmonHoer}.
  Combining this with local elliptic regularity theory we obtain the following result.

 \begin{cor} \label{cor:1}
    Let $n\in\N,n\geq 2$, $(p,q)\in\tilde{\mathcal D}$, let $V$ be given by~\eqref{eq:def_V} and
    $\lambda>V_1>V_2$. Then for any $f\in L^p(\R^n)$ the
    Helmholtz equation~\eqref{eq:nDNLH} has a nontrivial ``outgoing'' resp. ``incoming'' strong solution
    $u_+\,(\text{resp. }u_-)\,\in L^q(\R^n)\cap W^{2,p}_{\loc}(\R^n)$ obtained by the Limiting Absorption
    Principle, and there holds an estimate of the form
   $$
     \|u_-\|_{L^q(\R^n)}  + \|u_+\|_{L^q(\R^n)}   \les \|f\|_{L^p(\R^n)}.
   $$
   If the Restriction Conjecture is true, then the same holds for $(p,q)\in\mathcal D$.
 \end{cor}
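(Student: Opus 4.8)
The plan is to derive the corollary from Theorem~\ref{thm:1} by combining the limiting absorption bound with interior elliptic regularity. First I would fix $(p,q)\in\tilde{\mathcal D}$ and $f\in L^p(\R^n)$, note that $\tfrac1p>\tfrac{1}{p_*(n)}>0$ forces $p<\infty$, and set $u_\pm:=\mathcal R(\lambda\pm\i 0)f\in L^q(\R^n)$, the weak (weak-$*$ if $q=\infty$) limit supplied by Theorem~\ref{thm:1}. Since the bounds $\|\mathcal R(\lambda\pm\i\eps)f\|_{L^q}\les\|f\|_{L^p}$ are uniform in $\eps$ and pass to weak limits, this immediately yields $\|u_+\|_{L^q}+\|u_-\|_{L^q}\les\|f\|_{L^p}$. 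The treatment of $u_+$ and $u_-$ being identical (replace $\i\eps$ by $-\i\eps$), I would from here on focus on $u_+$.

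Next I would show that $u_+$ solves~\eqref{eq:nDNLH} in the sense of distributions. For $f\in L^2\cap L^p$ and $0<\eps\le 1$, the function $u_\eps:=\mathcal R(\lambda+\i\eps)f$ lies in $H^2(\R^n)$ and solves $-\Delta u_\eps+(V-\lambda-\i\eps)u_\eps=f$ in the $L^2$-sense; approximating a general $f\in L^p$ by such functions and invoking the uniform $L^p\to L^q$ bound, this identity persists in $\mathcal D'(\R^n)$ for all $f\in L^p(\R^n)$. Testing against an arbitrary $\varphi\in C_c^\infty(\R^n)$ and letting $\eps\searrow 0$, I would use that $V$ is bounded and $\varphi$ compactly supported, so $-\Delta\varphi$ and $(V-\lambda)\varphi$ belong to $L^{q'}(\R^n)$; the weak convergence $u_\eps\wto u_+$ then takes care of the Laplacian and potential terms, while $\i\eps\int_{\R^n}u_\eps\varphi\to 0$ because $\sup_{0<\eps\le 1}\|u_\eps\|_{L^q}<\infty$. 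This yields $-\Delta u_++Vu_+-\lambda u_+=f$ in $\mathcal D'(\R^n)$.

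For the regularity I would rewrite the equation as $-\Delta u_+=f-(V-\lambda)u_+$ and observe that the right-hand side lies in $L^p_{\loc}(\R^n)$: indeed $f\in L^p_{\loc}$, and $(V-\lambda)u_+\in L^q_{\loc}\subseteq L^p_{\loc}$ because $(p,q)\in\tilde{\mathcal D}$ satisfies $\tfrac1p-\tfrac1q\ge\tfrac2{n+1}>0$, hence $p\le q$ and $L^q(\Omega)\hookrightarrow L^p(\Omega)$ on every bounded set $\Omega$. Since also $u_+\in L^p_{\loc}$, interior Calder\'on--Zygmund estimates for the Laplacian give $u_+\in W^{2,p}_{\loc}(\R^n)$, whence the distributional identity holds pointwise a.e. and $u_+$ is a strong solution in $L^q(\R^n)\cap W^{2,p}_{\loc}(\R^n)$. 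Nontriviality of the solution operator $f\mapsto u_\pm$ is inherited from the nontriviality of $\mathcal R(\lambda\pm\i 0)$ asserted in Theorem~\ref{thm:1}. Finally, if the Restriction Conjecture holds, Theorem~\ref{thm:1} supplies the uniform bound and the weak convergence for every $(p,q)\in\mathcal D$, and since $\tfrac1p-\tfrac1q\ge\tfrac2{n+1}>0$ continues to hold there, the same three steps give the corollary for $(p,q)\in\mathcal D$.

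The only genuinely delicate point is the density/limiting argument of the second step — defining $\mathcal R(\lambda+\i\eps)f$ for $f\notin L^2$ through the uniform $L^p$--$L^q$ estimate and transferring the PDE to the weak limit; the rest is a routine bootstrap, and the hypothesis $\tfrac1p-\tfrac1q\ge\tfrac2{n+1}$ (so that $p\le q$) is used precisely to ensure $L^q_{\loc}\subseteq L^p_{\loc}$.
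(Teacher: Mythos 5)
Your proof is correct and follows exactly the route the paper intends: the paper does not spell out an argument for Corollary~\ref{cor:1} beyond the remark that it follows from Theorem~\ref{thm:1} ``combined with local elliptic regularity theory,'' and you have filled in precisely those details (passage of the perturbed equation to the weak limit, inclusion $L^q_{\loc}\subseteq L^p_{\loc}$ from $p\le q$, interior Calder\'on--Zygmund estimates). No gaps.
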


 Here the symbol $\les$ is used in the sense that there exists some constant $C>0$ depending only on
 the parameters $V_1, V_2, n, p, q,\lambda$ such that $\|u_-\|_{L^q(\R^n)}  + \|u_+\|_{L^q(\R^n)}   \leq C
 \|f\|_{L^p(\R^n)}$. The description of the appropriate radiation conditions
  for ``outgoing'' resp. ``incoming'' solutions remains open and we believe that, here, the results for the
  ranges $\lambda \in (V_2,V_1),\lambda=V_1$ and $\lambda\in (V_1,\infty)$ will be different. Moreover, it
  would be nice to provide a reasonable definition of a Herglotz wave. Recall that in
  the case $V_1=V_2=1$, Herglotz waves are given by $x\mapsto \int_{|\xi|=1} g(\xi)\e{-\i x\cdot\xi}
  \,\mathrm{d}\sigma(\xi)$ for square integrable densities $g$ on the sphere. These
  solutions to the  Helmholtz equation~\eqref{eq:nDNLH} for $f=0$  are of central interest in
  scattering theory.

  \medskip

  In our final result we use the Limiting Absorption Principle from Theorem~\ref{thm:1} to
  prove the existence of solutions to nonlinear Helmholtz equations following the dual variational approach
  developed by Ev\'{e}quoz and Weth~\cite[Theorem~1.2]{EvWe}. We refer to~\cite{Man_LAPperiodic,Man_Uncountably,ManSchei_Dual,EveqWeth_Real,EveqWeth_Branch} for related
  results and other approaches to  nonlinear Helmholtz equations with constant or periodic potentials.

 \begin{cor}\label{cor:2}
   Let $n\in\N,n\geq 2$, let $V$ be given by~\eqref{eq:def_V} and assume $\lambda>V_1>V_2$. Let $\Gamma\in
   L^\infty(\R^n)$ satisfy $\Gamma>0$ on $\R^n$ and $\Gamma(x,y)\to 0$ as
   $|(x,y)|\to\infty$. Then the nonlinear Helmholtz equation
   \begin{align} \label{eq:NLH}
     - \Delta u + V u - \lambda u =   \Gamma  |u|^{q-2}u \quad\text{in }\R^n
   \end{align}
   has a  nontrivial solution in $L^q(\R^n)\cap W^{2,r}_{\loc}(\R^n)$ for all  $r<\infty$  provided
   that $\frac{2(n+1)}{n-1}\leq q< \frac{2n}{n-2}$.
 \end{cor}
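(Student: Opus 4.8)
The plan is to adapt the dual variational method of Ev\'{e}quoz and Weth~\cite{EvWe}. First note that for $q$ in the stated range the selfdual pair $(q',q)$ lies in $\tilde{\mathcal D}$: the two conditions $\frac1{q'}>\frac1{p_*(n)}$ and $\frac1q<\frac1{q_*(n)}$ both reduce to $q>q_*(n)=\frac{2(n+2)}{n}$, which holds because $\frac{2(n+1)}{n-1}>\frac{2(n+2)}{n}$, while $\frac{2}{n+1}\le\frac1{q'}-\frac1q=1-\frac2q\le\frac2n$ is exactly $\frac{2(n+1)}{n-1}\le q\le\frac{2n}{n-2}$. Hence Theorem~\ref{thm:1} applies to $\mathcal R:=\tfrac12\bigl(\mathcal R(\lambda+\i0)+\mathcal R(\lambda-\i0)\bigr)$, a bounded, real and symmetric operator $\mathcal R\colon L^{q'}(\R^n)\to L^q(\R^n)$ (symmetric in the sense $\int(\mathcal Rf)\,g=\int(\mathcal Rg)\,f$) which, being the weak limit of the resolvents $\mathcal R(\lambda+\i\eps)$, inverts $-\Delta+V-\lambda$ in the sense of distributions. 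Since $0<\Gamma\in L^\infty(\R^n)$, multiplication by $\Gamma^{1/q}$ is bounded on every Lebesgue space, so $\mathcal Sv:=\Gamma^{1/q}\,\mathcal R(\Gamma^{1/q}v)$ defines a bounded symmetric operator $\mathcal S\colon L^{q'}(\R^n)\to L^q(\R^n)$. Using $(q-1)(q'-1)=1$ one checks that a solution $v\in L^{q'}(\R^n)$ of the dual equation $|v|^{q'-2}v=\mathcal Sv$ yields, via $u:=\mathcal R(\Gamma^{1/q}v)=\Gamma^{-1/q}|v|^{q'-2}v$, a function with $\Gamma|u|^{q-2}u=\Gamma^{1/q}v$, hence $u=\mathcal R(\Gamma|u|^{q-2}u)$, i.e.\ $-\Delta u+Vu-\lambda u=\Gamma|u|^{q-2}u$ distributionally on $\R^n$; moreover $v\neq0$ forces $u\neq0$ since $\Gamma>0$.

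The dual equation is the Euler--Lagrange equation of the $C^1$ functional $J\colon L^{q'}(\R^n)\to\R$, $J(v)=\frac1{q'}\norm{v}_{L^{q'}}^{q'}-\frac12\langle\mathcal Sv,v\rangle$, and I would obtain a nontrivial critical point by the Mountain Pass Theorem (equivalently, by minimisation over the dual Nehari manifold). Because $q'<2$ the sublinear term dominates near the origin, giving $J(v)\ge\frac1{q'}\norm{v}_{L^{q'}}^{q'}-\frac12\norm{\mathcal S}\,\norm{v}_{L^{q'}}^2\ge\alpha>0$ on a small sphere $\norm{v}_{L^{q'}}=\rho$. Since $\mathcal R$, and therefore $\mathcal S$ (as $\Gamma^{1/q}$ is injective), is not negative semidefinite --- which is clear on the Fourier side in the free case $V=0$ and persists for the step potential by a high-frequency argument --- there is $w$ with $\langle\mathcal Sw,w\rangle>0$, whence $J(tw)=\frac{t^{q'}}{q'}\norm{w}_{L^{q'}}^{q'}-\frac{t^2}{2}\langle\mathcal Sw,w\rangle\to-\infty$ as $t\to\infty$. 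This yields the mountain pass geometry with level $c\ge\alpha>0$.

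The crux is the Palais--Smale condition, and this is where the decay $\Gamma(x,y)\to0$ as $|(x,y)|\to\infty$ is essential. The key point is that $\mathcal S\colon L^{q'}(\R^n)\to L^q(\R^n)$ is \emph{compact}: if $v_k$ is bounded in $L^{q'}$, then $h_k:=\mathcal R(\Gamma^{1/q}v_k)$ is bounded in $L^q(\R^n)$ by Theorem~\ref{thm:1}, and from $-\Delta h_k=\Gamma^{1/q}v_k+(\lambda-V)h_k$, which is bounded in $L^{q'}_{\loc}(\R^n)$, interior elliptic estimates give a bound in $W^{2,q'}_{\loc}(\R^n)$; since $q<\frac{2n}{n-2}$ equals the Sobolev exponent $\bigl(\frac1{q'}-\frac2n\bigr)^{-1}$ of $W^{2,q'}$, Rellich's theorem yields $h_k\to h$ in $L^q_{\loc}(\R^n)$ along a subsequence, and then $\mathcal Sv_k=\Gamma^{1/q}h_k$ converges in $L^q(\R^n)$ by splitting the integral over a large ball and its complement and using $\norm{\Gamma^{1/q}}_{L^\infty(\R^n\sm B_R)}\to0$. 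Now for a Palais--Smale sequence $v_k$, the identity $J(v_k)-\tfrac12 J'(v_k)v_k=(\tfrac1{q'}-\tfrac12)\norm{v_k}_{L^{q'}}^{q'}$ with $\tfrac1{q'}-\tfrac12>0$ bounds $(v_k)$ in $L^{q'}$; passing to $v_k\wto v$, compactness gives $\mathcal Sv_k\to\mathcal Sv$ in $L^q$, so $|v_k|^{q'-2}v_k=\mathcal Sv_k+o(1)\to\mathcal Sv$ in $L^q$, and the continuous inverse superposition operator $g\mapsto|g|^{q-2}g$ gives $v_k\to|\mathcal Sv|^{q-2}\mathcal Sv$ strongly in $L^{q'}$; by uniqueness of weak limits this limit is $v$ and $|v|^{q'-2}v=\mathcal Sv$. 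Thus $J$ has a critical point $v\neq0$ at level $c>0$, and $u:=\mathcal R(\Gamma^{1/q}v)\in L^q(\R^n)$ is a nontrivial distributional solution of~\eqref{eq:NLH}.

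It remains to bootstrap the regularity. From $u\in L^q_{\loc}(\R^n)$ the right-hand side of $-\Delta u=\Gamma|u|^{q-2}u+(\lambda-V)u$ lies in $L^{q/(q-1)}_{\loc}(\R^n)$, so $u\in W^{2,q/(q-1)}_{\loc}\hookrightarrow L^{1/t_1}_{\loc}$ with $t_1=\frac{q-1}{q}-\frac2n$; since $q<\frac{2n}{n-2}$ is equivalent to $\frac1q<\frac{2}{n(q-2)}$, the induced iteration $t_{k+1}=(q-1)t_k-\frac2n$ (started at $t_0=\frac1q$) strictly decreases and after finitely many steps reaches the regime in which $W^{2,\cdot}_{\loc}$ embeds into $L^\infty_{\loc}(\R^n)$; then $-\Delta u\in L^r_{\loc}(\R^n)$ for every $r<\infty$ and elliptic regularity gives $u\in W^{2,r}_{\loc}(\R^n)$ for all $r<\infty$ (for $n=2$ a single step already yields $u\in W^{2,q'}_{\loc}\hookrightarrow C_{\loc}(\R^2)$). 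The main obstacle is precisely the verification of the Palais--Smale condition: it is the spatial decay of $\Gamma$ that restores the compactness otherwise destroyed by translation invariance, and this mechanism forces the restriction $q<\frac{2n}{n-2}$ via the Rellich embedding, just as the lower bound $q\ge\frac{2(n+1)}{n-1}$ is dictated by the range of validity of the Limiting Absorption Principle in Theorem~\ref{thm:1}.
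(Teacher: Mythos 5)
Your proposal follows the same dual variational strategy as the paper (operator $K=\Real\mathcal R(\lambda+\i 0)$, dual functional of Mountain Pass type, Palais--Smale from the decay of $\Gamma$), and your verification that the selfdual pair $(q',q)$ lies in $\tilde{\mathcal D}$ and that $q<\frac{2n}{n-2}$ fuels the Rellich compactness is correct. The elaboration of the Palais--Smale argument via compactness of $\mathcal S$ is more explicit than the paper's citation of \cite[Lemma~5.2]{EvWe}, and it is sound.

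The one genuine gap is the step where you need a function $w$ with $\skp{\mathcal S w}{w}>0$ so that $J(tw)\to-\infty$. You assert that non-negative-semidefiniteness of $\mathcal R$ is ``clear on the Fourier side in the free case $V=0$ and persists for the step potential by a high-frequency argument,'' but you never substantiate the second clause. For the step potential $\mathcal R(\lambda\pm\i 0)$ is \emph{not} a Fourier multiplier: the representation formula~\eqref{eq:formula_u+} contains, besides the two Guti\'errez-type convolution terms, the boundary interaction terms $\cF_{n-1}^{-1}\big(\e{\i|y|\nu_j}(m_jg_\pm)\big)$ whose sign is a priori unclear. The paper proves positivity by an explicit choice: $f(x,y)=w(x)\e{-y}1_{(0,\infty)}(y)$ with $\supp\hat w\subset\{|\xi|>\mu_2\}$, which kills $g_-$, makes $m_3\equiv 0$, renders $m_1(\xi)$ real and positive on $\{|\xi|\geq\mu_2,y>0\}$, and makes the $-\i 0$ regularisation harmless, after which $\int f(Kf)$ is a sum of two manifestly positive integrals. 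It is the one-sided exponential profile in $y$ together with the support condition on $\hat w$ that makes this work; a generic ``high-frequency'' bump would not automatically control the sign of the cross terms $m_1 g_+$, $m_4 g_-$. You should either reproduce a computation of this type or give a compactness/perturbation argument that genuinely reduces to the free case; as stated, this step is an assertion rather than a proof.

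Two minor remarks: the operator $\tfrac12(\mathcal R(\lambda+\i 0)+\mathcal R(\lambda-\i 0))$ you use does agree with the paper's $K=\Real\mathcal R(\lambda+\i 0)$ on real inputs because $\mathcal R(\lambda-\i 0)=\ov{\mathcal R(\lambda+\i 0)(\ov{\cdot})}$, so this is only a change of notation; and the sentence ``$q<\frac{2n}{n-2}$ equals the Sobolev exponent $(\frac1{q'}-\frac2n)^{-1}$'' is phrased incorrectly (the two quantities coincide only for $q'=2$), though the inequality $q<(\frac1{q'}-\frac2n)^{-1}$, which is what you actually need for compact embedding, is indeed equivalent to $q<\frac{2n}{n-2}$ for $p=q'$.
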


 We stress that this result covers the physically relevant special
 cases of the cubic and quintic nonlinearities for $n=3$. More refined dual variational techniques as
 in~\cite{Eveq_Orlicz,Eveq_periodic,EvWe} might be applicable as well to get
 one or even infinitely many solutions for larger classes of nonlinearities.
 For the proof of Corollary~\ref{cor:2} we concentrate on an adaptation of \cite[Theorem~1.2]{EvWe} in order
 to keep the technicalities at a moderate level. Let us mention that the  integrability properties  of the
 solution at infinity are actually slightly better, which can be proved along the lines of
 \cite[Theorem~4.4]{EvWe} with the aid of a bootstrap procedure.

\medskip
  
  In the proof of Theorem~\ref{thm:1} we will use Fourier restriction theory for estimates related to small
  frequencies, whereas our estimates for intermediate frequency ranges require different tools from Harmonic
  Analysis that we believe to be  interesting in themselves. For instance, we encounter linear operators of
  the form
  \begin{equation} \label{eq:def_Tlambdaalpha}
   T_{\lambda,\alpha} h
    :=    \cF_d^{-1}\left( 1_A(\cdot) \e{-\lambda\sqrt{|\cdot|^2-a^2}}
    (|\cdot|^2-a^2)^{-\alpha} m(|\cdot|) \cF_d h(\cdot) \right)
  \end{equation}
  where $\alpha\in \{0,\frac{1}{2}\}$, $m\in C([a,b])$, $\lambda\geq 0$ and $A=\{\xi\in\R^d: a\leq
  |\xi|\leq b\}$ is an annulus with radii $b>a>0$.  The dimensional parameter will be $d=n-1$.
  If $m$ is
  sufficiently smooth and $\lambda=0$, we expect such operators to behave like so-called Bochner-Riesz operators of
  negative order -- a connection that we will highlight below. Their mapping properties are quite well  
  but, as far as we know, not completely understood, especially for $0\leq \alpha<\frac{1}{2}$. In our
  context, however, $m$ is only $\frac{1}{2}$-H\"older continuous and thus we
  cannot build upon on existing literature about these operators. We first present our result dealing with
  the one-dimensional case, which  will be used in the proof of our LAP in the case
  $n=d+1=2$. For completeness, we provide an optimal result under the stonger assumption $m\in
  C^1([a,b])$.
    
  \begin{thm}\label{thm:Tlambdaalpha_d=1}
    Let $\alpha\in [0,1)$, $0<a<b<\infty, \lambda\geq 0$ and $m\in C^1([a,b])$. Then
    $T_{\lambda,\alpha}:L^p(\R)\to L^q(\R)$ is bounded whenever $\frac{1}{p}-\frac{1}{q}\geq \alpha$, $\frac{1}{p}>\alpha$,
    $\frac{1}{q}<1-\alpha$. This range of exponents is optimal under the given conditions and 
    $$
      \|T_{\lambda,\alpha}h\|_{L^q(\R)} \les (1+\lambda)^{2\alpha-\frac{2}{p}+\frac{2}{q}} \|h\|_{L^p(\R)}.
    $$
    For $m\in C([a,b])$ this estimate holds whenever $1\leq p\leq 2\leq q\leq \infty$,
    $\frac{1}{p}-\frac{1}{q}\geq \alpha$.
  \end{thm}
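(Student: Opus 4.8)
The plan is to localize the multiplier and isolate the single genuinely singular contribution. Fix a smooth partition of unity on the support $[-b,-a]\cup[a,b]$ of $1_A$ and write $M=M_a+M_b+M_{\mathrm{int}}$, the three summands being supported near $\pm a$, near $\pm b$, and in $\{a+\delta\le|\xi|\le b-\delta\}$ respectively; since $1_A$, $m(|\cdot|)$ and hence $M$ itself are even and real, the reflection $\xi\mapsto-\xi$ reduces everything to the part near $+a$, the part near $+b$, and the interior part. On the supports of $M_b$ and $M_{\mathrm{int}}$ one has $\sqrt{|\xi|^2-a^2}\ge c_\delta>0$, so $\e{-\lambda\sqrt{|\xi|^2-a^2}}$ — and its first derivative, which costs only a factor $1+\lambda$ — is $O(\e{-c_\delta\lambda})$; the residual multiplier is a compactly supported function of bounded variation (the jump of $1_A$ at $|\xi|=b$) respectively of class $C^1$, hence its convolution kernel is $\lesssim\e{-c_\delta\lambda/2}(1+|x|)^{-1}$. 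Operators of that form are bounded $L^p\to L^q$ for all $1\le p\le q\le\infty$ with norm decaying faster than any power of $1+\lambda$, so $M_b$ and $M_{\mathrm{int}}$ are harmless and absorbed by the asserted estimate. Everything therefore comes down to $M_a$.

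Near $\xi=a$ write $M_a(\xi)=1_{\{\xi\ge a\}}(\xi-a)^{-\alpha}\phi_\lambda(\xi)$, where $\phi_\lambda(\xi)=\psi_0(\xi)(\xi+a)^{-\alpha}m(\xi)\,\e{-\lambda\sqrt{(\xi-a)(\xi+a)}}$ and $\psi_0$ is the cutoff defining $M_a$ (so $\psi_0\equiv1$ near $a$); $\phi_\lambda$ is bounded, compactly supported, $C^1$ away from $a$ and has only a $\sqrt{\xi-a}$-type non-smoothness at $a$. The key point is the pointwise kernel bound
\[
  \bigl|\mathcal{F}^{-1}M_a(x)\bigr|\;\lesssim\;\min\bigl\{(1+\lambda)^{2\alpha-2},\,(1+|x|)^{\alpha-1}\bigr\}.
\]
To prove it I would recentre at $a$ and rescale $\xi-a=(1+\lambda)^{-2}s$, which normalizes the exponential, produces the prefactor $(1+\lambda)^{2\alpha-2}$, and rewrites the kernel as $(1+\lambda)^{2\alpha-2}\,\e{\i a x}\int_0^\infty s^{-\alpha}\Psi_\lambda(s)\,\e{\i s x(1+\lambda)^{-2}}\,ds$ with $\Psi_\lambda(s)=\phi_\lambda\bigl(a+(1+\lambda)^{-2}s\bigr)$. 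The family $\Psi_\lambda$ is, uniformly for $\lambda\ge1$, dominated by $\e{-c\sqrt s}$ together with its derivatives and converges to $c_0\,\e{-c\sqrt s}$ (with $c_0=(2a)^{-\alpha}m(a)$, $c=\sqrt{2a}$); for such an integrand the integral is bounded by $\int_0^\infty s^{-\alpha}\e{-c\sqrt s}\,ds\lesssim1$, and, via the classical Fourier transform of the homogeneous distribution $s_+^{-\alpha}$ (of degree $\alpha-1$) plus the rapid decay coming from the $\e{-c\sqrt s}$ tail, also by $C\bigl(1+|x|(1+\lambda)^{-2}\bigr)^{\alpha-1}$. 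Combining the two and undoing the scaling gives the displayed bound; for $\lambda\le1$ the second alternative is used directly. \textbf{This kernel estimate is the main obstacle}: one has to track the exact power of $1+\lambda$ generated by the interplay between the $\alpha$-singularity and the $\lambda$-modulated exponential — which concentrates on a band of width $(1+\lambda)^{-2}$ around $|\xi|=a$ — while tolerating the mild non-smoothness of $\e{-\lambda\sqrt{\xi^2-a^2}}$ at $\xi=a$.

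Granting the kernel bound, the mapping property is a matter of (weak) Young's inequality. A direct computation gives $\|\mathcal{F}^{-1}M_a\|_{L^r}\lesssim(1+\lambda)^{2\alpha-2+2/r}$ for every $r>\tfrac1{1-\alpha}$, i.e.\ whenever $\tfrac1p-\tfrac1q=1-\tfrac1r>\alpha$; Young's inequality with $\tfrac1r=1-(\tfrac1p-\tfrac1q)$ then yields $\|T_{\lambda,\alpha}h\|_{L^q}\lesssim(1+\lambda)^{2\alpha-\frac2p+\frac2q}\|h\|_{L^p}$ on the whole open region $\tfrac1p-\tfrac1q>\alpha$ (where $\tfrac1p>\alpha$ and $\tfrac1q<1-\alpha$ hold automatically). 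Those two extra conditions become binding only on the diagonal $\tfrac1p-\tfrac1q=\alpha$, where they say exactly $1<p\le q<\infty$; there $\bigl|\mathcal{F}^{-1}M_a(x)\bigr|\lesssim(1+|x|)^{\alpha-1}\in L^{1/(1-\alpha),\infty}$ uniformly in $\lambda$, so the Hardy--Littlewood--Sobolev inequality closes the estimate with the (trivial) power $(1+\lambda)^0$. Optimality of the range follows by testing: with $\widehat h=1_{[a,a+\delta]}$ one finds $\|T_{\lambda,\alpha}h\|_{L^q}/\|h\|_{L^p}\sim\delta^{\frac1p-\frac1q-\alpha}$, so $\tfrac1p-\tfrac1q\ge\alpha$ is necessary, and (for $m(a)\neq0$) $\bigl|\mathcal{F}^{-1}M_a(x)\bigr|\sim|x|^{\alpha-1}$ as $|x|\to\infty$, so $T_{\lambda,\alpha}$ is not bounded $L^1\to L^q$ once $\tfrac1q\ge1-\alpha$, and dually $L^p\to L^\infty$ fails once $\tfrac1p\le\alpha$. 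Finally, when only $m\in C([a,b])$ the kernel is no longer controlled, but for $1\le p\le2\le q\le\infty$ two applications of the Hausdorff--Young inequality together with Hölder's inequality bound $T_{\lambda,\alpha}$ by $\|M\|_{L^u}$ with $\tfrac1u=\tfrac1p-\tfrac1q$, and $\|M\|_{L^u}\lesssim(1+\lambda)^{2\alpha-2/u}$ holds whenever $\tfrac1u>\alpha$, the borderline case being recovered by a Lorentz-space refinement using the weak bound $M\in L^{1/\alpha,\infty}$.
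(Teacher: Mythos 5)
Your proposal rests on the same central estimate as the paper's proof, namely the convolution-kernel bound $|K_\lambda(z)|\lesssim\min\{(1+\lambda)^{2\alpha-2},|z|^{\alpha-1}\}$, combined with (weak) Young's inequality, and the $m\in C$ case via Hausdorff--Young plus H\"older. So the skeleton is the same, but several technical choices differ and are worth comparing.

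For the kernel bound, you first split the multiplier into a piece near $|\xi|=a$, a piece near $|\xi|=b$, and an interior piece, discard the last two by exponential decay of $e^{-\lambda\sqrt{|\xi|^2-a^2}}$, and then handle the $a$-piece by the rescaling $\xi-a=(1+\lambda)^{-2}s$, producing a single ``universal'' oscillatory integral with the $\lambda$-dependence displaced into the phase. The paper instead works with the whole kernel at once, splits at $|z|=1+\lambda^2$, and carries out the oscillatory-integral estimate directly via Proposition~\ref{prop:Stein}, tracking the $\lambda$-factors explicitly through the substitution $|\xi|=1+\rho$ and then $t=\lambda\sqrt\rho$. The two computations are essentially the same argument in different coordinates; your rescaling makes the exponent $2\alpha-2+\tfrac2r$ transparent, while the paper's direct version avoids having to justify the uniform domination of $\Psi_\lambda$ and its derivative (the $s^{-1/2}$ singularity of $\Psi_\lambda'$, coming from the $\tfrac12$-H\"older kink of $e^{-\lambda\sqrt{\xi^2-a^2}}$ at $\xi=a$, is integrable but needs a sentence). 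Your invocation of the Fourier transform of $s_+^{-\alpha}$ is exactly the content of Proposition~\ref{prop:asymptotics_delta>0} in the paper; you would still need a one-step integration by parts to make it rigorous, so nothing is really saved there. One small slip: the $M_b$ piece has a jump at $|\xi|=b$, so its kernel is only $O(e^{-c\lambda}(1+|x|)^{-1})$, which is \emph{not} bounded on $L^p\to L^q$ for \emph{all} $1\le p\le q\le\infty$ (it fails at $p=q=1$ and $p=q=\infty$); it is, however, harmless on the range actually claimed, so this is cosmetic.

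The counterexamples are where you genuinely diverge. For $\tfrac1p-\tfrac1q<\alpha$ you use the single-scale test $\widehat h=1_{[a,a+\delta]}$; the paper uses the one-parameter family $\mathcal F^{-1}\bigl(1_{[1,2]}(|\cdot|^2-1)^{-\beta}\bigr)$, which (after choosing $\beta$ in~\eqref{eq:choice_beta_d=1}) lands already in $L^p$ and produces a function that is provably not in $L^q$ via the lower bound in Proposition~\ref{prop:asymptotics_delta>0}~(ii); your scaling argument with $\delta\to0$ reaches the same conclusion but needs the side conditions $p>1$, $\tfrac1q<1-\alpha$ to make both norms finite. For the closed endpoints $p=\tfrac1\alpha,q=\infty$ and $p=1,q=\tfrac1{1-\alpha}$ the paper constructs an explicit family $f_k$ exhibiting a logarithmic blow-up, whereas you argue dually from $K\notin L^{1/(1-\alpha)}$; this works (approximate identities and Fatou), provided you interpret ``$|\mathcal F^{-1}M_a(x)|\sim|x|^{\alpha-1}$'' as an envelope statement, since the kernel oscillates with phase $\sim e^{\pm iax}$ and its modulus is only comparable to $|x|^{\alpha-1}$ on a set of positive density. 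Finally, for the $m\in C([a,b])$ endpoint $\tfrac1p-\tfrac1q=\alpha$ you appeal to a Lorentz refinement (Lorentz Hausdorff--Young plus O'Neil's inequality with $M\in L^{1/\alpha,\infty}$), which is a standard alternative to the paper's Stein complex interpolation; both close the interior of the segment $1<p\le2\le q<\infty$ and both are silent about the corners $p=1$ or $q=\infty$ on that line (which, consistently with the $C^1$ optimality, should indeed be excluded). All in all: same key estimate, equivalent kernel computation in different coordinates, different but valid optimality tests, and a different but standard endpoint device for $m\in C$.
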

   
  In the higher-dimensional case $d\geq 2$ the matter is more complicated. For our purposes it will be
  sufficient to prove such estimates for exponents $(p,q)$ belonging to the set
  \begin{equation}\label{eq:def_D}
    D_\alpha
    := \left\{ (p,q)\in [1,\infty]^2: \frac{1}{p}>\frac{1}{2}+\frac{\alpha}{2d},\;\,
   \frac{1}{q}<\frac{1}{2}-\frac{\alpha}{2d},\;\, \frac{1}{p}-\frac{1}{q}\geq \frac{2\alpha}{d+1}\right\}
   \quad (0<\alpha<1)
  \end{equation}  
  assuming that the symbol $m$ is continuous. In the case $\alpha=0$ we set $D_0:= [1,2]\times [2,\infty]$. 

\begin{thm} \label{thm:Tlambdaalpha}
   Let $d\in\N,d\geq 2$, $\alpha\in [0,1)$, $0<a<b<\infty, \lambda\geq 0$ and $m\in C([a,b])$. Then
   $T_{\lambda,\alpha}:L^p(\R^d)\to L^q(\R^d)$ is bounded for all $(p,q)\in D_\alpha$ and we have 
   $$
    \|T_{\lambda,\alpha} h\|_{L^q(\R^d)}
    \les (1+\lambda)^\gamma \|h\|_{L^p(\R^d)} \qquad\text{for all }\lambda\geq 0  
  $$  
  for some $\gamma=\gamma_{\alpha,p,q,d}\leq 0$. If additionally
  $\frac{1}{p}-\frac{1}{q}\geq \frac{2}{d+2}$ is assumed then
  \begin{align} \label{eq:gamma_conditions}
    \gamma  \leq 2\alpha-2+\frac{1}{p}-\frac{1}{q} \qquad\text{and}\qquad 
    \gamma  < 2\alpha-2+\frac{1}{p}-\frac{1}{q} \quad\text{if } (\, p= 1 \text{ or } q=\infty\,).
  \end{align} 
\end{thm}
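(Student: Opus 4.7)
The natural plan is to dyadically decompose the symbol according to the distance from the inner sphere $\{|\xi|=a\}$, where the singular factor $(|\xi|^2-a^2)^{-\alpha}$ concentrates. With a smooth dyadic partition of unity $1=\sum_{k\geq 0}\psi_k(|\xi|^2-a^2)$ where $\psi_k$ is supported in $[2^{-k-1},2^{-k+1}]$, write $T_{\lambda,\alpha}=\sum_{k\geq 0}T_k$, where the multiplier of $T_k$ is supported in the shell $A_k:=A\cap\{2^{-k-1}\leq |\xi|^2-a^2\leq 2^{-k+1}\}$. On $A_k$ the singular factor is comparable to $2^{k\alpha}$, the exponential damping satisfies $e^{-\lambda\sqrt{|\xi|^2-a^2}}\lesssim e^{-c\lambda 2^{-k/2}}$, and $A_k$ is a shell of radial thickness $\sim 2^{-k}$ around a sphere of radius in $[a,b]$.

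For each $T_k$ I would foliate $A_k$ by spheres of varying radius $s\in I_k$ and write $T_k h(x)=\int_{I_k}E_s F_s(x)\, ds$, with $E_s$ the Fourier extension operator for the sphere of radius $s$ and $F_s$ the bounded part of the symbol times $\mathcal{F}_d h$ restricted to that sphere. Since $s$ varies in a compact subset of $(0,\infty)$, both the Stein-Tomas theorem (for the baseline shell bound) and Tao's restriction theorem (producing a stronger bound whenever $1/p-1/q\geq 2/(d+2)$) apply uniformly in $s$. Combined with Minkowski's inequality in the radial variable, these yield a shell estimate of the form $\|T_k\|_{L^p\to L^q}\lesssim 2^{k\alpha}\, e^{-c\lambda 2^{-k/2}}\cdot 2^{-k\beta(p,q,d)}$, where $\beta(p,q,d)$ is the gain from the thin-shell extension estimate. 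Interpolating with the trivial bound $\|T_k\|_{L^2\to L^2}\lesssim 2^{k\alpha}$ extends the range to all of $D_\alpha$; summing in $k$ yields a convergent geometric series precisely when $\beta(p,q,d)>\alpha$, which is exactly the condition defining $D_\alpha$, and the factor $e^{-c\lambda 2^{-k/2}}$ balanced against $\lambda^{-1}$ produces the $(1+\lambda)^\gamma$ dependence.

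The hard part will be tracking the exponent $\gamma$ through both the interpolation and the summation, so as to obtain the general bound $\gamma\leq 0$ together with the sharper bound $\gamma\leq 2\alpha-2+1/p-1/q$ (strict at the corners $p=1$ or $q=\infty$) under the additional gap hypothesis $1/p-1/q\geq 2/(d+2)$. The relevance of this hypothesis is that $2/(d+2)$ is precisely the spectral gap at which Tao's restriction theorem becomes sharp in dimension $d$: placing $(p,q)$ on or beyond this line means the shell estimate is at its strongest and interpolation introduces no loss, while a logarithmic deficit at the endpoints $p=1$ or $q=\infty$ forces the strict inequality in those corner cases. A secondary but important point is that $m$ is only continuous, ruling out H\"ormander multiplier or stationary-phase techniques that would exploit smoothness of the symbol; however the foliation approach sketched above uses only $\|m\|_{L^\infty([a,b])}$, which handles this regularity issue cleanly.
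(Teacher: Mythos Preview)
Your dyadic approach is genuinely different from the paper's and is sound away from the critical line, but it has a real gap at the endpoint $\frac{1}{p}-\frac{1}{q}=\frac{2\alpha}{d+1}$, which \emph{is} included in $D_\alpha$. After interpolating the shell bound $\|T_k\|_{L^{p_2}\to L^{q_2}}\lesssim 2^{-k(1-\alpha)}$ (for $(p_2,q_2)\in\mathcal D$) with the trivial $\|T_k\|_{L^2\to L^2}\lesssim 2^{k\alpha}$, you obtain $\|T_k\|_{L^p\to L^q}\lesssim 2^{k(\alpha-\theta)}$, and summing in $k$ converges only when $\theta>\alpha$, i.e.\ when all three inequalities defining $D_\alpha$ are \emph{strict}. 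Your sentence ``summing in $k$ yields a convergent geometric series precisely when $\beta(p,q,d)>\alpha$, which is exactly the condition defining $D_\alpha$'' is therefore incorrect: $D_\alpha$ carries the closed constraint $\frac{1}{p}-\frac{1}{q}\geq\frac{2\alpha}{d+1}$, and the dyadic sum diverges logarithmically on that line. The paper resolves this by embedding $T_{\lambda,\alpha}$ in the analytic family $\mathcal T_{\lambda,s}=\frac{e^{(1-s)^2}}{\Gamma(1-s)}T_{\lambda,s}$ and applying Stein's interpolation theorem between the $L^{p_1}\to L^{q_1}$ estimate at $s=0$ (Proposition~\ref{prop:Tlambda0}) and the Guti\'errez-type bound \eqref{eq:interpolation_alphas} along $\Real(s)=\sigma\leq 1$; the factor $|1-s|$ coming from $1/\Gamma(1-s)$ kills the divergence of $\int_a^b(r^2-a^2)^{-\Real(s)}\,dr$ as $\Real(s)\to 1$ and is exactly what allows the closed endpoint.

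Your remark that the foliation argument uses only $\|m\|_{L^\infty}$ confirms rather than fixes the problem: as the paper notes in Remark~1(c), $m\in L^\infty$ suffices only in the non-endpoint case, whereas the continuity hypothesis on $m$ is there precisely to give the analytic continuation of $\mathcal T_{\lambda,s}$ up to $\Real(s)=1$. Finally, the role you assign to Tao's theorem is not quite right. The single-sphere building block you need is the $L^{p_2}\!\to L^{q_2}$ restriction--extension estimate of Corollary~\ref{cor:Gutierrez}, valid for $(p_2,q_2)\in\mathcal D$; Tao's restriction theorem does not enlarge this. The threshold $\frac{1}{p}-\frac{1}{q}\geq\frac{2}{d+2}$ enters purely algebraically: with the interpolation exponent $\theta=\frac{d+1}{2}\big(\frac{1}{p}-\frac{1}{q}\big)$ one gets $\gamma=2\alpha-(d+1)\big(\frac{1}{p}-\frac{1}{q}\big)$, and this is $\leq 2\alpha-2+\frac{1}{p}-\frac{1}{q}$ exactly when $(d+2)\big(\frac{1}{p}-\frac{1}{q}\big)\geq 2$. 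Verifying the same inequality (and the strict version at $p=1$ or $q=\infty$) in the remaining regime where $\theta$ is limited by $2d\,m_{p,q}-d$ still requires the detailed case analysis the paper carries out in case~(c); your sketch does not supply it.
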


\medskip
 
 \begin{rem} ~
   \begin{itemize}
     \item[(a)] We do not know whether $D_\alpha$ is the optimal range for $\alpha\in (0,1)$ under the given
     assumptions on the symbol $m$. In case $m\in C^1([a,b])$ it is certainly not because boundedness
     also holds whenever  $\frac{1}{p}-\frac{1}{q}\geq \frac{d-1+2\alpha}{2d}$, $\frac{1}{p}>
     \frac{d-1+2\alpha}{2d}$, $\frac{1}{q}< \frac{d+1-2\alpha}{2d}$. 
     In particular, this is true for the exponents $q=\infty,  \frac{d+\alpha}{2d}  \geq \frac{1}{p}>
     \frac{d-1+2\alpha}{2d}$, none of which is covered by Theorem~\ref{thm:Tlambdaalpha}.
     The proof of this result is a straightforward generalization of the proof of
     Theorem~\ref{thm:Tlambdaalpha_d=1} to the higher-dimensional case. The pointwise bound for the kernel $|K_\lambda(z)|\les |z|^{\alpha-1}$
     for $|z|\geq 1+\lambda^2$ from~\eqref{eq:d=1_Klambda_boundII} then generalizes to $|K_\lambda(z)|\les
     |z|^{\alpha-\frac{d+1}{2}}$. 
     \item[(b)] The proof actually yields an explicit expression for the decay rate $\gamma$, which,
     however, might not be optimal. For that reason we only highlight the important
     aspect for us, which is~\eqref{eq:gamma_conditions}. The assumption $\frac{1}{p}-\frac{1}{q}\geq
     \frac{2}{d+2}=\frac{2}{n+1}$ is designed for our application in the context of the LAP from
     Theorem~\ref{thm:1}.
     \item[(c)] The condition $m\in C([a,b])$ can be relaxed to $m\in L^\infty([a,b])$ in the
     non-endpoint case $\frac{1}{p}-\frac{1}{q}>\frac{2\alpha}{d+1}$. Technically, this is due to the fact
     that the operators $\mathcal T_{\lambda,s}$ from~\eqref{eq:def_Tlambdas} are still well-defined for
     $0\leq \Real(s)<1$ (but not for $\Real(s)=1$). Adapting the interpolation procedure from the Proof of
     Theorem~\ref{thm:Tlambdaalpha} accordingly, one obtains the result. Moreover, the assumption
     $m\in C([a,b])$ can be replaced by a continuity assumption near $|\xi|=a$ (keeping the boundedness
     assumption) without changing the result.
   \end{itemize}
 \end{rem}
 
  As indicated above, Theorem~\ref{thm:Tlambdaalpha} can be extended to
  certain exponent pairs  $(p,q)\in [1,\infty]\times [1,\infty]$ not belonging to $D_\alpha$ provided that
  $m$ is sufficiently smooth. One example for such an improvement was given in part (a) of the
  previous remark. The question of optimal ranges of exponents is challenging even in special cases. For
  instance, let us assume $\lambda=0$, $m\equiv 1$ and $A=\{\xi\in\R^d: 1\leq
  |\xi|\leq 2\}$ where $d\geq 2$. Then the operators $T_\alpha:=
  T_{0,\alpha}$  are given by 
  $$
    T_{\alpha} h =  \cF_d^{-1}\left(1_A(\cdot)(|\cdot|^2-1)^{-\alpha}\cF_d h\right). 
  $$
  Their mapping properties are identical to those of so-called Bochner-Riesz operators with
  negative index. The only difference is that the annulus $A$ is replaced by a ball and the singularity
  of the Fourier multiplier at the inner radius of the annulus now occurs at the boundary of the unit ball
  $B\subset\R^d$.
  More precisely, these operators are given by 
  $$
    \tilde T_\alpha h
    :=  \cF_d^{-1}\left( 1_B(\cdot) (1-|\cdot|^2)^{-\alpha}\cF_d h\right).
   $$
   An alternative description as a convolution operator can be found in
   \cite[p.225]{Boer_estimates}. In the case $\alpha=0$ this is the ball multiplier which is bounded
   from $L^p(\R^d)$ to $L^q(\R^d)$ whenever $1\leq p\leq 2\leq q\leq \infty$. It is known that the operator
   $\tilde T_0$ is bounded on $L^p(\R^d)$ only for $p=2$.
   This is a famous result from 1971 due to Fefferman~\cite{Feff_multiplier}. Up to our knowledge, it is not
   known what the optimal range of exponents for the ball multiplier is.  
   In the case $\alpha\in (0,1)$  the optimal region is contained in the set
   $$
     \mathcal D_\alpha
     := \left\{ (p,q)\in [1,\infty]^2 : \frac{1}{p}-\frac{1}{q} \geq \frac{2\alpha}{d+1},\;
     \frac{1}{p}>\frac{d-1+2\alpha}{2d},\; \frac{1}{q}<\frac{d+1-2\alpha}{2d} \right\},
   $$
   see \cite[Theorem~(iv)]{Boer_estimates}. A standing conjecture is that $\mathcal D_\alpha$ in fact
   coincides with the optimal region, see \cite[Conjecture 2]{KwonLee_Sharp}.    
   In the two-dimensional case $d=2$ the conjecture is true: B\"orjeson \cite[Theorem~(i)]{Boer_estimates}
   proved the corresponding estimates except for the critical line
   $\frac{1}{p}-\frac{1}{q}=\frac{2\alpha}{d+1}$ and the missing piece was proved by Bak~\cite[Theorem~1]{Bak_sharp}. 
   In the higher-dimensional case $d\geq 3$ it is true for    
   $\frac{(d+1)(d-1)}{2(d^2+4d-1)}<\alpha<1$ if $d$ is odd and  
   $\frac{(d+1)(d-2)}{2(d^2+3d-2)}<\alpha<1$ if $d$ is even
  \cite[Theorem 2.13]{KwonLee_Sharp}. We refer to~\cite[Theorem~1.1]{ChoKimLeeShim_Sharp},    
   \cite[Theorem~4]{BakMcMObe_OffTheLine},\cite[Theorem~(iii)]{Boer_estimates} and
   \cite[Theorem~1]{Gut_BochnerRiesz} for earlier results in this direction.
   When restricted to radially symmetric functions $\mathcal D_\alpha$ is optimal in all space
   dimensions and for all $\alpha\in [0,1)$. In the case $\alpha=0$ this follows from~\cite[Theorem
   2]{Herz_OnTheMean} and~\cite{KenTom_TheWeak}, while for $\alpha\in (0,1)$ this result can be found 
   in~\cite[Theorem~1]{BranCol_Bochner}.
   For estimates in the case $\alpha=0$ and $p=q$ with respect to mixed norms we refer to~\cite{Cord_disc}.
   
   \medskip
  
   In addition to Theorem~\ref{thm:Tlambdaalpha} we will need estimates for the 
   operators  $S_{\lambda}:L^p(\R^d)\to L^s(A)$ and their adjoints $S_{\lambda}^*:L^{s'}(A)\to L^{p'}(\R^d)$
   given by
  \begin{align} \label{eq:def_Slambda}
  \begin{aligned}
  S_{\lambda} h
  &:=  1_A(\cdot) \e{-\lambda\sqrt{|\cdot|^2-a^2}} m(|\cdot|)   \cF_d h(\cdot),\\
  S_{\lambda}^* g
  &:=  \cF_d^{-1} \left(  1_A(\cdot)  \e{-\lambda\sqrt{|\cdot|^2-a^2}}  \ov{m(|\cdot|)}  g(\cdot) \right).
  \end{aligned}
  \end{align}
  \begin{thm} \label{thm:Slambdaalpha}
   Let $d\in\N$, $0<a<b<\infty, \lambda\geq 0$ and $m\in L^\infty([a,b])$. Then the 
   operators $S_{\lambda}:L^p(\R^d)\to L^s(A)$, $S_{\lambda}^*:L^{s'}(A)\to
   L^{p'}(\R^d)$ are bounded provided that $1\geq \frac{1}{p}\geq \frac{1}{2},1\geq \frac{1}{s}\geq
   \frac{1}{p'}$ and we have  
    $$
      \|S_\lambda h\|_{L^s(A)} \les  \|h\|_{L^p(\R^d)} (1+\lambda)^{\frac{2}{s'}-\frac{2}{p}-\beta}.
    $$ 
  where $\beta=0$ if $d=1$ and, for sufficiently small $\eps>0$, 
      \begin{align} \label{eq:def_beta}
      \beta = \min\left\{ \frac{d-1}{p}-\frac{d-1}{s'},
    \frac{2(\frac{d+1}{p}-\frac{d-1}{s'}-1)}{p_*(d)'}-\eps,
    \frac{\frac{2}{p_*(d)'}(\frac{1}{p}-\frac{1}{2})}{\frac{1}{p_*(d)}-\frac{1}{2}} -\eps,   
    \frac{2}{p'}\right\} 
      \quad \text{ if }d\geq 2.
    \end{align}
    If the Restriction Conjecture is true, then the same estimate holds for $p_*(d)$ replaced by
    $\frac{2d}{d+1}$.
\end{thm}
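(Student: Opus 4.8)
The plan is to obtain the estimate by interpolating a few explicit bounds for $S_\lambda$. By duality $\|S_\lambda\|_{L^p(\R^d)\to L^s(A)}=\|S_\lambda^*\|_{L^{s'}(A)\to L^{p'}(\R^d)}$, so it suffices to bound $S_\lambda$; and since $|m(|\xi|)|\le\|m\|_\infty$ pointwise we may assume $m\equiv 1$. For $0\le\lambda\le 1$ the asserted inequality is immediate from Hausdorff--Young together with Hölder's inequality on the bounded set $A$, so the only issue is the decay as $\lambda\to\infty$.

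The first building block is elementary and holds for every admissible pair $(p,s)$. With $\tfrac1\rho:=\tfrac1s-\tfrac1{p'}\ge 0$ (here $s\le p'$ is used), Hölder and Hausdorff--Young give
$$
\|S_\lambda h\|_{L^s(A)}\le\big\|1_A\,\e{-\lambda\sqrt{|\cdot|^2-a^2}}\big\|_{L^\rho(A)}\,\|\cF_d h\|_{L^{p'}(A)}\le\big\|1_A\,\e{-\lambda\sqrt{|\cdot|^2-a^2}}\big\|_{L^\rho(A)}\,\|h\|_{L^p(\R^d)},
$$
and evaluating $\int_A\e{-\rho\lambda\sqrt{|\xi|^2-a^2}}\,d\xi$ (substituting $|\xi|=a+\tau^2$) produces the bound $(1+\lambda)^{-2}$, so $\|S_\lambda h\|_{L^s(A)}\lesssim(1+\lambda)^{2/s'-2/p}\|h\|_{L^p}$; this is the claim with $\beta=0$, which in particular already settles the case $d=1$.

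The second building block exploits the curvature of the sphere $\{|\xi|=a\}$. Foliating $A$ into spheres and writing $R_r\colon L^p(\R^d)\to L^s(S^{d-1})$, $R_r h=(\cF_d h)|_{rS^{d-1}}$, for the Fourier restriction operator (bounded uniformly for $r\in[a,b]$ by scaling, once it is bounded at all), we get
$$
\|S_\lambda h\|_{L^s(A)}^s=\int_a^b\e{-s\lambda\sqrt{r^2-a^2}}\,r^{d-1}\,\|R_r h\|_{L^s(S^{d-1})}^s\,dr\lesssim\|h\|_{L^p}^s\int_a^b\e{-s\lambda\sqrt{r^2-a^2}}\,dr\lesssim(1+\lambda)^{-2}\|h\|_{L^p}^s,
$$
that is, the claim with $\beta=2/p'$, on the set of pairs for which $R_r$ is bounded. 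Here one feeds in the trivial estimate $R_r\colon L^1\to L^\infty(S^{d-1})$, the Stein--Tomas estimate $R_r\colon L^{2(d+1)/(d+3)}\to L^2(S^{d-1})$ for $d\ge 2$, and $R_r\colon L^p\to L^1(S^{d-1})$ for every $p<p_*(d)$, the dual form of Tao's theorem (Theorem~\ref{thm:Tao}); under the Restriction Conjecture the last one holds for $p<2d/(d+1)$. Interpolating these restriction bounds among themselves yields $R_r\colon L^p\to L^s(S^{d-1})$ on an explicit region $D_{\mathrm{restr}}$ of exponents, hence the $S_\lambda$-estimate with $\beta=2/p'$ there.

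It remains to interpolate the two families. Iterated Riesz--Thorin interpolation (reaching an admissible $(\tfrac1p,\tfrac1s)$ as a convex combination of up to three of the base points and keeping track of the powers of $1+\lambda$, which combine affinely) yields $\|S_\lambda h\|_{L^s(A)}\lesssim(1+\lambda)^{2/s'-2/p-\beta}\|h\|_{L^p}$ with $\beta$ equal to the value at $(\tfrac1p,\tfrac1s)$ of the upper concave envelope of the function that is $2/p'$ on $D_{\mathrm{restr}}$ and $0$ on the rest of the admissible quadrilateral $\{\tfrac12\le\tfrac1p\le 1,\ \tfrac1{p'}\le\tfrac1s\le 1\}$. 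A computation identifies this envelope with $\min\{L_1,\dots,L_4\}$, where $L_1,\dots,L_4$ are the four affine functions of $(\tfrac1p,\tfrac1s)$ appearing in~\eqref{eq:def_beta}; the $-\eps$ terms reflect the strict inequality $p<p_*(d)$ in Tao's theorem and disappear, with $p_*(d)$ replaced by $2d/(d+1)$, under the Restriction Conjecture. The main obstacle is precisely this last, essentially combinatorial step — locating the vertices of $D_{\mathrm{restr}}$ correctly (notably the Tao vertex $\tfrac1p=\tfrac1{p_*(d)}$, $s=1$) and verifying that the concave envelope is exactly the stated minimum, together with the careful bookkeeping of the $\eps$-losses; by contrast the foliation, the uniform-in-$r$ restriction estimates, and the elementary integral estimates are routine.
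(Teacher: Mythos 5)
Your overall strategy matches the paper's: both rely on a Hausdorff--Young/H\"older bound (giving $\beta=0$) and a Fourier-restriction bound on spherical foliations (giving $\beta=\tfrac{2}{p'}$ on the Tao region), and both pass from these endpoints to the general case by Riesz--Thorin interpolation while tracking the power of $1+\lambda$. Your two building blocks are set up correctly, including the $(1+\lambda)^{2/s'-2/p}$ computation for the $L^\rho$-norm of the exponential weight and the observation that $R_r$ is bounded uniformly in $r\in[a,b]$ by scaling, and your ``concave envelope'' formulation of the interpolation step is the right conceptual picture (the $\gamma$'s combine affinely under Riesz--Thorin, so the achievable $\beta$ at $(\tfrac1p,\tfrac1s)$ is governed by the concave envelope of the base values).

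However, there is a genuine gap at the point you yourself flag as ``the main obstacle.'' The theorem's actual content is the identity $\beta=\min\{L_1,\dots,L_4\}$, and your proof never establishes it: you assert that ``a computation identifies this envelope with $\min\{L_1,\dots,L_4\}$'' without carrying it out. This is not a routine bookkeeping exercise. The paper spends the bulk of its proof on it: it prescribes $\theta$, $\tfrac{\theta}{p_2}$, $\tfrac{1-\theta}{p_1}$, $\tfrac{\theta}{s_2}$, $\tfrac{1-\theta}{s_1}$ as nested $\min$/$\max$ expressions of $(\tfrac1p,\tfrac1s)$, then verifies that all the admissibility constraints ($1\ge\tfrac1{p_2}>\tfrac1{p_*(d)}$, $1\ge\tfrac1{s_2}\ge\tfrac{d+1}{(d-1)p_2'}$, $1\ge\tfrac1{p_1}\ge\tfrac12$, $1\ge\tfrac1{s_1}\ge\tfrac1{p_1'}$) are met, and finally turns the $\max$ defining $\tfrac{\theta}{p_2}$ into the $\min$ defining $\beta$ via the identity $-\tfrac{2}{p_1}-\tfrac{2\theta}{p_1'}=-\tfrac{2}{p}-2\theta+\tfrac{2\theta}{p_2}$. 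None of this is present in your write-up. Moreover, your characterization of the vertices of $D_{\mathrm{restr}}$ is incomplete: besides the Tao vertex at $(\tfrac1{p_*(d)},1)$ there are the two trivial corners $(1,0)$ and $(1,1)$ as well as the Stein--Tomas corner $(\tfrac1{p_*(d)},\tfrac{d+1}{(d-1)}\tfrac{1}{p_*(d)'})$ on the line $\tfrac1s=\tfrac{d+1}{(d-1)p'}$, and it is precisely the interplay among all of these that produces four affine pieces in the final $\min$. Without identifying these corners and checking that each candidate segment is admissible for the interpolation, you cannot conclude that the envelope equals the stated formula. So the approach is sound and matches the paper, but the heart of the theorem --- the explicit $\beta$ --- is left unproved.
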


  
%

 \medskip

 The outline of this paper is the following. In Section~\ref{sec:nD} we first derive a representation formula 
 for the functions $\mathcal R(\lambda+\i\eps)f,\mathcal R(\lambda\pm\i 0)f$ that are of interest in
 Theorem~\ref{thm:1}. As a main tool we use one-sided Fourier transforms. In Section~\ref{sec:ProofThm} we complete the list of
 required tools from Harmonic Analysis, state all the essential estimates
 (Propositions~\ref{prop:smallfreq},~\ref{prop:interferencefreq},~\ref{prop:intermediatefreq},~\ref{prop:largefreq})
 and combine them in order to prove Theorem~\ref{thm:1}. The application to nonlinear PDEs
 from Corollary~\ref{cor:2} is demonstrated as well. 
 In the Sections~\ref{sec:ProofTlambda_d=1},~\ref{sec:ProofTlambda},~\ref{sec:ProofSlambda}
 we subsequently prove Theorem~\ref{thm:Tlambdaalpha_d=1}, Theorem~\ref{thm:Tlambdaalpha} and
 Theorem~\ref{thm:Slambdaalpha}.
 The Propositions are proved in the following four sections.

 \medskip

 Before starting our analysis let us fix some notation  and conventions. 
 For $p\in [1,\infty]$, we write $L^p(\R^d)$ for the (classical) Lebesgue space of
 complex-valued $p$-integrable functions.  The corresponding
 standard norms are denoted by $\|\cdot\|_{L^p(\R^d)}$. Moreover, we write $p'=\frac{p}{p-1} \in
 [1,\infty]$ for the conjugate exponent. The inner product in $L^2(\R^d)$ is  
 $\skp{f}{g}_{L^2(\R^d)}=\int_{\R^d} f(x)\ov{g(x)}\, \mathrm{d}x$. The $d$-dimensional Fourier transform is
 given by $\cF_dg(\xi) = (2\pi)^{-d/2} \int_{\R^d} g(x) \e{-\i x\cdot\xi}\,\mathrm{d}x$ with inverse
 $\cF_d^{-1} h(\xi) = (\cF_d h)(-\xi)$ where $g,h:\R^d\to\R$ are sufficiently regular. At some points it will
 be convenient to slightly abuse the notation by writing $\mathcal F_d^{-1}(g(\xi))(x)$ in place of $\mathcal
 F_d^{-1}(g)(x)$. The space of complex-valued Schwartz functions is $\mathcal S(\R^n)$.  
 The sphere of radius $\mu$ in $\R^d$ is given by $\Ss^{d-1}_\mu = \{\xi\in\R^d:
 |\xi|=\mu\}$ along with its canonical surface measure $\sigma_\mu$. The corresponding Lebesgue spaces is
 denoted by $L^s(\Ss^{d-1}_\mu),s\in [1,\infty]$.

\section{The representation formula}  \label{sec:nD}

  In this section we derive a representation formula for the  outgoing solution
  of the Helmholtz equation~\eqref{eq:nDNLH} where $V$ is the step potential from~\eqref{eq:def_V}, i.e.,
  $$
    V(x,y) = \begin{cases}
      V_1 & \quad \text{if } x\in\R^{n-1},y>0, \\
      V_2 & \quad \text{if } x\in\R^{n-1},y<0
    \end{cases}
    \qquad\text{with }V_1>V_2.
  $$
  To this end we solve the perturbed Helmholtz equation
  \begin{equation}\label{eq:complexnDNLH2}
    - \Delta u_\eps + V(x,y)u_\eps - (\lambda+\i\eps) u_\eps = f \quad\text{in }\R^n
  \end{equation}
  where $\lambda > V_1 > V_2$ and $\eps > 0$. We define the one-sided
  Fourier transforms of $f\in \mathcal S(\R^n)$ via
  \begin{align*}
   (\cF_n^\pm f)(\xi,\eta) := (\cF_n f_\pm) (\xi,\eta)
   \qquad\text{where }
   f_\pm(x,y):=f(x,y)\cdot 1_{(0, \infty)}(\pm y). 
  \end{align*}

  \begin{prop}\label{prop:properties_reducedFouriertransform}
    For all $v\in \mathcal S(\R^n)$ and $\xi\in\R^{n-1},\eta\in\R$ we have 
    \begin{align*}
      \cF_n^+(-\Delta v)(\xi,\eta)
      &= (|\xi|^2+\eta^2) (\cF_n^+ v) (\xi,\eta) + (2\pi)^{-\frac{1}{2}}\left(\i\eta \cF_{n-1}[v(\cdot,0)](\xi)  +
      \cF_{n-1}[v'(\cdot,0)](\xi) \right),  \\
      \cF_n^-(-\Delta v)(\xi,\eta)
      &= (|\xi|^2+\eta^2) (\cF_n^- v) (\xi,\eta) - (2\pi)^{-\frac{1}{2}}\left(\i\eta
      \cF_{n-1}[v(\cdot,0)](\xi) +\cF_{n-1}[v'(\cdot,0)](\xi) \right).
    \end{align*}
    Moreover, $\ran(\cF_n^+),\ran(\cF_n^-)$ are $L^2(\R^n)$-orthogonal to each
    other and $\cF_n = \cF_n^+ + \cF_n^-$. 
    Here we denote $\partial_y v(x, y) =: v'(x,y)$.
  \end{prop}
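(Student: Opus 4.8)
The plan is to prove Proposition~\ref{prop:properties_reducedFouriertransform} by direct computation, integrating by parts on the half-space $\R^{n-1}\times(0,\infty)$. First I would write, for $v\in\mathcal S(\R^n)$,
\[
  \cF_n^+(-\Delta v)(\xi,\eta) = (2\pi)^{-n/2}\int_{\R^{n-1}}\int_0^\infty (-\Delta v)(x,y)\,\e{-\i x\cdot\xi}\e{-\i y\eta}\dy\,\mathrm{d}x,
\]
and split $-\Delta = -\Delta_x - \partial_y^2$. The tangential part $-\Delta_x$ causes no boundary terms: since $v(\cdot,y)$ is Schwartz in $x$ for each fixed $y$, two integrations by parts in $x$ simply produce the factor $|\xi|^2$, giving $|\xi|^2(\cF_n^+ v)(\xi,\eta)$. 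For the normal part I would integrate by parts twice in $y$ on $(0,\infty)$. The first integration by parts gives $-[\partial_y v\,\e{-\i y\eta}]_0^\infty - \i\eta\int_0^\infty \partial_y v\,\e{-\i y\eta}\dy$; the boundary term at $y=+\infty$ vanishes by the Schwartz decay, leaving $+\partial_y v(x,0)$ (note $\e{0}=1$). The second integration by parts on the remaining integral gives $-\i\eta\big(-v(x,0) - \i\eta\int_0^\infty v\,\e{-\i y\eta}\dy\big) = \i\eta\, v(x,0) + \eta^2\int_0^\infty v\,\e{-\i y\eta}\dy$. Collecting terms and applying $\cF_{n-1}$ in the $x$-variable to the boundary contributions $\partial_y v(x,0)$ and $\i\eta\,v(x,0)$ yields the stated formula with the prefactor $(2\pi)^{-1/2}$, which is exactly $(2\pi)^{-n/2}/(2\pi)^{-(n-1)/2}$ — the discrepancy between the $n$-dimensional and $(n-1)$-dimensional Fourier normalizations. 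The formula for $\cF_n^-$ is obtained identically, integrating over $(-\infty,0)$: now the surviving boundary terms are at $y=0$ with the opposite orientation, producing the overall minus sign in front of the bracket.

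For the last two assertions: $\cF_n = \cF_n^+ + \cF_n^-$ is immediate from $f = f_+ + f_-$ (the overlap $\{y=0\}$ has measure zero) and linearity of $\cF_n$. For the orthogonality of the ranges, I would argue that $f_+$ and $g_-$ have disjoint supports in $\R^n$ up to a null set, hence $\skp{f_+}{g_-}_{L^2(\R^n)} = 0$, and then invoke the Plancherel theorem for $\cF_n$ to conclude $\skp{\cF_n^+ f}{\cF_n^- g}_{L^2(\R^n)} = \skp{\cF_n f_+}{\cF_n g_-}_{L^2(\R^n)} = \skp{f_+}{g_-}_{L^2(\R^n)} = 0$; density of $\mathcal S(\R^n)$ in $L^2(\R^n)$ extends this to the full ranges.

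I do not expect a genuine obstacle here — this is a routine computation — but the one point requiring care is bookkeeping of the two boundary terms from the double integration by parts in $y$ and, in particular, getting the normalization constant $(2\pi)^{-1/2}$ right, since it comes from reconciling the $n$- and $(n-1)$-dimensional Fourier transform conventions fixed in the notation section. A secondary point is being explicit that all the integrations by parts and the interchange of $\cF_{n-1}$ with the $y$-boundary evaluation are justified by the Schwartz regularity and decay of $v$, so that the boundary terms at infinity vanish and Fubini applies throughout.
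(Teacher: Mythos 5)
Your proof is correct and follows exactly the route the paper takes: the paper leaves the double integration by parts in $y$ implicit, and spells out only the Plancherel/disjoint-support argument for orthogonality, which you reproduce verbatim, while your normalization count $(2\pi)^{-n/2}=(2\pi)^{-1/2}\cdot(2\pi)^{-(n-1)/2}$ for the boundary terms is the right bookkeeping. One small sign typo: after the second integration by parts the inner bracket should read $-v(x,0)+\i\eta\int_0^\infty v\,\e{-\i y\eta}\,\mathrm{d}y$ rather than $-v(x,0)-\i\eta\int_0^\infty v\,\e{-\i y\eta}\,\mathrm{d}y$; with the corrected sign the multiplication by $-\i\eta$ does yield the final expression $\i\eta\,v(x,0)+\eta^2\int_0^\infty v\,\e{-\i y\eta}\,\mathrm{d}y$ you stated.
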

      
  We only comment on the orthogonality property. For $f,g\in \mathcal S(\R^n)$  Plancherel's identity
  implies 
  $$
    \skp{\cF_n^+ f}{\cF_n^- g}_{L^2(\R^n)}
    = \skp{\cF_n f_+}{\cF_n g_-}_{L^2(\R^n)}
    = \skp{f_+}{g_-}_{L^2(\R^n)}
    = 0
  $$
  since the supports of $f_+,g_-$ intersect only in a null set.
  We introduce $\mu_j:= \sqrt{\lambda-V_j}>0$ and
  the complex-valued functions $\nu_{j,\eps}:\R^{n-1}\to \C$ via 
  $$
    \nu_{j,\eps}(\xi)^2
    = \mu_j^2-|\xi|^2+\i\eps
    = \lambda-V_j-|\xi|^2+\i\eps
    \quad\text{and}\quad \Imag(\nu_{j,\eps}(\xi))>0.
  $$
  Notice that $\nu_{j,\eps}(\xi)\to \nu_j(\xi)$ as $\eps\searrow 0$ where
 \begin{align}\label{eq:nu}
 \nu_j(\xi) :=
 \begin{cases}
  	 (\mu_j^2-|\xi|^2)^{\frac{1}{2}} & \quad\text{if } |\xi|\leq \mu_j,  \\
    \i(|\xi|^2-\mu_j^2)^{\frac{1}{2}} & \quad\text{if } |\xi|\geq \mu_j.
  \end{cases}
  \end{align}
   Later we will need the following elementary estimate:
  \begin{equation}\label{eq:estimate_nu}
    1+|\xi| \les |\nu_j(\xi)| \sqrt{1+|\nabla \nu_j(\xi)|^2} \les 1+|\xi| \qquad (\xi\in\R^{n-1}).
  \end{equation}

  \begin{prop}\label{prop:ResolventFormulanD}
    Let $\lambda>V_1>V_2$ and $f\in S(\R^n)$. Then, for any given $\eps>0$, the unique solution
    $u_\eps\in S(\R^n)$ of~\eqref{eq:complexnDNLH2} is given by
    \begin{align} \label{eq:formula_uesp_nD}
      \begin{aligned}
 	u_\eps(x,y)
   &= \cF_n^{-1} \left( \frac{\mathcal F_n^+f }{|\cdot|^2-\mu_1^2-\i\eps} + \frac{\mathcal
   F_n^-f }{|\cdot|^2-\mu_2^2 -\i\eps} \right)(x,y)  \\
     & \quad  + \cF_{n-1}^{-1} \left(    \e{\i |y|\nu_{1,\eps}}  (m_{1,\eps} g_{+,\eps} +  m_{2,\eps}
     g_{-,\eps})\right)(x) \\
     &\quad  + \cF_{n-1}^{-1} \left( \e{\i |y|\nu_{2,\eps}}  (m_{3,\eps} g_{+,\eps} +m_{4,\eps}
     g_{-,\eps})\right)(x)
    \end{aligned}
    \end{align}
    where $g_{+,\eps}(\xi)=\cF_n^+ f(\xi,-\nu_{1,\eps}(\xi))$,
    $g_{-,\eps}(\xi)=\cF_n^-f(\xi,\nu_{2,\eps}(\xi))$ and
    \begin{align} \label{eq:def_mjeps}
  \begin{aligned}
  m_{1,\eps}(\xi)
  &:= \frac{\i\sqrt{\pi/2}}{\nu_{1,\eps}(\xi)+\nu_{2,\eps}(\xi)}\cdot
  \left(\sign(y)-\frac{\nu_{2,\eps}(\xi)}{\nu_{1,\eps}(\xi)}\right),
  \\
  m_{2,\eps}(\xi) 
  &:= \frac{\i\sqrt{\pi/2}}{\nu_{1,\eps}(\xi)+\nu_{2,\eps}(\xi)}\cdot \left(1+\sign(y)\right),\\
  m_{3,\eps}(\xi)
  &:= \frac{\i\sqrt{\pi/2}}{\nu_{1,\eps}(\xi)+\nu_{2,\eps}(\xi)}\cdot \left(1-\sign(y)\right),\\
  m_{4,\eps}(\xi)
  &:= \frac{\i\sqrt{\pi/2}}{\nu_{1,\eps}(\xi)+\nu_{2,\eps}(\xi)}\cdot \left(-\sign(y)-\frac{\nu_{1,\eps}(\xi)}{\nu_{2,\eps}(\xi)}\right).
  \end{aligned}
\end{align}
  \end{prop}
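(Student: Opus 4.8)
\emph{Strategy and uniqueness.} The plan is to turn \eqref{eq:complexnDNLH2} into a $\xi$-parametrised family of second-order ODEs in the variable $y$ by means of the one-sided Fourier transforms from Proposition~\ref{prop:properties_reducedFouriertransform}, to solve these ODEs, and to close the computation by a transmission (matching) argument at the interface $\{y=0\}$. Uniqueness is immediate: since $\sigma(-\Delta+V)=[V_2,\infty)\subset\R$, the point $\lambda+\i\eps$ belongs to the resolvent set of the self-adjoint operator $-\Delta+V$ for each $\eps>0$, so \eqref{eq:complexnDNLH2} has exactly one solution in $L^2(\R^n)$; it therefore suffices to identify this solution with the right-hand side of \eqref{eq:formula_uesp_nD}.

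\emph{Reduction to ODEs.} I would apply $\cF_n^+$ and $\cF_n^-$ to \eqref{eq:complexnDNLH2}. As $V\equiv V_1$ on $\supp f_+$ and $V\equiv V_2$ on $\supp f_-$, one has $\cF_n^+(Vu_\eps)=V_1\cF_n^+u_\eps$ and $\cF_n^-(Vu_\eps)=V_2\cF_n^-u_\eps$, so with $U_0:=\cF_{n-1}[u_\eps(\cdot,0)]$ and $U_1:=\cF_{n-1}[u_\eps'(\cdot,0)]$ (the yet unknown interface data, well defined since $u_\eps\in H^2(\R^n)$) Proposition~\ref{prop:properties_reducedFouriertransform} turns \eqref{eq:complexnDNLH2} into
\begin{align*}
  \bigl(\eta^2-\nu_{1,\eps}(\xi)^2\bigr)\,\cF_n^+u_\eps(\xi,\eta)
  &= \cF_n^+f(\xi,\eta)-(2\pi)^{-1/2}\bigl(\i\eta\,U_0(\xi)+U_1(\xi)\bigr),\\
  \bigl(\eta^2-\nu_{2,\eps}(\xi)^2\bigr)\,\cF_n^-u_\eps(\xi,\eta)
  &= \cF_n^-f(\xi,\eta)+(2\pi)^{-1/2}\bigl(\i\eta\,U_0(\xi)+U_1(\xi)\bigr),
\end{align*}
where I used $\eta^2-\nu_{j,\eps}(\xi)^2=|\xi|^2+\eta^2-\mu_j^2-\i\eps$. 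These symbols have non-vanishing imaginary part on the real axis, so I may divide, add (using $\cF_n=\cF_n^++\cF_n^-$), and take $\cF_n^{-1}$. The outcome is $u_\eps$ written as the free-resolvent piece $\cF_n^{-1}\bigl(\cF_n^+f/(|\cdot|^2-\mu_1^2-\i\eps)+\cF_n^-f/(|\cdot|^2-\mu_2^2-\i\eps)\bigr)$ — which is the first line of \eqref{eq:formula_uesp_nD} — plus the inverse transform of $(2\pi)^{-1/2}(\i\eta U_0+U_1)$ times $(\eta^2-\nu_{2,\eps}^2)^{-1}-(\eta^2-\nu_{1,\eps}^2)^{-1}$. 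Keeping the two kernels together, the $\eta$-integrand decays like $|\eta|^{-3}$, so the inverse $y$-transform converges absolutely; a residue evaluation (the poles $\pm\nu_{j,\eps}$ lie off the real axis, with $\nu_{j,\eps}$ in the upper half-plane) — in which the inverse $y$-transforms of $(\eta^2-\nu^2)^{-1}$ and of $\i\eta(\eta^2-\nu^2)^{-1}$ are $\i\sqrt{\pi/2}\,\nu^{-1}\e{\i|y|\nu}$ and $-\sqrt{\pi/2}\,\sign(y)\e{\i|y|\nu}$ respectively — and collecting the coefficients of $\e{\i|y|\nu_{1,\eps}}$ and $\e{\i|y|\nu_{2,\eps}}$ show that $\cF_{n-1}[u_\eps(\cdot,y)](\xi)$ equals $H(\xi,y)+\e{\i|y|\nu_{1,\eps}(\xi)}\alpha(\xi)+\e{\i|y|\nu_{2,\eps}(\xi)}\beta(\xi)$, where $H$ is the $x$-Fourier transform of the first line of \eqref{eq:formula_uesp_nD} and $\alpha,\beta$ are explicit linear combinations of $U_0(\xi),U_1(\xi)$ with coefficients depending only on $\nu_{1,\eps}(\xi),\nu_{2,\eps}(\xi),\sign(y)$.

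\emph{Closing the system and the explicit form.} It remains to pin down $U_0,U_1$. Imposing that the previous expression and its $\partial_y$-derivative be consistent at $y=0$ with $U_0=\cF_{n-1}[u_\eps(\cdot,0)]$ and $U_1=\cF_{n-1}[u_\eps'(\cdot,0)]$ yields a $2\times2$ linear system for $(U_0,U_1)$; its determinant equals $(\nu_{1,\eps}+\nu_{2,\eps})^2/(4\nu_{1,\eps}\nu_{2,\eps})\neq0$ (since $\Imag\nu_{j,\eps}>0$), and its inhomogeneity is built from the boundary values $H(\xi,0)$, $\partial_yH(\xi,0)$ of the free-resolvent piece, which the explicit kernels give as $H(\xi,0)=\tfrac{\i}{2}\sqrt{2\pi}\,(g_{+,\eps}/\nu_{1,\eps}+g_{-,\eps}/\nu_{2,\eps})$ and $\partial_yH(\xi,0)=\tfrac12\sqrt{2\pi}\,(g_{+,\eps}-g_{-,\eps})$, with $g_{\pm,\eps}$ exactly as in the statement (here $\cF_n^\pm f$ at the complex arguments $\mp\nu_{j,\eps}$ is meant in the sense of the holomorphic extension, which is legitimate because $\mp\nu_{j,\eps}$ lies in the half-plane where the defining half-space integral converges absolutely). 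Solving the $2\times2$ system, substituting $U_0,U_1$ back into $\alpha,\beta$, and simplifying the resulting rational expressions in $\nu_{1,\eps},\nu_{2,\eps}$ — distinguishing the cases $\sign(y)=\pm1$ — reproduces exactly the coefficient functions $m_{1,\eps},\dots,m_{4,\eps}$ of \eqref{eq:def_mjeps}, that is, \eqref{eq:formula_uesp_nD}. Running the same chain of identities in the reverse direction shows that the right-hand side of \eqref{eq:formula_uesp_nD} does solve \eqref{eq:complexnDNLH2}, hence by the uniqueness noted above it is $u_\eps$.

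\emph{Main difficulty.} The architecture is routine; the real work — and the part most prone to sign errors — is the bookkeeping of the second and third paragraphs: the residue evaluations, recognising that the boundary data of the free-resolvent piece are precisely the $g_{\pm,\eps}$, and collapsing the solved $2\times2$ system into the compact shape \eqref{eq:def_mjeps}. A minor technical point is the use of Proposition~\ref{prop:properties_reducedFouriertransform} for $u_\eps$: because $V$ jumps across $\{y=0\}$, $u_\eps$ is only in $H^2(\R^n)$ (smooth off the interface), not in $\mathcal S(\R^n)$, so one either extends Proposition~\ref{prop:properties_reducedFouriertransform} from $\mathcal S(\R^n)$ to $H^2(\R^n)$ by density (the interface-trace maps being continuous on $H^2$), or verifies \eqref{eq:formula_uesp_nD} directly — each of its three lines lies in $H^2(\R^n)$, is smooth off $\{y=0\}$, and decays rapidly, so the extended proposition applies to it.
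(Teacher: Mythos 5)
Your argument is correct and reaches the same formula, but it closes the system in a genuinely different way from the paper. Both proofs apply Proposition~\ref{prop:properties_reducedFouriertransform}, invert the one-sided transforms via residues, and reduce everything to solving a $2\times2$ linear system for the interface data $U_0=\cF_{n-1}[u_\eps(\cdot,0)]$, $U_1=\cF_{n-1}[u_\eps'(\cdot,0)]$. The paper obtains that system from the orthogonality $\ran(\cF_n^+)\perp\ran(\cF_n^-)$: it pairs $\cF_n^\mp u_\eps$ against test functions $\cF_n^\pm(\phi\otimes\zeta)$ with $\zeta(z)=\sqrt{2\pi}\e{-|z|}$ and evaluates the resulting $\eta$-integrals by residues, which yields directly the conditions $-\i\nu_{1,\eps}U_0+U_1=\sqrt{2\pi}\,g_{+,\eps}$ and $\i\nu_{2,\eps}U_0+U_1=-\sqrt{2\pi}\,g_{-,\eps}$ (equivalently, the Hardy-space/residue-cancellation requirement that makes $\cF_n^\pm u_\eps$ extend holomorphically into the appropriate half-plane in $\eta$). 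You instead first assemble $u_\eps=H+\text{(correction terms in }U_0,U_1)$ and then impose a transmission matching at $y=0$, namely that this expression and its $y$-derivative reproduce $U_0$ and $U_1$; I verified that your boundary data for the free piece, $H(\xi,0)=\tfrac{\i}{2}\sqrt{2\pi}(g_{+,\eps}/\nu_{1,\eps}+g_{-,\eps}/\nu_{2,\eps})$ and $\partial_yH(\xi,0)=\tfrac12\sqrt{2\pi}(g_{+,\eps}-g_{-,\eps})$, the determinant $(\nu_{1,\eps}+\nu_{2,\eps})^2/(4\nu_{1,\eps}\nu_{2,\eps})$, and the fact that the one-sided limits at $y=0^\pm$ agree (the $\sign(y)$-jumps cancel, so the matching is well-posed) are all correct, and that your system is row-equivalent to~\eqref{eq:formulaInitialvalues_nD}. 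The paper's orthogonality argument has the mild advantage of never differentiating across the interface and making the nontriviality of the constraint manifest from the start; your matching argument is more elementary (purely a transmission condition) and arguably more physically transparent, at the price of having to check that the $y\to0^\pm$ limits exist and agree. Your remarks about uniqueness via self-adjointness, the holomorphic extension of $\cF_n^\pm f$ to the half-plane where the defining integral converges, and the need to extend Proposition~\ref{prop:properties_reducedFouriertransform} beyond $\mathcal S(\R^n)$ (since $u_\eps$ is only $H^2$ with rapid decay, not Schwartz across $\{y=0\}$) are all appropriate and, if anything, more carefully flagged than in the paper.
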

  \begin{proof}
    From Proposition~\ref{prop:properties_reducedFouriertransform} we obtain  
    \begin{align*}
    (|\xi|^2+\eta^2 - (\mu_1^2+\i\eps)) \mathcal F_n^+u_\eps(\xi,\eta)+ (2\pi)^{-\frac{1}{2}} \left(\i\eta
    \cF_{n-1}[u_\eps(\cdot,0)](\xi)   + \cF_{n-1}[u_\eps'(\cdot,0)](\xi) \right)
    &= \mathcal F_n^+f(\xi,\eta),\\
    (|\xi|^2+\eta^2 - (\mu_2^2+\i\eps)) \mathcal F_n^- u_\eps(\xi,\eta)- (2\pi)^{-\frac{1}{2}} \left(\i\eta
    \cF_{n-1}[u_\eps(\cdot,0)](\xi)  + \cF_{n-1}[u_\eps'(\cdot,0)](\xi) \right)
    &= \mathcal F_n^-f(\xi,\eta).
    \end{align*}
    So $\nu_{j,\eps}(\xi)^2 = \mu_j^2+\i\eps-|\xi|^2$ yields the formula   
  \begin{align} \label{eq:nD_transformed_equations}
    \begin{aligned}
     \mathcal F_n^+ u_\eps(\xi,\eta)
    &= \frac{\mathcal F_n^+f(\xi,\eta)-(2\pi)^{-\frac{1}{2}} \left(  \i\eta
    \cF_{n-1}[u_\eps(\cdot,0)](\xi)  + \cF_{n-1}[u_\eps'(\cdot,0)](\xi) \right)}{\eta^2-\nu_{1,\eps}(\xi)^2},
    \\
    \mathcal F_n^- u_\eps(\xi,\eta)
    &= \frac{\mathcal F_n^-f (\xi,\eta)+ (2\pi)^{-\frac{1}{2}} \left(  \i\eta
    \cF_{n-1}[u_\eps(\cdot,0)](\xi)  + \cF_{n-1}[u_\eps'(\cdot,0)](\xi)  \right)}{\eta^2-\nu_{2,\eps}(\xi)^2}.
    \end{aligned}
  \end{align}
  We now exploit $\ran(\cF_n^+)\perp\ran(\cF_n^-)$ in order to compute
  $\cF_{n-1}[u_\eps(\cdot,0)](\xi) ,\cF_{n-1}[u_\eps'(\cdot,0)](\xi) $. The Residue Theorem
  gives for all $\phi\in\mathcal S(\R^{n-1})$ and $\zeta(z):=\sqrt{2\pi} \e{-|z|}$ for $z\in\R$
  \begin{align*}
    0
    &=  \skp{\mathcal F_n^- u_\eps}{\mathcal F_n^+(\phi\otimes \zeta)}_{L^2(\R^n)} \\
    &= \int_\R\int_{\R^{n-1}} \left(\frac{\mathcal F_n^-f(\xi,\eta)+  (2\pi)^{-\frac{1}{2}}
    (\i\eta \cF_{n-1}[u_\eps(\cdot,0)](\xi) +\cF_{n-1}[u_\eps'(\cdot,0)](\xi) )}{\eta^2-\nu_{2,\eps}(\xi)^2}
    \cdot \frac{\overline{\hat\phi(\xi)}}{-\i\eta+1} \right) \, \mathrm{d}\xi \, \mathrm{d}\eta \\
    &= \int_{\R^{n-1}}  \overline{\hat\phi(\xi)} \cdot \Bigg[ \left( \int_\R
    \frac{\mathcal F_n^-f(\xi,\eta)}{(\eta^2-\nu_{2,\eps}(\xi)^2)(-\i\eta+1)} \, \mathrm{d}\eta \right)   \\
    &\qquad\qquad\qquad + \frac{\cF_{n-1}[u_\eps'(\cdot,0)](\xi) }{\sqrt{2\pi}} \left( \int_\R
    \frac{1}{(\eta^2-\nu_{2,\eps}(\xi)^2)(-\i\eta+1)}\, \mathrm{d}\eta\right) \\
    &\qquad \qquad\qquad+ \frac{\cF_{n-1}[u_\eps(\cdot,0)](\xi) }{\sqrt{2\pi}}  \left( \int_\R
    \frac{\i \eta }{(\eta^2-\nu_{2,\eps}(\xi)^2)(-\i\eta+1)}\, \mathrm{d}\eta\right)  \Bigg]  \, \mathrm{d}\xi \\
    &= \int_{\R^{n-1}} \frac{\i \pi \overline{\hat\phi(\xi)}}{\nu_{2,\eps}(\xi)(1-\i\nu_{2,\eps}(\xi))} \cdot
    \Big( \mathcal F_n^-f(\xi,\nu_{2,\eps}(\xi))  +  \frac{
    \i \nu_{2,\eps}(\xi)\cF_{n-1}[u_\eps(\cdot,0)](\xi) +\cF_{n-1}[u_\eps'(\cdot,0)](\xi)}{\sqrt{2\pi}}  \Big) \,
    \mathrm{d}\xi \\
    \intertext{and similarly }
    0
    &= \skp{\mathcal F_n^+ u_\eps}{\mathcal F_n^-(\phi\otimes \zeta)}_{L^2(\R^n)} \\
    &= \int_{\R^{n-1}} \frac{\i \pi  \overline{\hat\phi(\xi)}}{\nu_{1,\eps}(\xi)(1-\i\nu_{1,\eps}(\xi))} \cdot
    \Big( \mathcal F_n^+ f(\xi,-\nu_{1,\eps}(\xi))  - \frac{
    \i \nu_{2,\eps}(\xi)\cF_{n-1}[u_\eps(\cdot,0)](\xi) +\cF_{n-1}[u_\eps'(\cdot,0)](\xi)}{\sqrt{2\pi}}    \Big) \, \mathrm{d}\xi.
  \end{align*}
  Since $\phi\in\mathcal S(\R^{n-1})$ was arbitrary, we get for almost all $\xi\in\R^{n-1}$
  \begin{align*} 
    \matII{-\i\nu_{1,\eps}(\xi)}{1}{-\i\nu_{2,\eps}(\xi)}{-1} \vecII{\cF_{n-1}[u_\eps(\cdot,0)](\xi) }{\cF_{n-1}[u_\eps'(\cdot,0)](\xi) }
    =\sqrt{2\pi} \vecII{\cF_n^+f(\xi,-\nu_{1,\eps}(\xi))}{\cF_n^-f(\xi,\nu_{2,\eps}(\xi))}
    =  \sqrt{2\pi} \vecII{g_{+,\eps}(\xi)}{g_{-,\eps}(\xi)}.
  \end{align*}
  Inverting this linear system we get
   \begin{align}\label{eq:formulaInitialvalues_nD}
     \vecII{\cF_{n-1}[u_\eps(\cdot,0)](\xi) }{\cF_{n-1}[u_\eps'(\cdot,0)](\xi) }
    =  \frac{\sqrt{2\pi}}{\nu_{1,\eps}(\xi)+\nu_{2,\eps}(\xi)}
    \matII{\i }{\i }{\nu_{2,\eps}(\xi)}{-\nu_{1,\eps}(\xi)}
    \vecII{g_{+,\eps}(\xi)}{g_{-,\eps}(\xi)}.
  \end{align}
  From this and $\cF_n^++\cF_n^- =\cF_n$  we get for $x\in\R^{n-1},y\in\R$
  \begin{align*}
    u_\eps(x,y)
    &= \cF_n^{-1}(\cF_n^+ u_\eps + \cF_n^-u_\eps)(x,y) \\
   &\stackrel{\eqref{eq:nD_transformed_equations}}= \cF_n^{-1} \left(
   \frac{\cF_n^+f(\xi,\eta)}{\eta^2-\nu_{1,\eps}(\xi)^2} + \frac{\cF_n^-f(\xi,\eta)}{\eta^2-\nu_{2,\eps}(\xi)^2} \right)(x,y)  \\
   &\quad -  \frac{1}{\sqrt{2\pi}}\cF_n^{-1} \left[\left( \frac{\i \eta}{\eta^2-\nu_{1,\eps}(\xi)^2} -
     \frac{\i \eta}{\eta^2-\nu_{2,\eps}(\xi)^2} \right) \cdot (\cF_n u_\eps)(\xi,0) 
     \right](x,y)   \\
   &\quad -  \frac{1}{\sqrt{2\pi}} \cF_n^{-1} \left[ \left( \frac{1}{\eta^2-\nu_{1,\eps}(\xi)^2} -
      \frac{1}{\eta^2-\nu_{2,\eps}(\xi)^2} \right) \cdot (\cF_nu_\eps)'(\xi,0)\right](x,y)  \\
   &= \cF_n^{-1} \left( \frac{\cF_n^+f}{|\cdot|^2- \mu_1^2-\i\eps}  +
   \frac{\cF_n^-f}{|\cdot|^2- \mu_2^2-\i\eps} \right)(x,y) 
    \\
   &\quad -  \frac{1}{2\pi} \cF_{n-1}^{-1}\left( \int_\R \left( \frac{\i \eta}{\eta^2-\nu_{1,\eps}(\xi)^2} -
     \frac{\i \eta}{\eta^2-\nu_{2,\eps}(\xi)^2} \right)\e{\i y\eta}\, \mathrm{d}\eta
     \cdot (\cF_n u_\eps)(\xi,0) \right)(x)\\
   &\quad  -  \frac{1}{2\pi} \cF_{n-1}^{-1}\left( \int_\R
   \left(\frac{1}{\eta^2-\nu_{1,\eps}(\xi)^2}-\frac{1}{\eta^2-\nu_{2,\eps}(\xi)^2}\right)\e{\i y\eta} \, \mathrm{d}\eta
    \cdot (\cF_nu_\eps)'(\xi,0)
    \right)(x) \\
   &= \cF_n^{-1} \left( \frac{\cF_n^+f}{|\cdot|^2-\mu_{1,\eps}^2} +
   \frac{\cF_n^-f}{|\cdot|^2-\mu_{2,\eps}^2} \right)(x,y) \\
   &\quad +  \frac{\sign(y)}{2} \cF_{n-1}^{-1}\left( \Big(\e{\i  |y|\nu_{1,\eps}}-\e{\i  |y|\nu_{2,\eps}} \Big)
   (\cF_n u_\eps)(\cdot,0)\right)(x)  \\
   &\quad  - \frac{\i }{2}  \cF_{n-1}^{-1}\left( \Big( \frac{\e{\i  |y|\nu_{1,\eps}}}{\nu_{1,\eps}} -
   \frac{\e{\i  |y|\nu_{2,\eps}}}{\nu_{2,\eps}} \Big) (\cF_nu_\eps)'(\cdot,0)\right)(x).
 \end{align*}
 Combining this identity with~\eqref{eq:formulaInitialvalues_nD} we
 find~\eqref{eq:formula_uesp_nD},\eqref{eq:def_mjeps}.
\end{proof}

 It will turn out useful to decompose the last lines of \eqref{eq:formula_uesp_nD} according to 
 \begin{align*}
    w_\eps(x,y) +\mathfrak{w}_\eps(x,y)+\mathfrak{W}_\eps(x,y)+W_\eps(x,y)
 \end{align*}
 where the small frequencies are collected in $w_{\eps}$, 
 the large ones in $W_{\eps}$ and the remaining intermediate ranges of frequencies are covered by the terms
 $\mathfrak{w}_{\eps}, \mathfrak{W}_{\eps}$. Formally,
  \begin{align*}
    w_\eps(x,y)&:=  \cF_{n-1}^{-1}\left(  \e{\i|y|\nu_{1,\eps}} (\ind_{|\cdot|\leq \mu_1} 
     m_{1,\eps}  g_{+,\eps} +\ind_{|\cdot|\leq \mu_1}  m_{2,\eps} g_{-,\eps})\right)(x) \\
    &\qquad + \cF_{n-1}^{-1}\left( \e{\i|y|\nu_{2,\eps}} (\ind_{|\cdot|\leq \mu_1} m_{3,\eps}  g_{+,\eps}
    +  \ind_{|\cdot|\leq \mu_2} m_{4,\eps} g_{-,\eps})   \right)(x),  \\
    \mathfrak{w}_\eps(x,y)
    &:=  \cF_{n-1}^{-1}\left(
     \e{\i |y|\nu_{1,\eps}} \ind_{\mu_1<|\cdot|\leq \mu_2}  m_{2,\eps} g_{-,\eps} +
        \e{\i |y|\nu_{2,\eps}} \ind_{\mu_1<|\cdot|\leq \mu_2}  m_{3,\eps} g_{+,\eps}\right)(x), \\
    \mathfrak{W}_\eps(x,y)
    &:=\cF_{n-1}^{-1}\left(  \e{\i|y|\nu_{1,\eps}} \ind_{\mu_1<|\cdot|\leq \mu_1+\mu_2} 
    m_{1,\eps}  g_{+,\eps} 
    + \e{\i|y|\nu_{1,\eps}} \ind_{\mu_2<|\cdot|\leq \mu_1+\mu_2}  m_{2,\eps}  g_{-,\eps}\right)(x) \\
    &\qquad + \cF_{n-1}^{-1}\left( 
     \e{\i|y|\nu_{2,\eps}}\ind_{\mu_2<|\cdot|\leq \mu_1+\mu_2}    m_{3,\eps} g_{+,\eps}
    + \e{\i|y|\nu_{2,\eps}}\ind_{\mu_2<|\cdot|\leq \mu_1+\mu_2} m_{4,\eps} g_{-,\eps}   \right)(x),  \\
    W_\eps(x,y)&:= \cF_{n-1}^{-1}\left(
    \e{\i |y|\nu_{1,\eps}} (\ind_{|\cdot|> \mu_1 + \mu_2}
    m_{1,\eps} g_{+,\eps}+ \ind_{|\cdot|>\mu_1 + \mu_2}m_{2,\eps} g_{-,\eps})\right)(x) \\
    &\qquad + \cF_{n-1}^{-1}\left(\e{\i |y|\nu_{2,\eps}} (\ind_{|\cdot|> \mu_1 + \mu_2}
     m_{3,\eps} g_{+,\eps} + \ind_{|\cdot|> \mu_1 + \mu_2} m_{4,\eps} g_{-,\eps}) \right)(x).
  \end{align*}
  In the following section we will state estimates for $w_{\eps},\mathfrak w_{\eps},\mathfrak
  W_\eps, W_{\eps}$ (see the
  Propositions~\ref{prop:smallfreq},~\ref{prop:interferencefreq},~\ref{prop:intermediatefreq},~\ref{prop:largefreq})
  that lead to the proof of Theorem~\ref{thm:1}. Before going on with this we compute the limit of
  $u_\eps$ as $\eps\searrow 0$.  The above representation formula for $u_\eps$ leads to the definition
  \begin{align} \label{eq:formula_u+}
    \begin{aligned}
    \big(\mathcal R(\lambda+\i 0)f\big)(x,y)
    &:= u_+(x,y) \\
    &:= \lim_{\eps\searrow 0} \cF_n^{-1} \left( \frac{\cF_n^+ f}{|\,\cdot\,|^2 -\mu_1^2-\i\eps} +
   \frac{\cF_n^-f}{|\,\cdot\,|^2-\mu_2^2-\i\eps} \right)(x,y) \\
   &\quad \:  
     + \cF_{n-1}^{-1} \left(
    \e{\i |y|\nu_1}  (m_1 g_+ +  m_2g_-)\right)(x) \\
   &\quad \:     
     + \cF_{n-1}^{-1} \left(
      \e{\i |y|\nu_2}  (m_3 g_++m_4g_-)\right)(x)
   \end{aligned}
  \end{align}
  where the limit in the first line will be a weak limit in $L^q(\R^n)$.
  As above, the last two lines of \eqref{eq:formula_u+} can be rewritten as
  \begin{align*}
     w(x,y) +\mathfrak{w}(x,y)+\mathfrak{W}(x,y)+W(x,y)
  \end{align*}
  where  $g_+(\xi)=\cF_n^+ f(\xi,-\nu_1(\xi))$, $g_-(\xi)=\cF_n^-f(\xi,\nu_2(\xi))$ and
  \begin{align} \label{def_wjWj}
    \begin{aligned}
    w(x,y)&:=  \cF_{n-1}^{-1}\left(  \e{\i|y|\nu_{1}} (\ind_{|\cdot|\leq \mu_1} 
     m_1  g_+ +\ind_{|\cdot|\leq \mu_1}  m_2 g_-)\right)(x) \\
    &\qquad + \cF_{n-1}^{-1}\left( \e{\i|y|\nu_{2}} (\ind_{|\cdot|\leq \mu_1} m_3  g_+
    +  \ind_{|\cdot|\leq \mu_2} m_4 g_-)   \right)(x),  \\
    \mathfrak{w}(x,y)
    &:=  \cF_{n-1}^{-1}\left(
     \e{\i |y|\nu_{1}} \ind_{\mu_1<|\cdot|\leq \mu_2}  m_2 g_- +
        \e{\i |y|\nu_{2}} \ind_{\mu_1<|\cdot|\leq \mu_2}  m_3 g_+\right)(x), \\
    \mathfrak{W}(x,y)
    &:=\cF_{n-1}^{-1}\left(  \e{\i|y|\nu_{1}} \ind_{\mu_1<|\cdot|\leq \mu_1+\mu_2} 
    m_1  g_+ 
    + \e{\i|y|\nu_{1}} \ind_{\mu_2<|\cdot|\leq \mu_1+\mu_2}  m_2  g_-\right)(x) \\
    &\qquad + \cF_{n-1}^{-1}\left( 
     \e{\i|y|\nu_{2}}\ind_{\mu_2<|\cdot|\leq \mu_1+\mu_2}    m_3 g_+
    + \e{\i|y|\nu_{2}}\ind_{\mu_2<|\cdot|\leq \mu_1+\mu_2} m_4 g_-   \right)(x),  \\
    W(x,y)&:= \cF_{n-1}^{-1}\left(
    \e{\i |y|\nu_{1}} (\ind_{|\cdot|> \mu_1 + \mu_2}
    m_1 g_++ \ind_{|\cdot|>\mu_1 + \mu_2}m_2 g_-)\right)(x) \\
    &\qquad + \cF_{n-1}^{-1}\left(\e{\i |y|\nu_{2}} (\ind_{|\cdot|> \mu_1 + \mu_2}
     m_3 g_+ + \ind_{|\cdot|> \mu_1 + \mu_2} m_4 g_-) \right)(x).
    \end{aligned}
  \end{align}
  Here,
   \begin{align} \label{eq:def_mj}
  \begin{aligned}
  m_1(\xi)
  &:= \frac{\i\sqrt{\pi/2}}{\nu_1(\xi)+\nu_2(\xi)}\cdot \left(\sign(y)-\frac{\nu_2(\xi)}{\nu_1(\xi)}\right),
  \\
  m_2(\xi) 
  &:= \frac{\i\sqrt{\pi/2}}{\nu_1(\xi)+\nu_2(\xi)}\cdot \left(1+\sign(y)\right),\\
  m_3(\xi)
  &:= \frac{\i\sqrt{\pi/2}}{\nu_1(\xi)+\nu_2(\xi)}\cdot \left(1-\sign(y)\right),\\
  m_4(\xi)
  &:= \frac{\i\sqrt{\pi/2}}{\nu_1(\xi)+\nu_2(\xi)}\cdot \left(-\sign(y)-\frac{\nu_1(\xi)}{\nu_2(\xi)}\right).
  \end{aligned}
\end{align}
Notice that in the case without a jump in the potential we have $\mu_1=\mu_2=:\mu,\nu_1\equiv \nu_2$ and
the formula \eqref{eq:formula_u+} simplifies to 
$$
  \big(\mathcal R(\lambda+\i 0)f\big)(x,y)
  = \lim_{\eps\searrow 0} \cF_n^{-1} \left( \frac{\cF_n f}{|\,\cdot\,|^2 -\mu^2-\i\eps}\right)(x,y)  
      \qquad\text{if }V_1=V_2 
$$
because of $m_1+m_3\equiv m_2+m_4\equiv 0$.
   At several places we shall need the estimates 
  \begin{align}\label{eq:mj_estimates}
    \begin{aligned}
    |m_1(\xi)|&\les (1+|\xi|)^{-1}|\nu_1(\xi)|^{-1} &&\qquad (\xi\in\R^{n-1}),\\ 
    |m_2(\xi)|+|m_3(\xi)|&\les (1+|\xi|)^{-1} &&\qquad (\xi\in\R^{n-1}), \\
    |m_4(\xi)| &\les (1+|\xi|)^{-1}|\nu_2(\xi)|^{-1}
    &&\qquad (\xi\in\R^{n-1}).
  \end{aligned}
  \end{align}

\section{Proof of Theorem~\ref{thm:1} and Corollary~\ref{cor:2}}  \label{sec:ProofThm}
 
  We first collect a few tools from Harmonic Analysis that we will need in our estimates. For $n \geq 3$, the
  first line in~\eqref{eq:formula_uesp_nD} and~\eqref{eq:formula_u+} may be analyzed with the aid of
  Guti\'{e}rrez' Limiting Absorption Principle~\cite[Theorem~6]{Gut_Nontrivial} 
  (see also \cite[Theorem~2.3]{KenRuSo_Uniform})
  for the Helmholtz equation with constant coefficients in $\R^n$. The corresponding result for the case $n=2$ was provided
  by~Ev\'{e}quoz~\cite[Theorem~2.1]{Eveq_plane}.  
  
  \begin{thm}[Guti\'{e}rrez, Ev\'{e}quoz] \label{thm:Gutierrez} 
    Assume $n\in\N, n\geq 2,(p,q)\in\mathcal D$ and $V\equiv V_1=V_2$. If $\lambda>V_1=V_2$ then the solutions
    $u_\eps$ of~\eqref{eq:complexnDNLH2} and $u_+, u_-:=\ov{u_+}$ from~\eqref{eq:formula_u+} satisfy 
    $$
      \|u_+\|_{L^q(\R^n)} + \|u_-\|_{L^q(\R^n)} 
      + \sup_{0<|\eps|\leq 1} \|u_\eps\|_{L^q(\R^n)} \les \|f\|_{L^p(\R^n)}.
    $$
  \end{thm}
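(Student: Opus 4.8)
The plan is to recognize Theorem~\ref{thm:Gutierrez} as a reformulation of the $L^p$--$L^q$ limiting absorption principle for the \emph{free} Helmholtz operator, so that the proof reduces to quoting and assembling the estimates of Kenig--Ruiz--Sogge and Guti\'errez (\cite[Theorem~2.3]{KenRuSo_Uniform}, \cite[Theorem~6]{Gut_Nontrivial}) for $n\geq 3$ and of Ev\'equoz (\cite[Theorem~2.1]{Eveq_plane}) for $n=2$. First I would put $\mu:=\sqrt{\lambda-V_1}=\sqrt{\lambda-V_2}>0$ and note that $u_\eps$ solving~\eqref{eq:complexnDNLH2} equals $\cF_n^{-1}\bigl(\cF_n f/(|\cdot|^2-\mu^2-\i\eps)\bigr)$, while in the constant-potential case $u_+$ from~\eqref{eq:formula_u+} collapses, by $m_1+m_3\equiv m_2+m_4\equiv 0$, to $\lim_{\eps\searrow0}\cF_n^{-1}\bigl(\cF_n f/(|\cdot|^2-\mu^2-\i\eps)\bigr)$, with $u_-=\ov{u_+}$. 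Hence the whole statement is a uniform-in-$\eps$ mapping property of the Fourier multiplier $m_\eps(\xi)=(|\xi|^2-\mu^2-\i\eps)^{-1}$, whose only genuine singularity sits, uniformly in $0<|\eps|\le 1$, in a fixed annular neighbourhood of the sphere $\Ss^{n-1}_\mu=\{|\xi|=\mu\}$.

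Next I would split $m_\eps=(1-\chi)m_\eps+\chi m_\eps$ for a fixed smooth cutoff $\chi\equiv 1$ near $\Ss^{n-1}_\mu$ with $\supp\chi\subset\{\mu/2<|\xi|<2\mu\}$. The ``regular'' symbol $(1-\chi)m_\eps$ I would further write as a compactly supported $C^\infty$ piece, uniformly bounded in every $C^k$ (so its convolution kernel is a fixed Schwartz function and the operator is bounded $L^p\to L^q$ for all $1\le p\le q\le\infty$ by Young's inequality), plus a piece agreeing with $m_\eps$ for $|\xi|\geq 2\mu$, which decays like $|\xi|^{-2}$ together with all its derivatives; comparison with the Bessel potential $(1-\Delta)^{-1}$ and the Hardy--Littlewood--Sobolev inequality make the latter bounded $L^p\to L^q$ precisely when $0\le\tfrac1p-\tfrac1q\le\tfrac2n$, the endpoint being forbidden when $n=2$ where it would force $p=1,q=\infty$. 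This accounts for the upper gap condition in $\mathcal D$.

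The substance is the singular operator $A_\eps:=\cF_n^{-1}(\chi m_\eps\,\cF_n\cdot)$, for which I would invoke the explicit Hankel-type kernel $R_{\mu^2+\i\eps}$ of the free resolvent: uniformly in $0<|\eps|\le 1$ it satisfies $|R(x)|\les|x|^{2-n}$ for $|x|\le 1$ (a logarithm if $n=2$) and $|R(x)|\les|x|^{-(n-1)/2}$ with oscillatory phase $\e{\i\mu|x|}$ for $|x|\ge 1$, and $\chi m_\eps$ isolates precisely the $|x|\gtrsim 1$ part, an oscillatory convolution with kernel of size $|x|^{-(n-1)/2}$. Its boundedness $L^p\to L^q$ on $\mathcal D$ is the Kenig--Ruiz--Sogge/Guti\'errez estimate: decompose the kernel dyadically in $|x|$, use stationary phase to relate each shell to a Fourier extension from $\Ss^{n-1}_\mu$, apply the Stein--Tomas restriction theorem and interpolate with the trivial $L^2\to L^2$ bound, obtaining a positive gain off the critical line $\tfrac1p-\tfrac1q=\tfrac2{n+1}$ and subject to $\tfrac1p>\tfrac{n+1}{2n}$, $\tfrac1q<\tfrac{n-1}{2n}$; summing the geometric series in the dyadic parameter closes these cases. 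I expect the main obstacle to be, as usual, the critical dual endpoint $\tfrac1p-\tfrac1q=\tfrac2{n+1}$ with $p=q'$ at the Stein--Tomas exponent, where the gain degenerates and one must instead run Stein's analytic interpolation with the family $z\mapsto\chi(\xi)(|\xi|^2-\mu^2-\i\eps)^{-z}$, $0\le\Real z\le 1$, using $L^2$ smoothing for small $\Real z$ and the sharp restriction bound at $\Real z=1$; this is the only point where restriction theory is genuinely needed, and for $n=2$ the companion endpoint $\tfrac1p-\tfrac1q=\tfrac2n$ truly fails, which is why $\mathcal D$ excludes it and one cites Ev\'equoz there.

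Finally, all of the above bounds being uniform in $0<|\eps|\le 1$ gives $\sup_{0<|\eps|\le1}\|u_\eps\|_{L^q(\R^n)}\les\|f\|_{L^p(\R^n)}$; and since the symbols $m_\eps$ converge pointwise a.e.\ to the boundary-value symbols $(|\xi|^2-\mu^2\mp\i0)^{-1}$ as $\eps\searrow0$ resp.\ $\eps\nearrow0$, I would check $u_\eps\to u_\pm$ first on the dense class $\mathcal S(\R^n)$ — by dominated convergence in the dyadic decomposition, or directly from $R_{\mu^2+\i\eps}\to R_{\mu^2\pm\i0}$ — and then conclude from the uniform bound that $u_\pm=\mathcal R(\lambda\pm\i0)f$ is well defined with $\|u_+\|_{L^q(\R^n)}+\|u_-\|_{L^q(\R^n)}\les\|f\|_{L^p(\R^n)}$ by Fatou / lower semicontinuity of the $L^q$-norm, the identity $u_-=\ov{u_+}$ being immediate from conjugating the symbol. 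This establishes the displayed estimate.
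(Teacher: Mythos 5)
The paper does not prove Theorem~\ref{thm:Gutierrez}; it is explicitly attributed to Guti\'errez~\cite{Gut_Nontrivial} and Kenig--Ruiz--Sogge~\cite{KenRuSo_Uniform} for $n\geq 3$ and to Ev\'equoz~\cite{Eveq_plane} for $n=2$, and is used throughout as a black box (it is also used, via Corollary~\ref{cor:Gutierrez}, as an ingredient in the interpolation argument of Theorem~\ref{thm:Tlambdaalpha}). Your sketch is a reasonable digest of the strategy those references actually follow — reduce to the Fourier multiplier $(|\xi|^2-\mu^2-\i\eps)^{-1}$, observe that the paper's own remark $m_1+m_3\equiv m_2+m_4\equiv 0$ collapses~\eqref{eq:formula_u+} to exactly that multiplier in the constant-potential case, split off a nonsingular part handled by Young/HLS, treat the singular annular part by an oscillatory-kernel dyadic decomposition together with Stein--Tomas restriction theory and Stein analytic interpolation, and finally pass to the limit $\eps\to 0$ on Schwartz data. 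So there is no genuine comparison to be made against a competing argument in the paper itself; what you have written is an outline of the cited literature.

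Two caveats in case you want to push the sketch towards a full proof. First, the ``trivial $L^2\to L^2$ bound'' you propose to interpolate against is false uniformly in $\eps$: the $L^2$ operator norm of $(-\Delta-\mu^2-\i\eps)^{-1}$ is exactly $1/\eps$. What one actually interpolates is an $L^2$ estimate for the individual dyadically localized pieces $K\cdot 1_{|x|\sim 2^j}$, where the spatial truncation restores a usable bound; summing the resulting geometric series is what produces the open conditions $\tfrac{1}{p}>\tfrac{n+1}{2n}$, $\tfrac{1}{q}<\tfrac{n-1}{2n}$. Second, the analytic-interpolation step is needed along the entire critical segment $\tfrac{1}{p}-\tfrac{1}{q}=\tfrac{2}{n+1}$ inside $\mathcal D$, not only at the self-dual Stein--Tomas point $p=q'$. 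Neither of these issues indicates that your plan is wrong — they are standard refinements — but they do mean the sketch as stated cannot be run verbatim at the two places where the estimates are genuinely sharp.
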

  
  More can be said about the qualitative properties of $u_+,u_-$, especially concerning their behaviour at
  infinity which is governed by an outgoing respectively incoming Sommerfeld radiation condition that even
  characterize these solutions of the Helmholtz equation, see for instance \cite[Corollary~1]{Gut_Nontrivial}
  in the case $n\geq 3$. As solutions of the Helmholtz equation~\eqref{eq:nDNLH}, the imaginary parts of
  $u_\pm$ are solutions of the homogeneous Helmholtz equation. Computations reveal (see for instance (5.6)
  in~\cite{Ruiz_LN}) that $\Imag(u_+)=-\Imag(u_-)$ is a multiple of the function
  $\cF_d^{-1}(\cF_d f\:\mathrm{d}\sigma_\mu)$ where $\mu=\sqrt{\lambda-V_1}=\sqrt{\lambda-V_2}>0$. This
  corresponds to a Herglotz wave given by the density $\cF_d f$ on $\Ss_\mu^{d-1}$. So Theorem~\ref{thm:Gutierrez} implies
  the following.
  
  \begin{cor} \label{cor:Gutierrez}
    For $n\in\N,n\geq 2$ and $(p,q)\in\mathcal D$  the linear operator
    $f\mapsto \cF_n^{-1}(\cF_n f\:\mathrm{d}\sigma_\mu)$ is  bounded from $L^p(\R^d)$ to $L^q(\R^d)$ for all $\mu>0$. 
  \end{cor}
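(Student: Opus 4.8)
The plan is to deduce Corollary~\ref{cor:Gutierrez} from Theorem~\ref{thm:Gutierrez} by identifying the operator $f\mapsto\cF_n^{-1}(\cF_n f\,\sigma_\mu)$, up to a nonzero constant, with the jump $\mathcal R(\lambda+\i 0)-\mathcal R(\lambda-\i 0)$ of the \emph{constant}-coefficient Helmholtz resolvent at the energy $\lambda=\mu^2$. Concretely, fix $\mu>0$ and apply Theorem~\ref{thm:Gutierrez} with $V\equiv V_1=V_2:=0$ and $\lambda:=\mu^2>0$, so that the sphere occurring in the limiting resolvents has radius $\sqrt{\lambda}=\mu$. For $f\in\mathcal S(\R^n)$ the functions $u_\pm$ from~\eqref{eq:formula_u+} reduce, since $m_1+m_3\equiv m_2+m_4\equiv 0$ when $V_1=V_2$, to $u_\pm=\lim_{\eps\searrow 0}\cF_n^{-1}\big(\cF_n f\,(|\cdot|^2-\mu^2\mp\i\eps)^{-1}\big)$, the limits being weak limits in $L^q(\R^n)$ as in Theorem~\ref{thm:Gutierrez}.

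Subtracting the two expressions and invoking Stone's formula -- equivalently the Sokhotski--Plemelj identity $(t-\i\eps)^{-1}-(t+\i\eps)^{-1}\to 2\pi\i\,\delta_0$ in $\mathcal S'(\R)$ together with the polar-coordinate identity $\delta(|\cdot|^2-\mu^2)=\tfrac{1}{2\mu}\,\sigma_\mu$ -- yields
$$
  u_+-u_-=c_\mu\,\cF_n^{-1}\big(\cF_n f\,\sigma_\mu\big)\qquad\text{with }c_\mu\neq 0,
$$
which is precisely the computation recorded in (5.6) of~\cite{Ruiz_LN}. Hence, by Theorem~\ref{thm:Gutierrez},
$$
  \big\|\cF_n^{-1}(\cF_n f\,\sigma_\mu)\big\|_{L^q(\R^n)}
  =|c_\mu|^{-1}\,\|u_+-u_-\|_{L^q(\R^n)}
  \le|c_\mu|^{-1}\big(\|u_+\|_{L^q(\R^n)}+\|u_-\|_{L^q(\R^n)}\big)
  \les\|f\|_{L^p(\R^n)}
$$
for every $f\in\mathcal S(\R^n)$. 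Since each $(p,q)\in\mathcal D$ satisfies $\tfrac1p>\tfrac{n+1}{2n}>0$, hence $p<\infty$, the space $\mathcal S(\R^n)$ is dense in $L^p(\R^n)$, so the densely defined operator $f\mapsto\cF_n^{-1}(\cF_n f\,\sigma_\mu)$ extends uniquely to a bounded operator $L^p(\R^n)\to L^q(\R^n)$; a scaling argument $x\mapsto\mu x$ shows moreover that its norm is a fixed power of $\mu$.

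The density step and the bookkeeping of normalizing constants are routine, so the only genuinely substantive ingredient beyond Theorem~\ref{thm:Gutierrez} is the identity $u_+-u_-=c_\mu\,\cF_n^{-1}(\cF_n f\,\sigma_\mu)$, i.e.\ the fact that (twice) the imaginary part of the outgoing solution is the Herglotz wave with density $\cF_n f|_{\Ss^{n-1}_\mu}$. This is the classical Stone-formula computation for $-\Delta$, borrowed from~\cite{Ruiz_LN}; the one point requiring care is that it is carried out at the level of Schwartz functions $f$, so that $\cF_n f$ genuinely restricts to the sphere and the two weak $L^q$-limits may be combined termwise -- no restriction-type estimate being needed for this step itself, since the restriction content of the statement is entirely absorbed into Theorem~\ref{thm:Gutierrez}.
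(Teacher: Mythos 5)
Your proof is correct and follows essentially the same route as the paper: the paper also deduces Corollary~\ref{cor:Gutierrez} from Theorem~\ref{thm:Gutierrez} by identifying (a multiple of) $\cF_n^{-1}(\cF_n f\,\mathrm{d}\sigma_\mu)$ with $\Imag(u_+)=-\Imag(u_-)$, i.e.\ with $u_+-u_-$ up to a constant, citing (5.6) of~\cite{Ruiz_LN} for exactly the Stone/Sokhotski--Plemelj computation you spell out. The only difference is cosmetic: you phrase the jump as $u_+-u_-$ while the paper speaks of $\Imag(u_\pm)$, and you include the explicit $\delta(|\cdot|^2-\mu^2)=\tfrac{1}{2\mu}\sigma_\mu$ reduction rather than referring to the literature.
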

  
  Another reference for this result and for the optimality of the asserted range can be found in \cite[Theorem 2.14]{KwonLee_Sharp}. 
  We will also need several Fourier restriction theorems for the Fourier
  transforms $\cF_n,\cF_{n-1}$ restricted to spheres in $\R^{n-1}$ respectively $\R^n$. We use $d$ as the
  dimensional parameter.  
   
  \begin{thm}[Stein-Tomas] \label{thm:SteinTomas}
    Let $d\in\N, d\geq 2$ and $1\leq p\leq \frac{2(d+1)}{d+3}$, $\mu>0$. Then 
    $$
      \|\cF_d f\|_{L^2(\Ss^{d-1}_\mu)} \les \mu^{\frac{d-1}{2}-\frac{d}{p'}}  \|f\|_{L^p(\R^d)},\qquad
      \|\cF_d(g\:\mathrm{d}\sigma_\mu)\|_{L^{p'}(\R^d)} \les \mu^{\frac{d-1}{2}-\frac{d}{p'}} \|g\|_{L^2(\Ss^{d-1}_\mu)}.
    $$
  \end{thm}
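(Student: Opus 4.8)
The plan is to peel off the dependence on $\mu$ by a scaling argument and reduce everything to the classical case $\mu=1$, which is the Stein--Tomas restriction theorem. First I would note that the two displayed inequalities are equivalent by duality: writing $R_\mu f:=\cF_d f|_{\Ss^{d-1}_\mu}$ for the restriction operator and $E_\mu g:=\cF_d(g\,\mathrm{d}\sigma_\mu)$ for the extension operator, a direct computation with the chosen normalisation of $\cF_d$ gives $R_\mu^* h=\cF_d^{-1}(h\,\mathrm{d}\sigma_\mu)$, which differs from $E_\mu h$ only by the reflection $x\mapsto -x$. Hence $R_\mu\colon L^p(\R^d)\to L^2(\Ss^{d-1}_\mu)$ and $E_\mu\colon L^2(\Ss^{d-1}_\mu)\to L^{p'}(\R^d)$ have the same operator norm, and it suffices to bound $E_\mu$.

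Next I would carry out the scaling. Given $g$ on $\Ss^{d-1}_\mu$, set $\tilde g(\omega):=g(\mu\omega)$ for $\omega\in\Ss^{d-1}_1$. Since $\mathrm{d}\sigma_\mu$ pulls back to $\mu^{d-1}\mathrm{d}\sigma_1$ under $\omega\mapsto\mu\omega$, the substitution $\eta=\mu\omega$ yields $\cF_d(g\,\mathrm{d}\sigma_\mu)(x)=\mu^{d-1}\cF_d(\tilde g\,\mathrm{d}\sigma_1)(\mu x)$. Taking $L^{p'}(\R^d)$-norms gives $\|\cF_d(g\,\mathrm{d}\sigma_\mu)\|_{L^{p'}}=\mu^{d-1-d/p'}\|\cF_d(\tilde g\,\mathrm{d}\sigma_1)\|_{L^{p'}}$, while the same change of variables on the sphere gives $\|\tilde g\|_{L^2(\Ss^{d-1}_1)}=\mu^{-(d-1)/2}\|g\|_{L^2(\Ss^{d-1}_\mu)}$. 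Combining these two identities with the estimate in the case $\mu=1$ produces precisely the asserted power $\mu^{(d-1)/2-d/p'}$, so I am reduced to proving $\|\cF_d(g\,\mathrm{d}\sigma_1)\|_{L^{p'}(\R^d)}\les\|g\|_{L^2(\Ss^{d-1}_1)}$ for $p'\geq\frac{2(d+1)}{d-1}$, equivalently $1\leq p\leq\frac{2(d+1)}{d+3}$.

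For this $\mu=1$ case I would run the standard $TT^*$ argument: the bound on $E_1$ is equivalent to the boundedness $E_1E_1^*\colon L^p(\R^d)\to L^{p'}(\R^d)$, and $E_1E_1^*$ is, up to a constant, convolution with $\cF_d(\mathrm{d}\sigma_1)$. Rather than interpolating this single kernel, I would embed the operator into Stein's analytic family $T_z h:=h*K_z$, where $K_z=c_z\,\cF_d\bigl((1-|\cdot|^2)_+^{\,z}\bigr)$ with $c_z$ an entire normalising factor chosen so that $T_0$ agrees with $E_1E_1^*$ up to a constant (at $z=0$ the compactly supported density degenerates to a multiple of the surface measure $\mathrm{d}\sigma_1$). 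On one vertical line the multiplier $(1-|\xi|^2)_+^{\,z}$ is bounded, hence $T_z\colon L^2\to L^2$ there with at most polynomial growth in $\Imag z$; on another vertical line the kernel $K_z$ lies in $L^\infty(\R^d)$ — which uses the stationary-phase asymptotics $|\cF_d(\mathrm{d}\sigma_1)(x)|\les(1+|x|)^{-(d-1)/2}$ and its analytic analogues for the densities $(1-|\cdot|^2)_+^{\,z}$ — hence $T_z\colon L^1\to L^\infty$ there. Stein's interpolation theorem for analytic families then gives $T_0\colon L^p\to L^{p'}$ at the height of $\tfrac1p-\tfrac1{p'}$ corresponding to $z=0$, which is exactly the endpoint $p'=\frac{2(d+1)}{d-1}$; the remaining range $p'>\frac{2(d+1)}{d-1}$ follows by a further interpolation against the trivial bound for the near-origin part of the kernel (or by monotonicity on frequency-localised pieces). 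The main obstacle is precisely this endpoint: a cruder approach — dyadically decomposing $\cF_d(\mathrm{d}\sigma_1)$ and real-interpolating between the $L^1\to L^\infty$ bound of the low-frequency piece and the $L^2\to L^2$ bounds of the dyadic tails — only recovers the open range $p<\frac{2(d+1)}{d+3}$, and reaching the closed endpoint genuinely requires the complex-interpolation device. The only other point needing care is the bookkeeping of the $(2\pi)$-constants and of the powers of $\mu$ in the scaling step, which is routine.
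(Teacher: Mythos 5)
The paper does not supply a proof of Theorem~\ref{thm:SteinTomas}; it is cited as a known result (Tomas, and \cite[p.386]{Stein_Harmonic}). So there is no internal argument to compare against, and the question is whether your outline is a sound reconstruction of the standard proof. Your duality observation and the scaling reduction to $\mu=1$ are both correct, and the power of $\mu$ comes out exactly as claimed from $\|\cF_d(g\,\mathrm d\sigma_\mu)\|_{L^{p'}}=\mu^{\,d-1-d/p'}\|\cF_d(\tilde g\,\mathrm d\sigma_1)\|_{L^{p'}}$ together with $\|\tilde g\|_{L^2(\Ss^{d-1}_1)}=\mu^{-(d-1)/2}\|g\|_{L^2(\Ss^{d-1}_\mu)}$.

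There is, however, a genuine error in how you describe the analytic family. You write that at $z=0$ the density $(1-|\xi|^2)_+^{z}$ ``degenerates to a multiple of the surface measure.'' It does not: $(1-|\xi|^2)_+^{0}=1_{B_1(0)}$ is the ball multiplier, not $\mathrm d\sigma_1$, and no bounded normalising constant $c_0$ can turn the ball multiplier into the extension--restriction operator. The surface measure arises at $z=-1$ after analytic continuation, in the $\Gamma$-normalised family $G_z(\xi):=(1-|\xi|^2)_+^{z}/\Gamma(z+1)$, which is entire in $z$ and satisfies $G_{-1}=c\,\delta_{\Ss^{d-1}}$ as a distribution. The strip one interpolates over is therefore $-\tfrac{d+1}{2}\le\Real z\le 0$: on $\Real z=0$ the multiplier is bounded, giving $L^2\to L^2$; on $\Real z=-\tfrac{d+1}{2}$ the kernel $\cF_d(G_z)$ is bounded (by the Bessel-function asymptotics you invoke), giving $L^1\to L^\infty$. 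The target $z=-1$ sits at the interpolation parameter $\theta=\tfrac{2}{d+1}$, which produces exactly $p=\tfrac{2(d+1)}{d+3}$ at the endpoint. With this corrected parameterisation the remainder of your argument goes through, and the rest of the sketch (the $TT^*$ reduction, the role of the stationary-phase bound $|\cF_d(\mathrm d\sigma_1)(x)|\les(1+|x|)^{-(d-1)/2}$, and the fact that dyadic real interpolation only recovers the open range) is accurate.
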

  
  The Stein-Tomas Theorem (see \cite{Tomas_Restriction} or \cite[p.386]{Stein_Harmonic}) is one particularly
  important estimate that embeds into a whole family of estimates. The Restriction Conjecture says that the
  estimates
  \begin{equation}\label{eq:RC}
       \|\cF_d h\|_{L^{q'}(\Ss^{d-1}_\mu)} \les \mu^{\frac{d-1}{q'}-\frac{d}{p'}} \|h\|_{L^p(\R^d)},\qquad
      \|\mathcal F_d(g\:\mathrm{d}\sigma_\mu)\|_{L^{p'}(\R^d)} \les
      \mu^{\frac{d-1}{q'}-\frac{d}{p'}} \|g\|_{L^q(\Ss^{d-1}_\mu)}.
  \end{equation}
  hold whenever $p'>\frac{2d}{d-1}, q \geq \left(\frac{d-1}{d+1} \: p' \right)'$. In the two-dimensional case
  $d=2$ the validity of~\eqref{eq:RC} is known since the 1970s
  \cite[Theorem~3]{Zyg_OnFourier},\cite[p.33-34]{Feff_Inequalities}. In the higher-dimensional case, however,
  the conjecture is still unsolved. Up to our knowledge, the strongest known result in this direction is due to Tao, see~\cite[Figure~3]{Tao} and \cite[p.1382]{Tao_Bilinear}.  
  
  \begin{thm}[Fefferman, Zygmund] \label{thm:RC}
    Let $d=2$ and  $p'>\frac{2d}{d-1},q \geq \left(\frac{d-1}{d+1} \: p' \right)',
    \mu>0$.  Then~\eqref{eq:RC} holds.   
  \end{thm}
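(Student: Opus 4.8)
This is the classical Fourier restriction theorem for the circle, and the plan is to run the Córdoba--Carleson--Sjölin argument, following Zygmund~\cite{Zyg_OnFourier} and Fefferman~\cite{Feff_Inequalities}. I first reduce. The rescaling $f\mapsto f(\mu^{-1}\,\cdot\,)$ relates the case of $\Ss^{d-1}_\mu$ to the unit sphere, the prefactor $\mu^{\frac{d-1}{q'}-\frac{d}{p'}}$ being forced by homogeneity of the $L^p$- and $L^{q'}$-norms, so I may take $\mu=1$. The two estimates in~\eqref{eq:RC} are adjoint to one another, hence it suffices to prove the extension estimate $\|\cF_2(g\,\mathrm d\sigma_1)\|_{L^{p'}(\R^2)}\les\|g\|_{L^q(\Ss^1_1)}$ in the range $p'>4$, $q\ge(p'/3)'$. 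A smooth finite partition of unity on the circle reduces this to the case where $g$ is supported on a short arc, which after a rotation is the graph $\{(t,\psi(t)):t\in I\}$ over a bounded interval $I$, with $\psi\in C^\infty$ and $|\psi''|$ bounded below; this is the only place where the geometry of the circle enters, via its nonvanishing curvature. One then has $\cF_2(g\,\mathrm d\sigma)(x)=c\int_I\e{\i(x_1t+x_2\psi(t))}\tilde g(t)\,\mathrm dt$ with $\|\tilde g\|_{L^q(I)}\sim\|g\|_{L^q}$.

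The core is an $L^4$-type estimate that converts $\psi''\neq0$ into a quantitative gain. Following Córdoba, set $h:=\cF_2(g\,\mathrm d\sigma)$; then $h^2=\cF_2\bigl((g\,\mathrm d\sigma)*(g\,\mathrm d\sigma)\bigr)$, so Plancherel gives $\|h\|_{L^4(\R^2)}^2=\|(g\,\mathrm d\sigma)*(g\,\mathrm d\sigma)\|_{L^2(\R^2)}$, and in the substitution $(s,t)\mapsto(s+t,\psi(s)+\psi(t))$ the Jacobian equals $\psi'(s)-\psi'(t)$, which is comparable to $s-t$; a Littlewood--Paley decomposition in the separation scale $|s-t|$ (equivalently, in the spatial frequency) then yields a local $L^4$-restriction estimate on balls $B_R$ with only an $R^\eps$ loss. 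Equivalently one may appeal to the Carleson--Sjölin oscillatory-integral theorem: after decomposing $|x|\sim 2^k$ and renormalising, the phase $x\cdot(t,\psi(t))$ satisfies the Carleson--Sjölin nondegeneracy condition $\det\!\big((1,\psi'),(0,\psi'')\big)=\psi''\neq0$, the relevant wave packets are $2^{k/2}\times 2^k$ planks of bounded overlap, and the $L^4$ oscillatory bound controls each scale. It is precisely here that $p'>4$ is essential: at $p'=4$ the summation over scales (equivalently the removal of the $R^\eps$ loss) is only logarithmically divergent, matching the known failure of the endpoint.

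Passing from the local $L^4$-restriction estimate to global $L^{p'}$ bounds by an $\eps$-removal argument, and weakening the sphere norm, one obtains $\|\cF_2(g\,\mathrm d\sigma_1)\|_{L^{p'}(\R^2)}\les\|g\|_{L^4(\Ss^1_1)}\les\|g\|_{L^\infty(\Ss^1_1)}$ for every $p'>4$; interpolating these with the trivial bound $\|\cF_2(g\,\mathrm d\sigma_1)\|_{L^\infty(\R^2)}\les\|g\|_{L^1(\Ss^1_1)}$ fills out the open part $\frac1q+\frac3{p'}<1$ of the asserted region, and Stein--Tomas (Theorem~\ref{thm:SteinTomas}) sits at $p'=6$ on the critical line. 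The remaining critical boundary face $\frac1q+\frac3{p'}=1$, being a supporting hyperplane of the region, cannot be reached by interpolation and requires a dedicated endpoint argument --- carried out in~\cite{Zyg_OnFourier,Feff_Inequalities} via complex interpolation of an analytic family of amplitudes with a Knapp-critical endpoint. The step I expect to be the obstacle is the curvature engine together with this endpoint: the plank-overlap counting in Córdoba's argument --- equivalently the transversality/Kakeya input behind the Carleson--Sjölin estimate --- is where all the real work is concentrated, and the complete execution is in the cited references.
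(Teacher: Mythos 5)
The paper offers no proof of Theorem~\ref{thm:RC}: it is stated as a classical result with references to Zygmund and Fefferman, so there is no internal argument to compare against and your proposal is automatically doing more than the paper does.

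Your sketch is a fair high-level picture of the classical argument for the circle --- duality, localisation to a graph, and a bilinear $L^4$ mechanism in which the Jacobian of $(s,t)\mapsto(s+t,\psi(s)+\psi(t))$ converts non-vanishing curvature into a quantitative gain, with $p'>4$ appearing as the natural threshold. Two points deserve correction, though. First, the interpolation: to fill the open region $\frac1q+\frac3{p'}<1$ you must interpolate the trivial estimate against the family $\|\cF_2(g\,\mathrm d\sigma)\|_{L^{p'}(\R^2)}\lesssim\|g\|_{L^4(\Ss^1)}$, $p'>4$ (and use H\"older on the compact circle when $q\ge4$). Your writeup weakens to $\|g\|_{L^\infty(\Ss^1)}$ before interpolating; the resulting segments in the $(1/p',1/q')$ Riesz diagram join $(0,0)$ to $(1/r,1)$ and so have slope $r>4$, reaching only the strictly smaller cone $\frac{1}{q'}>\frac{4}{p'}$ and missing the whole strip $\frac{3}{p'}\le\frac{1}{q'}\le\frac{4}{p'}$. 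Second, the endpoint structure: the boundary face $q=(p'/3)'$ does \emph{not} require a separate analytic-family argument in two dimensions. Squaring, Hausdorff--Young at exponent $(p'/2)'<2$ (here $p'>4$ enters), the change of variables with inverse Jacobian $\sim|s-t|^{-1}$, and one application of one-dimensional fractional integration already land exactly on $q=(p'/3)'$: with $r=(p'/2)'$ the HLS exponent $\frac{2r}{3-r}$ simplifies to $\frac{p'}{p'-3}=(p'/3)'$. Only $p'=4$ itself is excluded, and that endpoint is genuinely false. So the Carleson--Sj\"olin/wave-packet/$\eps$-removal machinery you invoke is heavier than what those sources use, and the ``dedicated complex-interpolation endpoint argument'' you describe is not where the obstruction sits. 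What you defer to the references --- the curvature-driven $L^4$ input --- is indeed the heart of the matter, and the paper defers it too, so the proposal is at the paper's level of rigor once those two inaccuracies are fixed.
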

  
   \begin{thm}[Tao] \label{thm:Tao}
    Let $d\in\N,d\geq 3$ and $p'> \frac{2(d+2)}{d}, q \geq \left(\frac{d-1}{d+1} \: p' \right)',
    \mu>0$.  Then~\eqref{eq:RC} holds. 
  \end{thm}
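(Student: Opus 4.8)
The plan is to deduce Theorem~\ref{thm:Tao} from Tao's sharp bilinear restriction estimate for the sphere by means of the bilinear-to-linear machinery of Tao--Vargas--Vega. Write $E_\mu g:=\cF_d(g\,\mathrm{d}\sigma_\mu)$ for the extension operator. First I would remove $\mu$ by the isotropic scaling $h\mapsto h(\mu^{-1}\,\cdot\,)$, $g\mapsto g(\mu^{-1}\,\cdot\,)$, which reduces both inequalities in \eqref{eq:RC} to the case $\mu=1$, the powers $\mu^{\frac{d-1}{q'}-\frac{d}{p'}}$ being exactly those forced by scaling. Since the restriction estimate $\|\cF_d h\|_{L^{q'}(\Ss^{d-1}_1)}\lesssim\|h\|_{L^p(\R^d)}$ is adjoint to the extension estimate $\|E_1 g\|_{L^{p'}(\R^d)}\lesssim\|g\|_{L^q(\Ss^{d-1}_1)}$, it suffices to prove the latter for all $p'>\tfrac{2(d+2)}{d}$ with $p'\ge\tfrac{d+1}{d-1}q'$.

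The analytic heart is the bilinear estimate: if $Q_1,Q_2\subset\Ss^{d-1}$ are caps of comparable radius $\rho$ whose centres are $\sim\rho$-separated, so that the unit normals along them are quantitatively transversal, then for every $r>\tfrac{d+2}{d}$, every $\eps>0$ and every ball $B_R\subset\R^d$ of radius $R\ge1$,
$$
  \|E_1 g_1\cdot E_1 g_2\|_{L^r(B_R)}\lesssim_\eps R^{\eps}\,\rho^{\kappa(d,r)}\,\|g_1\|_{L^2(\Ss^{d-1})}\|g_2\|_{L^2(\Ss^{d-1})},
$$
with a sharp power $\kappa(d,r)$ recording the dependence on the separation scale; this range of $r$ lies below the exponent $\tfrac{d+1}{d-1}$ that is already accessible from the linear Stein--Tomas estimate by Cauchy--Schwarz. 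This is Tao's theorem, which I would take as the key input. Its proof decomposes each $E_1 g_i$ into wave packets concentrated on tubes, runs an induction on the spatial scale $R$, and controls combinatorially how many mutually transversal tubes can overlap at a point, which is the source of the gain.

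With this at hand I would carry out a Whitney decomposition of the diagonal of $\Ss^{d-1}\times\Ss^{d-1}$: write $g=\sum_Q g_Q$ over a dyadic family of caps, expand $|E_1 g|^2=\sum_{Q_1,Q_2}E_1 g_{Q_1}\,\overline{E_1 g_{Q_2}}$, group the pairs by their dyadic angular separation, estimate the separated pairs by the bilinear inequality and the near-diagonal ones by frequency almost-orthogonality, and sum over scales (the factors $\rho^{\kappa(d,r)}$ making the series converge). This is precisely the Tao--Vargas--Vega reduction; interpolating the resulting bound with the trivial estimate $\|E_1 g\|_{L^\infty}\lesssim\|g\|_{L^1}$ to reach the sharp sphere exponent, it yields the localized estimate $\|E_1 g\|_{L^{p'}(B_R)}\lesssim_\eps R^\eps\|g\|_{L^q(\Ss^{d-1})}$ on the line $p'=\tfrac{d+1}{d-1}q'$ for $p'>\tfrac{2(d+2)}{d}$. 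Finally I would remove the $R^\eps$-loss by Tao's $\eps$-removal lemma to obtain the global bound, and cover the remaining (larger-$q$) exponents in \eqref{eq:RC} by interpolation with the Stein--Tomas estimate of Theorem~\ref{thm:SteinTomas}, whose endpoint $p'=\tfrac{2(d+1)}{d-1}$ sits on the conjectural line at sphere-exponent $2$, together with the trivial $L^1\to L^\infty$ bound.

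The main obstacle is unquestionably the bilinear estimate: the induction-on-scales scheme and the geometric control of overlapping transversal wave packets form the genuinely deep part of the argument, while scaling, duality, the Whitney decomposition, the $\eps$-removal and the concluding interpolations are comparatively routine packaging.
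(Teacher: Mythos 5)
The paper does not prove Theorem~\ref{thm:Tao}: it states it with attribution and cites \cite{Tao} and \cite{Tao_Bilinear} for the result, so there is no ``paper's own proof'' against which to compare in detail. Your outline is a faithful reconstruction of the standard route that those references establish: reduce to $\mu=1$ by scaling, pass to the extension operator by duality, invoke Tao's sharp bilinear restriction estimate for transversal caps on the sphere in the range $r>\tfrac{d+2}{d}$ (with the gain in the separation parameter), run the Tao--Vargas--Vega Whitney decomposition to sum the bilinear pieces and obtain the linear $L^q(\Ss^{d-1})\to L^{p'}(B_R)$ estimate with an $R^\eps$ loss, remove the $\eps$ by the $\eps$-removal lemma, and fill in the remaining off-line exponents by interpolation with Stein--Tomas and the trivial $L^1\to L^\infty$ bound. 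The numerology you quote checks out (in particular $\tfrac{d+2}{d}<\tfrac{d+1}{d-1}$, so the bilinear gain is genuine beyond Stein--Tomas, and the resulting linear range is exactly $p'>\tfrac{2(d+2)}{d}$, $p'\geq \tfrac{d+1}{d-1}q'$). One small clarification worth recording: Tao's bilinear theorem is stated and proved for elliptic paraboloids, and its transfer to the sphere requires either a local parabolic rescaling argument or a direct adaptation of the wave-packet/induction-on-scales scheme to compact hypersurfaces with nonvanishing Gaussian curvature; both are standard but should be acknowledged rather than elided. Subject to that, your proposal is correct and, since it reproduces the cited proof rather than offering an alternative, there is nothing to contrast with the paper's (absent) argument.
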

  
  As a consequence, in our analysis we can use \eqref{eq:RC} for $p'>p_*(d), q \geq
  \left(\frac{d-1}{d+1} \: p' \right)'$. We recall $p_*(d)=q_*(d)'=\frac{2d}{d+1}$ in the case $d=2$
  and $p_*(d)=q_*(d)'=\frac{2(d+2)}{d+4}$ in the case $d\geq 3$.
  All other major technical results are contained in Theorem~\ref{thm:Tlambdaalpha_d=1} and
  Theorem~\ref{thm:Tlambdaalpha} that we presented in the Introduction. In view of the
  representation formula~\eqref{eq:formula_u+} and the following remarks we will demonstrate
  Theorem~\ref{thm:1} by a separate discussion for four different frequency regimes. 
  Our result for the small frequency parts are the
  following.

  \begin{prop} \label{prop:smallfreq}
    Let $n\in\N, n\geq 2$ and $\frac{1}{p}>\frac{1}{p_*(n)}$, $\frac{1}{q}<\frac{1}{q_*(n)}$,
    $\frac{1}{p}-\frac{1}{q}\geq \frac{2}{n+1}$. Then we have  
    $$
      \|w\|_{L^q(\R^n)}  
     + \sup_{0<|\eps|\leq 1}  \|w_{\eps}\|_{L^q(\R^n)} \les \|f\|_{L^p(\R^n)}.
    $$
    In particular, this holds for all $(p,q)\in \tilde{\mathcal D}$.
    If the Restriction Conjecture is true, then these estimates even hold whenever 
    $\frac{1}{p}>\frac{n+1}{2n},\frac{1}{q}<\frac{n-1}{2n},\frac{1}{p}-\frac{1}{q}\geq
    \frac{2}{n+1}$ and hence for all $(p,q)\in\mathcal D$.    
  \end{prop}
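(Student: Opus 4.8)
The plan is to recognize the small-frequency pieces $w_\eps$ and $w$ as compositions of three operators whose boundedness we already control: a ``restriction-type'' operator that reads off $\cF_n^\pm f$ on the sphere $\{\eta = \mp\nu_{j,\eps}(\xi)\}$, a bounded multiplication by the smooth symbols $m_{j,\eps}$, and an ``extension-type'' operator $\cF_{n-1}^{-1}(\e{\i|y|\nu_{j,\eps}}\,\cdot\,)$ that rebuilds an $\R^n$-function. The key point in the regime $|\xi|\leq \mu_1$ (resp.\ $|\xi|\leq\mu_2$) is that $\nu_{1,\eps}(\xi)$ and $\nu_{2,\eps}(\xi)$ stay real and bounded, uniformly in $0<|\eps|\leq 1$, so that the graph $\{(\xi,\mp\nu_{j,\eps}(\xi)):|\xi|\leq\mu_j\}$ is a compact piece of a smooth, uniformly curved hypersurface in $\R^n$ (a piece of the sphere $\Ss^{n-1}_{\mu_j}$, perturbed by $\eps$), and the function $y\mapsto \e{\i|y|\nu_{j,\eps}(\xi)}$ is exactly the extension operator associated with that surface. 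Thus, after a change of variables $\eta = \pm\nu_{j,\eps}(\xi)$ that is bi-Lipschitz on the relevant ball (with Jacobian comparable to the quantity in~\eqref{eq:estimate_nu}), $w_\eps$ is, up to bounded factors, a sum of operators of the form $f\mapsto \cF_{n-1}^{-1}\big(\e{\i|y|\nu_{j,\eps}}\,\chi(\xi)\,\cF_n^\pm f(\xi,\mp\nu_{j,\eps}(\xi))\big)$ with $\chi$ a bounded cutoff.

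The second step is to bound each such operator by factoring it through $L^2$ of the sphere. I would write it as $E_j^\eps \circ M \circ R_j^\eps$, where $R_j^\eps : L^p(\R^n)\to L^2(\Ss^{n-1}_{\mu_j})$ is (a perturbation of) the Stein--Tomas restriction operator applied to $f_\pm$, $M$ is multiplication by a uniformly bounded symbol, and $E_j^\eps : L^2(\Ss^{n-1}_{\mu_j})\to L^q(\R^n)$ is the adjoint extension. For $(p,q)\in\tilde{\mathcal D}$ — or $\mathcal D$ under the Restriction Conjecture — the condition $\frac1p>\frac1{p_*(n)}$ places $p$ in the range where the (conjectural) restriction estimate~\eqref{eq:RC} with $d=n$ applies to $R_j^\eps$, giving $L^p(\R^n)\to L^{q_0'}(\Ss^{n-1}_{\mu_j})$ for a suitable $q_0$; dually $\frac1q<\frac1{q_*(n)}$ handles $E_j^\eps$; and the gap condition $\frac1p-\frac1q\geq\frac2{n+1}$ is exactly what is needed so that the intermediate Lebesgue exponents on the sphere can be matched (using $L^{q_0'}(\Ss^{n-1}_{\mu_j})\hookrightarrow L^2(\Ss^{n-1}_{\mu_j})$ or Hölder on the compact sphere). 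Here one uses Theorem~\ref{thm:SteinTomas} together with Tao's Theorem~\ref{thm:Tao} (or Theorem~\ref{thm:RC} for $n=2$) to legitimize the exponents, and Theorem~\ref{thm:Gutierrez}/Corollary~\ref{cor:Gutierrez} as the clean template in the borderline selfdual case. Since $\Imag(\nu_{j,\eps})>0$ on the ball and $|\e{\i|y|\nu_{j,\eps}}|\leq 1$, no loss in $\eps$ occurs; in fact the decaying exponential only helps.

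The main obstacle — and the reason this is a Proposition rather than a one-line corollary of Theorem~\ref{thm:Gutierrez} — is the $\eps$-uniformity and the fact that the relevant surface is not literally a sphere but the graph of $\nu_{j,\eps}$, which degenerates (vertical tangent, vanishing curvature in the radial direction) as $|\xi|\to\mu_j$. One must check that the restriction/extension estimates survive this perturbation uniformly in $\eps$: I expect to do this by writing $\nu_{j,\eps}(\xi)^2 = \mu_j^2-|\xi|^2+\i\eps$, splitting the ball $\{|\xi|\leq\mu_j\}$ into a core $\{|\xi|\leq\mu_j-\delta\}$ where the change of variables is uniformly smooth and the standard restriction theory applies verbatim, and a collar $\{\mu_j-\delta<|\xi|\leq\mu_j\}$ of small measure where one argues by a crude $L^p\to L^2\to L^q$ estimate using Plancherel in $\xi$ and the bounds~\eqref{eq:estimate_nu}, \eqref{eq:mj_estimates} — the smallness of the collar compensating for the worse constants. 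Matching this decomposition across all four terms $m_{1,\eps}g_{+,\eps},\dots,m_{4,\eps}g_{-,\eps}$ and keeping track of the correct spheres ($\Ss^{n-1}_{\mu_1}$ vs.\ $\Ss^{n-1}_{\mu_2}$, which is why the cutoffs in $w_\eps$ are $\ind_{|\cdot|\leq\mu_1}$ for three terms and $\ind_{|\cdot|\leq\mu_2}$ for the last) is the bookkeeping that makes the argument longer than its essential content. Passing to the limit $\eps\searrow 0$ to get the bound for $w$ is then routine: the symbols $m_{j,\eps}\to m_j$ and $\nu_{j,\eps}\to\nu_j$ pointwise with uniform bounds, so weak-$*$ compactness of the uniformly bounded operators $\mathcal R(\lambda+\i\eps)$ on the reflexive pair $L^p\to L^q$ yields the estimate for the limit.
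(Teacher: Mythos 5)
Your high-level idea --- factor the small-frequency pieces as ``restriction of $\cF_n^\pm f$ to a sphere, multiply by a bounded symbol, apply the Fourier extension operator'' --- is the right one and is what the paper does. But two things in your execution go wrong.

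First, there is no degeneracy to fight, and the core/collar decomposition you propose is neither needed nor obviously workable. The graph $\{(\xi,\nu_j(\xi)):|\xi|\leq\mu_j\}$ is \emph{exactly} the upper hemisphere of $\Ss^{n-1}_{\mu_j}$; the apparent singularity at $|\xi|=\mu_j$ is only a coordinate singularity. Passing from $\int_{|\xi|\leq\mu_j}\cdots\,\mathrm{d}\xi$ to a genuine surface integral $\int_{\Ss^{n-1}_{\mu_j}}\cdots\,\mathrm{d}\sigma_{\mu_j}$ introduces the factor $(1+|\nabla\nu_j(\xi)|^2)^{-1/2}$ in the density, and by~\eqref{eq:estimate_nu} this factor is comparable to $|\nu_j(\xi)|/(1+|\xi|)$; it therefore cancels the singularities $|m_1|\lesssim|\nu_1|^{-1}$, $|m_4|\lesssim|\nu_2|^{-1}$ from~\eqref{eq:mj_estimates}. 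After this one change of variables, the density on the sphere is plain bounded, and Tao's extension theorem (or Theorem~\ref{thm:RC} for $n=2$) applies on the full hemisphere, uniformly in $\eps$ because the graphs of $\Real\nu_{j,\eps}$ have Gaussian curvature bounded below independently of $\eps$ and $\Imag\nu_{j,\eps}>0$ only produces a damping factor $\leq 1$. By contrast, your proposed collar bound ``via Plancherel in $\xi$'' cannot control $\|w\|_{L^q(\R^n)}$: the multiplier $\e{\i|y|\nu_j(\xi)}$ has modulus one for $|\xi|<\mu_j$ (when $\eps=0$), so a Plancherel bound in $\xi$ only gives $L^\infty_y L^2_x$, and the decay in $y$ you need is exactly the oscillatory/extension input you would be trying to avoid on the collar.

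Second, the exponent bookkeeping is garbled. Factoring through $L^2(\Ss^{n-1}_{\mu_j})$ via Stein--Tomas, as you first propose, only reaches the selfdual endpoints $p\leq\frac{2(n+1)}{n+3}$, $q\geq\frac{2(n+1)}{n-1}$; since $\frac{1}{p_*(n)}<\frac{n+3}{2(n+1)}$, this does not cover all of $\tilde{\mathcal D}$. The paper instead takes $s:=\big(\tfrac{n-1}{n+1}q\big)'$, applies the extension bound of~\eqref{eq:RC} to pass from $L^s(\Ss^{n-1}_{\mu_j})$ to $L^q(\R^n)$ (legal since $\frac1q<\frac1{q_*(n)}$), and the restriction bound of~\eqref{eq:RC} to pass from $L^p(\R^n)$ to $L^s(\Ss^{n-1}_{\mu_j})$ (legal since $\frac1p>\frac1{p_*(n)}$ and $s'\geq\big(\tfrac{n-1}{n+1}p'\big)'$). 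The last inequality is precisely what the gap condition $\frac1p-\frac1q\geq\frac2{n+1}$ gives; you gesture at this with ``the intermediate Lebesgue exponents can be matched,'' but you should make this the explicit mechanism rather than an afterthought, and drop the $L^2$/Stein--Tomas intermediary.
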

  
  The proof of Proposition~\ref{prop:smallfreq} will be given in Section~\ref{sec:smallfreq}.  
  In Section~\ref{sec:interferencefreq}, we analyze the first of the two terms containing intermediate frequencies. We will prove the following result.
  
  \begin{prop} \label{prop:interferencefreq}
    Let $n\in \N,n\geq 2$ and  $\frac{1}{p}>\frac{1}{p_*(n)}$, $\frac{1}{q}<\frac{1}{q_*(n)}$,
    $\frac{1}{p}-\frac{1}{q}\geq \frac{2}{n+1}$. Then we have 
    $$
      \|\mathfrak{w} \|_{L^q(\R^n)}  
     + \sup_{0<|\eps|\leq 1}  \|\mathfrak{w}_{\eps}\|_{L^q(\R^n)} \les \|f\|_{L^p(\R^n)}.
    $$
    In particular, this holds for all $(p,q)\in \tilde{\mathcal D}$.
    If the Restriction Conjecture is true, then this estimate even holds for  
    $\frac{1}{p}>\frac{n+1}{2n},\frac{1}{q}<\frac{n-1}{2n},\frac{1}{p}-\frac{1}{q}\geq
    \frac{2}{n+1}$ and thus for all $(p,q)\in\mathcal D$.
  \end{prop}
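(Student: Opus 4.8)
The plan is to exploit the special geometry of the interference annulus $A:=\{\xi\in\R^{n-1}:\mu_1<|\xi|\le\mu_2\}$. On $A$ one has $|\xi|>\mu_1$, so $\nu_1(\xi)=\i(|\xi|^2-\mu_1^2)^{1/2}$ is purely imaginary and vanishes only at the inner radius, and $|\xi|\le\mu_2$, so $\nu_2(\xi)=(\mu_2^2-|\xi|^2)^{1/2}$ is purely real and vanishes only at the outer radius; in particular $|\nu_1(\xi)+\nu_2(\xi)|^2=(\mu_2^2-|\xi|^2)+(|\xi|^2-\mu_1^2)=\mu_2^2-\mu_1^2$ is constant, and for $0<\eps\le1$ still $\mu_2^2-\mu_1^2\le|\nu_{1,\eps}(\xi)+\nu_{2,\eps}(\xi)|^2\les1$ (using $(\Real\nu_{2,\eps}(\xi))^2\ge\mu_2^2-|\xi|^2$ and $(\Imag\nu_{1,\eps}(\xi))^2\ge|\xi|^2-\mu_1^2$). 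Hence $m_2,m_3,m_{2,\eps},m_{3,\eps}$ are smooth and bounded on $A$ uniformly in $\eps$ — there is no singularity to absorb, so the operator $T_{\lambda,\alpha}$ is not needed here. Moreover $|\e{\i|y|\nu_{1,\eps}(\xi)}|=\e{-|y|\Imag\nu_{1,\eps}(\xi)}\le\e{-|y|(|\xi|^2-\mu_1^2)^{1/2}}$ and $|\e{\i|y|\nu_{2,\eps}(\xi)}|\le1$ for all $0\le\eps\le1$, $\xi\in A$. These uniform bounds together with $\nu_{j,\eps}\to\nu_j$, $m_{j,\eps}\to m_j$ reduce the problem to $\eps=0$, i.e. to $\mathfrak{w}$, in a form that runs verbatim (or via dominated convergence) for $\mathfrak{w}_\eps$. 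I split $\mathfrak{w}=\mathfrak{w}^{(1)}+\mathfrak{w}^{(2)}$ into the term carrying $\e{\i|y|\nu_1}\ind_A m_2 g_-$ and the term carrying $\e{\i|y|\nu_2}\ind_A m_3 g_+$, note $\|f_\pm\|_{L^p(\R^n)}\le\|f\|_{L^p(\R^n)}$, and treat the two — which are handled by mutually dual arguments — separately.

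\emph{The term $\mathfrak{w}^{(2)}$.} Since it is even in $y$, it suffices to bound it on $\R^{n-1}\times(0,\infty)$, where $\e{\i|y|\nu_2}=\e{\i y\nu_2}$ and $\mathfrak{w}^{(2)}$ is, up to a constant, the adjoint restriction operator $\cF_n^{-1}(\rho\,\mathrm{d}\sigma_{\mu_2})$ restricted to the upper half-space, with $\rho$ supported on the cap $\Sigma:=\{(\xi,\nu_2(\xi)):\xi\in A\}\subset\Ss^{n-1}_{\mu_2}$ and $\rho$ pulling back under $\xi\mapsto(\xi,\nu_2(\xi))$ to a constant multiple of $(1+|\nabla\nu_2|^2)^{-1/2}m_3 g_+$. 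By Theorems~\ref{thm:SteinTomas},~\ref{thm:RC},~\ref{thm:Tao}, for a density exponent $s\ge(\tfrac{n-1}{n+1}q)'$ (which on the asserted range may be taken $\le p'$, since there $q\ge\tfrac{n+1}{n-1}p$),
\[
 \|\mathfrak{w}^{(2)}\|_{L^q(\R^n)}\les\|\rho\|_{L^s(\Ss^{n-1}_{\mu_2})}=\Big(\int_A|m_3 g_+|^s(1+|\nabla\nu_2|^2)^{\frac{1-s}{2}}\,\mathrm{d}\xi\Big)^{1/s}\les\|g_+\|_{L^s(A)},
\]
using $m_3\in L^\infty$ and $(1+|\nabla\nu_2|^2)^{(1-s)/2}\le1$. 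Finally, since $g_+(\xi)=\cF_n^+f(\xi,-\nu_1(\xi))=(2\pi)^{-1/2}\int_0^\infty\e{-y(|\xi|^2-\mu_1^2)^{1/2}}\cF_{n-1}[f(\cdot,y)](\xi)\,\mathrm{d}y$ and $\ind_A(\xi)\e{-y(|\xi|^2-\mu_1^2)^{1/2}}\cF_{n-1}[f(\cdot,y)](\xi)=S_y[f(\cdot,y)](\xi)$ for the operator $S_\lambda$ of~\eqref{eq:def_Slambda} on $\R^{n-1}$ (radii $\mu_1<\mu_2$, $m\equiv1$), Minkowski's inequality and Theorem~\ref{thm:Slambdaalpha} (applicable because $p\le2$ and, for $s\le p'$, $1/s\ge1/p'$) give
\[
 \|g_+\|_{L^s(A)}\le(2\pi)^{-1/2}\int_0^\infty\|S_y[f(\cdot,y)]\|_{L^s(A)}\,\mathrm{d}y\les\int_0^\infty(1+y)^{\frac{2}{s'}-\frac2p-\beta}\|f(\cdot,y)\|_{L^p(\R^{n-1})}\,\mathrm{d}y,
\]
which by Hölder in $y$ is $\les\|f\|_{L^p(\R^n)}$ as soon as $(\tfrac{2}{s'}-\tfrac2p-\beta)p'<-1$; using the explicit $\beta$ from~\eqref{eq:def_beta} one checks that $s$ can be chosen in $[(\tfrac{n-1}{n+1}q)',p']$ so that this holds for all $(p,q)\in\tilde{\mathcal D}$ (resp.\ $\mathcal D$ under the Restriction Conjecture).

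\emph{The term $\mathfrak{w}^{(1)}$.} Dually, for fixed $y$ one has $\mathfrak{w}^{(1)}(\cdot,y)=S_{|y|}^*[m_2 g_-]$ with $S^*_\lambda$ from~\eqref{eq:def_Slambda} on $\R^{n-1}$ (radii $\mu_1<\mu_2$, $m\equiv1$), since $\e{\i|y|\nu_1}=\e{-|y|(|\cdot|^2-\mu_1^2)^{1/2}}$ on $A$. Now $g_-(\xi)=\cF_n^-f(\xi,\nu_2(\xi))$ is an honest Fourier restriction of $f_-$ to the cap $\Sigma$, so Theorems~\ref{thm:SteinTomas},~\ref{thm:RC},~\ref{thm:Tao} give $\|g_-\|_{L^{s'}(A)}\le\|\cF_n f_-\|_{L^{s'}(\Ss^{n-1}_{\mu_2})}\les\|f\|_{L^p(\R^n)}$ whenever $s'\le\tfrac{n-1}{n+1}p'$ (the Jacobian $(1+|\nabla\nu_2|^2)^{1/2}\ge1$ only helps), while Theorem~\ref{thm:Slambdaalpha} in its dual form $S^*_\lambda\colon L^{s'}(A)\to L^{p'}(\R^{n-1})$, used with the role of ``$p$'' played by $q'$ (so the target is $L^q$, valid since $q\ge2$, and requiring $s'\ge q'$), gives $\|\mathfrak{w}^{(1)}(\cdot,y)\|_{L^q(\R^{n-1})}\les(1+|y|)^{\frac{2}{s'}-\frac{2}{q'}-\beta}\|g_-\|_{L^{s'}(A)}$. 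Raising to the $q$-th power and integrating,
\[
 \|\mathfrak{w}^{(1)}\|_{L^q(\R^n)}^q\les\|f\|_{L^p(\R^n)}^q\int_\R(1+|y|)^{q(\frac{2}{s'}-\frac{2}{q'}-\beta)}\,\mathrm{d}y,
\]
finite once $s'$ is taken in the non-empty interval $[q',\tfrac{n-1}{n+1}p']$ so that $q(\tfrac{2}{s'}-\tfrac{2}{q'}-\beta)<-1$; this is again verified against~\eqref{eq:def_beta}.

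\emph{Main obstacle.} The heart of the matter is precisely these two ``one checks'' steps: proving that the admissible intervals for the intermediate exponent ($s$, resp.\ $s'$) are non-empty and contain a value for which the power of $(1+|y|)$ produced by Theorem~\ref{thm:Slambdaalpha} is $y$-integrable, \emph{uniformly} over $\tilde{\mathcal D}$ (resp.\ $\mathcal D$). This is tightest near the self-dual corner — $p,q$ close to $2$, i.e.\ $n$ large — where all four candidates in the definition~\eqref{eq:def_beta} of $\beta$ must be weighed against the required lower bound, and where the shape of $\tilde{\mathcal D}$ (notably the constraint $\tfrac1p-\tfrac1q\ge\tfrac{2}{n+1}$) and of $\beta$ is used decisively. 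The one genuinely extra point in passing from $\mathfrak{w}$ to $\mathfrak{w}_\eps$ is that the oscillatory phase $\e{\i|y|\nu_{2,\eps}}$ then lives over the complex surface $\{(\xi,\nu_{2,\eps}(\xi))\}$ rather than over $\Sigma$, so the extension-operator identification above must be replaced by a uniform oscillatory-integral bound — e.g.\ by absorbing the nonnegative factor $\e{-|y|\Imag\nu_{2,\eps}}$ into the density before applying the restriction estimate, or by a dominated-convergence argument giving $\mathfrak{w}_\eps\to\mathfrak{w}$ weakly in $L^q(\R^n)$ with norms bounded uniformly in $\eps\in(0,1]$.
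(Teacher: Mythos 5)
Your proposal follows the paper's route in all essentials: decompose $\mathfrak w$ into the two pieces carrying $m_2 g_-$ and $m_3 g_+$, bound each by combining a Fourier extension (or restriction) estimate on a cap of $\Ss^{n-1}_{\mu_j}$ with Theorem~\ref{thm:Slambdaalpha} applied in the $y$-variable, and integrate via Minkowski/H\"older. The one structural difference is how the two pieces are handled. The paper isolates a single operator $Q_{\mathfrak m}$ (your $\mathfrak w^{(2)}$), proves $L^p\to L^q$ bounds for it (Lemma~\ref{lem:Q_bounds}), and then disposes of the other piece $\mathfrak w^{(1)}$ by observing that it is $Q^*_{\ov{m^*}}f$ and that the asserted region of exponents is self-dual, so the adjoint bound on $L^{q'}\to L^{p'}$ translates back to $L^p\to L^q$. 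You instead prove the bound for $\mathfrak w^{(1)}$ directly — which, as you can check by substituting $(p,q)\mapsto(q',p')$, is exactly the duality argument written out. Nothing is gained or lost by this; it is simply two exhibitions of the same estimate. Your explicit computation $|\nu_1(\xi)+\nu_2(\xi)|^2=(\mu_2^2-|\xi|^2)+(|\xi|^2-\mu_1^2)=\mu_2^2-\mu_1^2$ on the interference annulus, together with the $\eps$-uniform version, is a nice addition: the paper only implicitly uses that $m^*\in L^\infty(A)$ and gives no justification. A small slip: $\mathfrak w^{(2)}$ is not even in $y$ — since $m_3\propto 1-\sign(y)$ it vanishes identically for $y>0$ and is supported on $y<0$ — but as you end up estimating it on the reflected half-space anyway, the conclusion is unaffected.

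The genuine gap is the step you candidly flag as ``one checks.'' The whole burden of the proof in the paper is exactly the verification that the interval of admissible intermediate exponents $s$ is nonempty and contains a value with $\beta+\tfrac{3}{p}-\tfrac{2}{s'}-1>0$, for all $(p,q)$ in $\tilde{\mathcal D}$ (and, separately, for $\mathcal D$ under the Restriction Conjecture). Because $\beta$ in~\eqref{eq:def_beta} is a minimum of four expressions, each depending on $p$, $s$, $d=n-1$, and $p_*(d)$, the paper must verify each candidate separately — this occupies the bulk of the proof of Lemma~\ref{lem:Q_bounds} and is where the shape of $\tilde{\mathcal D}$ is used decisively. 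With the choice $s=\bigl(\tfrac{n-1}{n+1}q\bigr)'$ as in the paper these reduce to explicit rational inequalities in $\tfrac1p,\tfrac1q,n$ (including a delicate low-dimensional check in the fourth candidate), and without carrying them out the proposal does not establish the proposition. A secondary omission, also present in the paper's terse proof of this proposition but worth making precise: the uniform-in-$\eps$ bound for $\mathfrak w_\eps$ requires that the extension estimate be applied to the density $\e{-|y|\Imag\nu_{2,\eps}}m^*_\eps g_{+,\eps}$ on the genuine sphere $\Ss^{n-1}_{\mu_2}$ — the phase of $\e{\i|y|\Real\nu_{2,\eps}}$ then has the correct surface-measure form but the graph $\xi\mapsto\Real\nu_{2,\eps}(\xi)$ is only an $O(\eps)$-perturbation of the sphere, so one needs either a restriction estimate uniform over this family of hypersurfaces (as Proposition~\ref{prop:smallfreq} explains) or the dominated-convergence variant you sketch. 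Either way, a short but nonempty argument remains to be supplied.
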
  
  
  The estimates related to the second range of intermediate frequencies rely
  on Theorem~\ref{thm:Tlambdaalpha_d=1} $(n=2)$ and Theorem~\ref{thm:Tlambdaalpha} ($n\geq 3$). The proof is
  provided in Section~\ref{sec:intermediatefreq}.

  \begin{prop} \label{prop:intermediatefreq}
    Let $n\in\N, n\geq 2$ and $\frac{1}{p}>\frac{n+1}{2n}$,
    $\frac{1}{q}<\frac{n-1}{2n}$, $\frac{1}{p}-\frac{1}{q}\geq \frac{2}{n+1}$. Then we have
    $$
      \|\mathfrak{W}\|_{L^q(\R^n)}   
     + \sup_{0<|\eps|\leq 1}  \|\mathfrak{W}_{\eps}\|_{L^q(\R^n)}  \les \|f\|_{L^p(\R^n)}.
    $$
    In particular, this holds for all 
    $(p,q)\in\mathcal D$.   
  \end{prop}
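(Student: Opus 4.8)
The plan is to estimate $\mathfrak{W}$ and $\mathfrak{W}_\eps$ by recognizing each of their constituent pieces as an operator of the type $T_{\lambda,\alpha}$ (in the variable $\xi\in\R^{n-1}$, so $d=n-1$) composed with the restriction-type operator $S_\lambda$ encoding the trace $g_{\pm,\eps}(\xi)=\cF_n^\pm f(\xi,\mp\nu_{j,\eps}(\xi))$. Concretely, $\mathfrak{W}_\eps$ is a finite sum of terms of the form $\cF_{n-1}^{-1}(\e{\i|y|\nu_{j,\eps}}\,\mathbf{1}_{A}\, m_{k,\eps}\, g_{\pm,\eps})$, where $A$ is the annulus $\{\mu_2<|\xi|\le\mu_1+\mu_2\}$ (or $\{\mu_1<|\xi|\le\mu_1+\mu_2\}$) on which $|\xi|^2-\mu_j^2$ is bounded away from $0$ on the outer part but vanishes like a half-power at the inner radius $|\xi|=\mu_j$; writing $\e{\i|y|\nu_{j,\eps}}=\e{\i|y|(\mu_j^2-|\xi|^2+\i\eps)^{1/2}}$ we see the oscillation/decay in $y$, and the multipliers $m_{k,\eps}$ from~\eqref{eq:def_mjeps}, together with the bounds~\eqref{eq:mj_estimates}, contribute a factor comparable to $(1+|\xi|)^{-1}|\nu_j(\xi)|^{-2\alpha}$ with $\alpha\in\{0,\tfrac12\}$ depending on whether the factor $\nu_1^{-1}$ or $\nu_2^{-1}$ appears. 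Since on $A$ we have $|\xi|\sim 1$, the weight $(1+|\xi|)^{-1}$ is harmless and only the singular factor $(|\xi|^2-\mu_j^2)^{-\alpha}$ matters, which is exactly the symbol defining $T_{\lambda,\alpha}$ with $\lambda$ playing the role of $|y|$.

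First I would, for each $y\in\R$ fixed, split the operator $f\mapsto\mathfrak{W}_\eps(\cdot,y)$ as a composition $S_{|y|}^{(j)}$ (mapping $L^p(\R^n)\to L^{s}(A)$, producing $g_{\pm,\eps}$ restricted and weighted) followed by $T_{|y|,\alpha}$ (the $(n-1)$-dimensional annulus multiplier mapping $L^{s'}$-type data back to $L^q(\R^{n-1})$). Here one has to be careful: the trace map $f\mapsto g_{\pm,\eps}=\cF_n^\pm f(\xi,\mp\nu_{j,\eps}(\xi))$ is governed by the Fourier restriction estimates of Theorem~\ref{thm:Slambdaalpha}, whose $(1+\lambda)^{\cdot}$-decay in $\lambda=|y|$ must be tracked. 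Then I would integrate the resulting pointwise-in-$y$ bound $\|\mathfrak{W}_\eps(\cdot,y)\|_{L^q(\R^{n-1})}\les (1+|y|)^{\gamma+(\frac{2}{s'}-\frac 2p-\beta)}\|f\|_{L^p(\R^n)}$ over $y\in\R$, using Minkowski's inequality in the mixed norm $L^q_y L^q_x=L^q(\R^n)$, and observe that the total power of $(1+|y|)$ is strictly less than $-1/q\cdot q=-1$... more precisely one needs the exponent times $q$ to be $<-1$, equivalently the exponent $<-1/q$, so that $\int_\R (1+|y|)^{q(\cdots)}\,dy<\infty$. This is where~\eqref{eq:gamma_conditions} and~\eqref{eq:def_beta} are designed to interlock: the hypothesis $\frac1p-\frac1q\ge\frac{2}{n+1}=\frac{2}{d+2}$ in Proposition~\ref{prop:intermediatefreq} is precisely the one under which Theorem~\ref{thm:Tlambdaalpha} gives $\gamma\le 2\alpha-2+\frac1p-\frac1q$ (with strictness at $p=1$ or $q=\infty$), and adding the $S_\lambda$-exponent should produce a summable total power.

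The main obstacle I anticipate is the bookkeeping of exponents to verify that the combined $y$-decay is genuinely integrable, in particular handling the endpoint $\frac1p-\frac1q=\frac{2}{n+1}$ and the boundary exponents $p=1$ or $q=\infty$, where the non-strict inequalities become delicate and one must exploit the strict gains recorded in~\eqref{eq:gamma_conditions} and the $-\eps$'s in~\eqref{eq:def_beta}. A second technical point is that the hypotheses of Proposition~\ref{prop:intermediatefreq} are stated with $\frac1p>\frac{n+1}{2n}$, $\frac1q<\frac{n-1}{2n}$, i.e. the classical Guti\'errez range $\mathcal D$, which is \emph{larger} than $D_\alpha$ for $\alpha=\tfrac12$ in dimension $d=n-1$; I would need to check that even so, either the operators $\mathfrak W_\eps$ only involve $\alpha$-values and intermediate exponents $s$ for which Theorem~\ref{thm:Tlambdaalpha} still applies (choosing $s$ via interpolation/H\"older so that the pair $(s',q)$ lands in $D_{1/2}$ in $\R^{n-1}$), or else that the extra room coming from the harmless weight $(1+|\xi|)^{-1}$ on the compactly-supported annulus lets one absorb the discrepancy. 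Finally, once the bound for $\mathfrak W_\eps$ is obtained uniformly in $0<|\eps|\le 1$, the bound for the limit $\mathfrak W$ follows by the same estimates applied to the $\eps=0$ symbols $\nu_j$, $m_k$ from~\eqref{eq:nu},~\eqref{eq:def_mj}, together with dominated convergence to identify $\mathfrak W=\lim_{\eps\searrow 0}\mathfrak W_\eps$ weakly in $L^q$.
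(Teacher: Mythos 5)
Your proposal correctly identifies the $T_{\lambda,\alpha}$ machinery (with $d=n-1$ and $\alpha\in\{0,\tfrac12\}$) as the engine, but the decomposition you suggest is not the one the paper uses, and it has a genuine gap. The paper does \emph{not} compose $S_\lambda$ with a $T$-type operator here. Instead it unwinds the trace as an integral over the conjugate variable: on $\{|\xi|>\mu_1\}$ one has
$g_+(\xi)=\cF_n^+f(\xi,-\nu_1(\xi))=(2\pi)^{-1/2}\int_0^\infty \cF_{n-1}[f(\cdot,z)](\xi)\,\e{-z\sqrt{|\xi|^2-\mu_1^2}}\dz$,
so the two exponentials $\e{-|y|\sqrt{|\xi|^2-\mu_j^2}}$ and $\e{-z\sqrt{|\xi|^2-\mu_j^2}}$ fuse into $\e{-(|y|+|z|)\sqrt{|\xi|^2-\mu_j^2}}$, and for each fixed $(y,z)$ the inner operator is exactly $T_{|y|+|z|,\alpha}$ applied to the slice $f(\cdot,z)\in L^p(\R^{n-1})$. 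Theorem~\ref{thm:Tlambdaalpha} then yields $\|\mathfrak W(\cdot,y)\|_{L^q(\R^{n-1})}\les\int_\R (1+|y|+|z|)^\gamma\,\|f(\cdot,z)\|_{L^p(\R^{n-1})}\dz$ with $\gamma\le 2\alpha-2+\frac1p-\frac1q\le -1+\frac1p-\frac1q$, and the conclusion comes from Young's convolution inequality on $\R$ — the classical version when $p=1$ or $q=\infty$ (exploiting the strict gain $\gamma<-1+\frac1p-\frac1q$), the weak-$L^r$ version otherwise.

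The gap in your plan is twofold. First, $S_\lambda$ as defined in Theorem~\ref{thm:Slambdaalpha} requires $m\in L^\infty(A)$ and cannot absorb the singular factor $(|\xi|^2-\mu_j^2)^{-\alpha}$ coming from $\nu_j(\xi)^{-1}$ in $m_1,m_4$; that singularity must live in the $T$-part, so trying to produce a weighted trace via $S_{|y|}$ does not close. Second, and more decisively, your proposed pointwise-in-$y$ bound $(1+|y|)^{\gamma+(\frac{2}{s'}-\frac2p-\beta)}\|f\|_{L^p(\R^n)}$ collapses the $z$-dependence into the full $\|f\|_{L^p(\R^n)}$ too early; you then need the single exponent to beat $-1/q$, whereas the paper retains the convolution structure $(1+|y|+|z|)^\gamma\ast\|f(\cdot,z)\|_{L^p(\R^{n-1})}$, which is precisely what makes the endpoint $\frac1p-\frac1q=\frac{2}{n+1}$ (and $p=1$, $q=\infty$) tractable via weak Young. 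You are likely mixing this proposition up with Proposition~\ref{prop:interferencefreq} for $\mathfrak w$, which \emph{is} proved with $S_\lambda$. Finally, your concern that the hypotheses of the proposition do not land in $D_{1/2}$ is unfounded: for $n\ge 2$, $\frac{n+1}{2n}\ge\frac12+\frac{1}{4(n-1)}$ and $\frac{n-1}{2n}\le\frac12-\frac{1}{4(n-1)}$, while $\frac{2}{n+1}\ge\frac{1}{n}=\frac{2\cdot\frac12}{(n-1)+1}$, so the hypotheses $\frac1p>\frac{n+1}{2n}$, $\frac1q<\frac{n-1}{2n}$, $\frac1p-\frac1q\ge\frac{2}{n+1}$ already imply $(p,q)\in D_\alpha$ for $d=n-1$ and both $\alpha=0$ and $\alpha=\tfrac12$; the paper's proof simply records these elementary inequalities.
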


  The estimates for the large frequency parts $W,W_{\eps}$ are the easiest ones.
  The proof will be presented in Section~\ref{sec:largefreq}.  
 
  \begin{prop} \label{prop:largefreq}
	Let $n\in\N, n\geq 2$ and $1\leq p\leq 2\leq q\leq \infty$ with 
	$0 \leq \frac{1}{p} - \frac{1}{q} \leq \frac{2}{n}$ and 
	$\frac{1}{p} - \frac{1}{q}<\frac{2}{n}$ if $p=1$ or $q=\infty$.  Then we have 
	$$
  	  \|W\|_{L^q(\R^n)}+ \sup_{0<|\eps|\leq 1}  \norm{W_{\eps}}_{L^q(\R^n)}  
      \les \|f\|_{L^p(\R^n)}.
	$$
	In particular, this holds for all 
	$(p,q)\in\mathcal D$.  
  \end{prop}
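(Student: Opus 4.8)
The plan is to estimate, uniformly in $\eps$, each of the (at most eight, after splitting according to the sign of $y$) summands making up $W_\eps$ and $W$; the case $\eps=0$ yields $W$, and $\eps<0$ follows from $\eps>0$ by complex conjugation. Take for instance the summand $(x,y)\mapsto \cF_{n-1}^{-1}\bigl(\e{\i|y|\nu_{1,\eps}}\,\ind_{|\cdot|>\mu_1+\mu_2}\, m_{2,\eps}\, g_{-,\eps}\bigr)(x)$, and recall $g_{-,\eps}(\xi)=\cF_n^-f(\xi,\nu_{2,\eps}(\xi))=(2\pi)^{-1/2}\int_{-\infty}^0\cF_{n-1}[f(\cdot,y')](\xi)\,\e{-\i y'\nu_{2,\eps}(\xi)}\dy$. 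Thus this summand is the Fourier multiplier in the tangential variable $x\in\R^{n-1}$ with operator-valued symbol $\Sigma_\eps(\xi)\in\mathcal L\bigl(L^p(\R_{y'}),L^q(\R_y)\bigr)$ given by $(\Sigma_\eps(\xi)h)(y)=\ind_{|\xi|>\mu_1+\mu_2}\,m_{2,\eps}(\xi)\,\e{\i|y|\nu_{1,\eps}(\xi)}\int_{-\infty}^0 h(y')\,\e{-\i y'\nu_{2,\eps}(\xi)}\dy$. Since $\mu_1+\mu_2>\mu_2=\max_j\mu_j$, on the support $\{|\xi|>\mu_1+\mu_2\}$ the maps $\nu_{j,\eps}$ are smooth with $|\partial^\beta\nu_{j,\eps}(\xi)|\les(1+|\xi|)^{1-|\beta|}$ and $\Imag\nu_{j,\eps}(\xi)\ges 1+|\xi|$, uniformly in $\eps$; hence the fibre profiles $y\mapsto\e{\i|y|\nu_{1,\eps}(\xi)}$ and $y'\mapsto\ind_{y'<0}\e{-\i y'\nu_{2,\eps}(\xi)}$ decay exponentially, with $L^q(\R_y)$- resp. $L^{p'}(\R_{y'})$-norms $\les(1+|\xi|)^{-1/q}$ resp. $\les(1+|\xi|)^{-1/p'}$. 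Together with $|m_{2,\eps}(\xi)|\les(1+|\xi|)^{-1}$ from \eqref{eq:mj_estimates}, this gives $\|\Sigma_\eps(\xi)\|_{L^p(\R_{y'})\to L^q(\R_y)}\les(1+|\xi|)^{-2+\frac1p-\frac1q}\ind_{|\xi|>\mu_1+\mu_2}$ (the summands carrying $m_{1,\eps}$ or $m_{4,\eps}$ being even better by one power of $(1+|\xi|)^{-1}$). The only obstacle is the sharp cut-off $\ind_{|\xi|>\mu_1+\mu_2}$, which I would remove by fixing a radial $\psi\in C^\infty(\R^{n-1})$ with $\psi\equiv1$ on $\{|\xi|\ge\mu_1+\mu_2\}$ and $\supp\psi\subset\{|\xi|>\mu_2\}$, and writing $\ind_{|\xi|>\mu_1+\mu_2}=\psi(\xi)-\psi(\xi)\ind_{|\xi|\le\mu_1+\mu_2}$; this splits the summand into a ``good'' part $W_\eps^{\mathrm g}$, with the smooth symbol $\psi\,\Sigma_\eps$ of order $-2+\tfrac1p-\tfrac1q$, and a ``bad'' part $W_\eps^{\mathrm b}$, with symbol supported in the fixed compact annulus $\{\mu_2<|\xi|\le\mu_1+\mu_2\}$.

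For $W_\eps^{\mathrm g}$ I would establish the kernel bound $|K_\eps^{\mathrm g}(z,y,y')|\les_N(|z|+|y|+|y'|)^{2-n}\ind_{|z|+|y|+|y'|\le1}+(1+|z|+|y|+|y'|)^{-N}$ for every $N$, the first term being replaced by $\log\tfrac{2}{|z|+|y|+|y'|}\,\ind_{\le1}$ when $n=2$. This follows from a Littlewood--Paley decomposition of the (scalar, for fixed $y,y'$) symbol $\xi\mapsto\e{\i|y|\nu_{1,\eps}(\xi)}\e{-\i y'\nu_{2,\eps}(\xi)}m_{2,\eps}(\xi)\psi(\xi)$: on the annulus $|\xi|\sim2^k\ges\mu_1+\mu_2$ its $\beta$-th derivative is $\les2^{-k-k|\beta|}\e{-c(|y|+|y'|)2^k}$, because each $\xi$-derivative of $\e{\i|y|\nu_{1,\eps}}$ costs a factor $|y|\,|\nabla\nu_{1,\eps}|\les|y|$, absorbed into $2^{-k}\cdot(|y|2^k)\e{-c|y|2^k}\les2^{-k}\e{-c'|y|2^k}$. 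The standard non-stationary-phase estimate then yields $|\cF_{n-1}^{-1}(\cdot)^{(k)}(z)|\les_N2^{k(n-2)}\e{-c(|y|+|y'|)2^k}(1+2^k|z|)^{-N}$, and summation over $k$ gives the bound. Since $|z|+|y|+|y'|\ge|z|+|y-y'|\ge|(z,y-y')|$ and $2-n\le0$, the operator $W_\eps^{\mathrm g}$ is dominated, pointwise in its $\R^n$-kernel, by convolution with $w\mapsto|w|^{2-n}\ind_{|w|\le1}+(1+|w|)^{-N}$. The second kernel lies in $L^1(\R^n)$, while the first lies in $L^r(\R^n)$ for $r<\tfrac{n}{n-2}$ and is pointwise dominated by the Riesz kernel $|w|^{2-n}$. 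Hence Young's inequality (for $\tfrac1p-\tfrac1q<\tfrac2n$, including the cases $p=1$ or $q=\infty$) and the Hardy--Littlewood--Sobolev inequality (for $\tfrac1p-\tfrac1q=\tfrac2n$ with $1<p<q<\infty$) give $\|W_\eps^{\mathrm g}\|_{L^p(\R^n)\to L^q(\R^n)}\les1$ on exactly the asserted range; for $n=2$, where $\tfrac2n=1$, the $\log$-kernel lies in every $L^r$, $r<\infty$, and Young's inequality alone covers all admissible $(p,q)$.

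For $W_\eps^{\mathrm b}$ the operator-valued symbol is a uniformly bounded operator $L^2(\R_{y'})\to L^2(\R_y)$ (its $y$- and $y'$-profiles being $L^2$-normalised with norm $\les(1+|\xi|)^{-1/2}\les1$ on the annulus), so Plancherel gives $\|W_\eps^{\mathrm b}\|_{L^2(\R^n)\to L^2(\R^n)}\les1$. Moreover the kernel satisfies $|K_\eps^{\mathrm b}(z,y,y')|\les\e{-c(|y|+|y'|)}(1+|z|)^{-n/2}$, the decay $(1+|z|)^{-n/2}$ coming from the jump of the symbol across $\{|\xi|=\mu_1+\mu_2\}$ and the factor $\e{-c(|y|+|y'|)}$ from the lower bound on $\Imag\nu_{j,\eps}$ on the annulus. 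Since $(1+|z|)^{-n/2}\in L^\infty(\R^{n-1})\cap L^2(\R^{n-1})$ and $\e{-c|y|},\e{-c|y'|}\in L^r(\R)$ for every $r$, Young's inequality yields the bounds $L^1\to L^\infty$, $L^1\to L^2$ and $L^2\to L^\infty$; interpolating these three with the $L^2\to L^2$ bound fills the whole square $1\le p\le2\le q\le\infty$.

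Summing $W_\eps^{\mathrm g}+W_\eps^{\mathrm b}$ over the summands then proves Proposition~\ref{prop:largefreq}, uniformly in $\eps$. The constraint $\tfrac1p-\tfrac1q\le\tfrac2n$ and the strictness at $p=1$ or $q=\infty$ enter only through the Hardy--Littlewood--Sobolev step for $W_\eps^{\mathrm g}$; the actual work is the sharp frequency cut-off and the correct bookkeeping of the exponential gains in the $y,y'$ variables, after which — as claimed — these are the easiest among the four frequency regimes.
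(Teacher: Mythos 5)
Your proof is correct and uniform in $\eps$, and it establishes the same range of exponents as the paper, but it takes a genuinely different route. The paper's own argument is much shorter and entirely elementary: it applies the Hausdorff--Young inequality in the tangential variable $\xi$ for each fixed $y$, uses the crude pointwise bounds $|m_j(\xi)|\les(1+|\xi|)^{-1}$ and $\Imag\nu_{j}(\xi)\ges 1+|\xi|$ on $\{|\xi|>\mu_1+\mu_2\}$ together with Minkowski's and H\"older's inequalities, and then reduces the whole estimate to a \emph{one-dimensional} convolution $\|K\ast F\|_{L^q(\R)}$ in the $y$-variable alone, with $F(y)=\|f(\cdot,y)\|_{L^p(\R^{n-1})}$ and with an explicitly computable kernel $K$ that behaves like $w^{1-(n-1)(\frac1p-\frac1q)}$ near the origin and decays exponentially. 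The classical and weak versions of Young's inequality then deliver exactly the constraint $\frac1p-\frac1q\le\frac2n$ (strict at $p=1,q=\infty$). There is no frequency decomposition, no oscillatory integral estimate, and no stationary phase.

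Your approach instead works with the full $\R^n$-kernel. The key idea you add is the splitting $\ind_{|\xi|>\mu_1+\mu_2}=\psi-\psi\ind_{|\xi|\le\mu_1+\mu_2}$ into a smooth ``good'' symbol and a sharply cut ``bad'' symbol supported on a fixed compact annulus. For the good part you run a Littlewood--Paley / non-stationary-phase argument to obtain a genuine $n$-dimensional kernel bound of Bessel-potential type $|w|^{2-n}\ind_{|w|\le1}+(1+|w|)^{-N}$ (with a $\log$ at $n=2$), and conclude by Hardy--Littlewood--Sobolev and Young; for the bad part you combine Plancherel with a $(1+|z|)^{-n/2}\e{-c(|y|+|y'|)}$ kernel bound coming from the jump across the sphere, and interpolate. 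This is heavier machinery, but it buys conceptual clarity that the paper's short computation does not make visible: it shows that the condition $\frac1p-\frac1q\le\frac2n$ (with strictness at $p=1,q=\infty$) is precisely the Riesz-potential phenomenon arising from the symbol's order $-2$ decay, and that this part of the operator is in fact bounded on a range strictly larger than $1\le p\le2\le q\le\infty$; the restriction to $p\le2\le q$ is isolated in the sharp cutoff (the ball-multiplier obstruction), which your bad-part analysis handles on exactly that square. This cleaner separation of ``which condition comes from where'' is the real gain of your route, at the cost of considerably more work than the paper's Hausdorff--Young computation.
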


 \medskip

\textbf{Proof of Theorem~\ref{thm:1}.} 
From  Proposition~\ref{prop:ResolventFormulanD} and the representation
formulas~\eqref{eq:formula_uesp_nD},~\eqref{eq:formula_u+} we get 
\begin{align*}
  &\|u_+\|_{L^q(\R^n)} + \|u_-\|_{L^q(\R^n)}
  + \sup_{0<|\eps|\leq 1} \|u_\eps\|_{L^q(\R^n)}\ \\ 
  &\les \sup_{0<|\eps|\leq 1}  \norm{\cF_n^{-1} \left( \frac{\cF_n^+ f}{|\cdot|^2 -\mu_1^2-\i\eps} + 
   \frac{\cF_n^-f}{|\cdot|^2-\mu_2^2-\i\eps} \right)}_{L^q(\R^n)} \\
  &\quad +  \norm{w}_{L^q(\R^n)} + \norm{\mathfrak w}_{L^q(\R^n)} + \norm{\mathfrak W}_{L^q(\R^n)} 
  + \norm{W}_{L^q(\R^n)}  \\
  &\quad +    \sup_{0<|\eps|\leq 1} \left( \norm{w_\eps}_{L^q(\R^n)} + \norm{\mathfrak w_\eps}_{L^q(\R^n)} +
  \norm{\mathfrak W_\eps}_{L^q(\R^n)} + \norm{W_\eps}_{L^q(\R^n)}  \right).
\end{align*}
For all exponents $(p,q)\in \tilde{\mathcal D}$  the control of each of these
terms by the $L^p$-norm of the right hand side is a consequence of Theorem~\ref{thm:Gutierrez},
Proposition~\ref{prop:smallfreq}, Proposition~\ref{prop:interferencefreq},
Proposition~\ref{prop:intermediatefreq} and Proposition~\ref{prop:largefreq} because of $\tilde{\mathcal
D}\subset\mathcal D$.
If the Restriction Conjecture is true, then the same statement holds even all $(p,q)\in \mathcal D$. \qed

\bigskip       

\textbf{Proof of Corollary~\ref{cor:2}.} We briefly recall the dual variational technique for nonlinear
Helmholtz equations from~\cite{EvWe}. We aim at proving the existence of a real-valued function $u \in L^q(\R^n)$ satisfying 
\begin{align}\label{eq:nonlinear}
 -\Delta u +Vu - \lambda u =  \Gamma |u|^{q-2}u \qquad\text{in }\R^n 
\end{align}
in the distributional sense. In view of elliptic regularity theory any
distributional solution of such an equation will actually belong to $W^{2,r}_{\loc}(\R^n)$ for all $r\in
[1,\infty)$. Such solutions of the nonlinear PDE~\eqref{eq:nonlinear} will be obtained by solving the integral
equation $u  = K(\Gamma|u|^{q-2}u)$ where $K\phi:= \Real(\mathcal R(\lambda+\i 0)\phi)$ and $\mathcal R(\lambda+\i 0)$ has the mapping properties stated in
Theorem~\ref{thm:1}. We set $v:= \Gamma^{\frac{1}{q'}} |u|^{q-2}u$ and thus look for
 $v\in L^{q'}(\R^n)$ satisfying
$$
    |v|^{q'-2}v =  \Gamma^{\frac{1}{q}} K(\Gamma^{\frac{1}{q}} v).
$$
Since $K$ is symmetric, this equation has a variational structure. 
So we have to prove the existence of a nontrivial critical point of the functional 
$$
  I(v) := \frac{1}{q'} \int_{\R^n} |v|^{q'} - \frac{1}{2} \int_{\R^n} \big(\Gamma^{\frac{1}{q}}
  v\big) \left[ K\big(\Gamma^{\frac{1}{q}} v\big)\right]. 
$$ 
This functional has the Mountain Pass geometry, as we will explain and verify below. 
Moreover, exploiting $\Gamma\to 0$ at infinity, it satisfies the Palais-Smale
condition. This can be shown exactly as in \cite[Lemma~5.2]{EvWe} where the corresponding statement is proved in the special case $V_1=V_2$.
With these two ingredients we may apply the Mountain Pass Theorem \cite[Theorem~2.1]{AmbRab_dual} and obtain a nontrivial critical point $v$ of $I$. Transforming this function back according to $v= \Gamma^{\frac{1}{q'}} |u|^{q-2}u$, we get a nontrivial solution $u = \Gamma^{-\frac{1}{q}} |v|^{q'-2} v = K (\Gamma^{\frac{1}{q}} v) \in L^q(\R^n)$ of the nonlinear Helmholtz equation~\eqref{eq:NLH}.

\medskip

We now check that $I$ has the Mountain Pass geometry. First, by
 choice of $q$ in Corollary~\ref{cor:2}, the operator $\mathcal R(\lambda+\i 0):L^{q'}(\R^n)\to L^q(\R^n)$ is
bounded and thus  $K:L^{q'}(\R^n)\to L^q(\R^n)$ is bounded as well. Moreover, 
$$
  I(v) 
  \geq \frac{1}{q} \|v\|_{L^{q'}(\R^n)}^{q'} - \frac{1}{2} \|K\|_{L^{q'}(\R^n)\to L^q(\R^n)}
  \|\Gamma\|_{L^\infty(\R^n)}^{\frac{2}{q}} \|v\|_{L^{q'}(\R^n)}^2
$$
and $q'<2$ imply $I(0) = 0<\inf_{S_\varrho} I$ for some
sufficiently small $\varrho > 0$ where $S_\varrho$ denotes the sphere in $L^{q'}(\R^n)$ with radius $\varrho$. 
Finally, $I(tv)\to -\infty$ as $t\to\infty$ for some $v\in L^{q'}(\R^n)$, the proof of which will take the
remainder of this section. We adapt an idea from \cite[Section~3]{MaMoPe_Oscillating} and choose the ansatz $v=v_\delta$ where
\begin{equation}\label{eq:def_vdelta}
  v_\delta (x,y):= \Gamma(x,y)^{-\frac{1}{q}}w(x)\e{-y} 1_{(\delta,\infty)}(\Gamma(x,y)) 1_{(0, \infty)}(y)
  \qquad
  (x\in\R^{n-1},y\in\R,\delta> 0)
\end{equation} 
with sufficiently small $\delta > 0$ and with a nontrivial Schwartz function $w$ satisfying 
$\supp(\hat w) \subset \R^n \sm \ov{B_{\mu_2}(0)} = \{ \xi \in \R^{n-1}: |\xi|> \mu_2 \}$. Notice that
$v_\delta\in L^{q'}(\R^n)$ because of $\delta>0$ and 
$$
  \Gamma^{\frac{1}{q}}v_\delta \to f\quad\text{in }L^{q'}(\R^n) \quad \text{as } \delta \searrow 0
  \qquad\text{where }f(x,y)=w(x)\e{-y}1_{(0, \infty)}(y). 
$$  
Here we used $\Gamma>0$ on $\R^n$. So we find with the aid of Plancherel's theorem
\begin{align*}
  &\lim_{\delta \searrow 0} \int_{\R^n} \big(\Gamma^{\frac{1}{q}}
  v_\delta \big) \left[ K\big(\Gamma^{\frac{1}{q}} v_\delta \big)\right] \: \mathrm{d}(x,y) \\
  &= \int_{\R^n} f (Kf) \: \mathrm{d}(x,y) \\
  &= \Real\left(\int_{\R^n} \left(\mathcal R(\lambda+\i 0) f\right)\cdot f  \: \mathrm{d}(x,y)\right)\\
  &\stackrel{\eqref{eq:formula_u+}}= \Real\left(\int_{\R^n} \cF_n^{-1}\left(  
  \frac{\cF_n^+f  }{|\,\cdot\,|^2 -\mu_1^2-\i 0}  
  + \frac{\cF_n^+f  }{|\,\cdot\,|^2 -\mu_2^2-\i 0}
  \right)(x,y)\cdot f(x,y)\: \mathrm{d}(x,y) \right)\\
  &\quad +   \Real\left(\int_{\R^n}  \cF_{n-1}^{-1}\left(
   \e{\i|y|\nu_1}(m_1g_++m_2g_-)
   \right)(x) \cdot f(x,y) \: \mathrm{d}(x,y) \right) \\
  &\quad +   \Real\left(\int_{\R^n}  \cF_{n-1}^{-1}\left(
  \e{\i|y|\nu_1}(m_3g_++m_4g_-)
   \right)(x) \cdot f(x,y)
  \: \mathrm{d}(x,y) \right)\\
  &= \Real\left( \int_{\R^n} \frac{
  \cF_n^+f(\xi,\eta) \cdot \ov{\cF_n f(\xi,\eta)}}{|\xi|^2+\eta^2 -\mu_1^2-\i 0} + \frac{
  \cF_n^-f(\xi,\eta)\cdot \ov{\mathcal F_n f(\xi,\eta)}}{|\xi|^2+\eta^2-\mu_2^2-\i 0} \: \mathrm{d}(\xi,\eta) \right) 
  \\
  &\quad +  \Real\left(  \int_{\R^n}    
  \e{\i|y|\nu_1(\xi)}(m_1(\xi)g_+(\xi)+m_2(\xi)g_-(\xi)) \ov{\mathcal F_{n-1}[f(\cdot,y)](\xi)} \:
  \mathrm{d}(\xi,y) \right)  \\
  &\quad + \Real\left(     \int_{\R^n}  \e{\i|y|\nu_2(\xi)}(m_3(\xi)g_+(\xi)+m_4(\xi)g_-(\xi))
  \ov{\mathcal F_{n-1}[f(\cdot,y)](\xi)} \: \mathrm{d}(\xi,y) \right).
\end{align*}  
Inserting~\eqref{eq:def_vdelta} we get 
$$
   \cF_{n-1}[f(\cdot,y)](\xi)= \hat w(\xi)\e{-y}1_{(0, \infty)}(y),\quad
  \cF_n^+ f(\xi,\eta)=\cF_n f(\xi,\eta)= \frac{\hat w(\xi)}{1+\i\eta}, \quad \cF_n^-f\equiv 0.
$$ 
So our choice of $w$ implies 
$|\xi|^2+\eta^2\geq |\xi|^2>\mu_2^2>\mu_1^2$ for all $(\xi,\eta)\in \supp(\hat w)\times\R=\supp(\cF_n^+ f) =
\supp(\cF_n f)$. This has the following consequences:
\begin{itemize}
  \item[(i)] The principal value symbol $-\i 0$ can be omitted in the first two integrals.
  \item[(ii)] $\nu_j(\xi)=\i|\nu_j(\xi)|$ for $j=1,2$, see~\eqref{eq:nu}.
  \item[(iii)] $g_-(\xi)=\cF_n^-f(\xi,\nu_2(\xi))=0$ and $g_+(\xi)= \cF_n^+f(\xi,-\nu_1(\xi))=\frac{\hat
  w(\xi)}{1-\i\nu_1(\xi)}=\frac{\hat w(\xi)}{1+|\nu_1(\xi)|}$.
\end{itemize} 
Given that  $m_1(\xi)$ is real-valued and positive and
$m_3\equiv 0$  for $|\xi|\geq \mu_2>\mu_1,y>0$, see~\eqref{eq:def_mj} and (ii), this implies
\begin{align*}
  &\int_{\R^n} f (Kf) \: \mathrm{d}(x,y) \\ 
  &=  \int_{\R^n} \frac{
  |\cF_n f(\xi,\eta)|^2}{|\xi|^2+\eta^2 -\mu_1^2}\: \mathrm{d}(\xi,\eta)   
  +  \Real\left(  \int_{\R^n}    
  \e{\i|y|\nu_1(\xi)} m_1(\xi)g_+(\xi) \ov{\mathcal F_{n-1}[f(\cdot,y)](\xi)} \:
  \mathrm{d}(\xi,y) \right) \\
  &= \int_{\R^n} \frac{
  |\cF_n f(\xi,\eta)|^2}{|\xi|^2+\eta^2 -\mu_1^2}\: \mathrm{d}(\xi,\eta)   
  +  \int_{\R^{n-1}}  \frac{m_1(\xi) |\hat w(\xi)|^2}{1+|\nu_1(\xi)|}
   \left( \int_0^\infty \e{-(|\nu_1(\xi)|+1)y} \: \mathrm{d}y \right)  
    \: \mathrm{d}\xi    \\
   &= \int_{\R^n} \frac{
  |\cF_n f(\xi,\eta)|^2}{|\xi|^2+\eta^2 -\mu_1^2}\: \mathrm{d}(\xi,\eta)   +
  \int_{\R^{n-1}} \frac{m_1(\xi)|\hat w(\xi)|^2}{(1+|\nu_1(\xi)|)^2}\dxi >0.
\end{align*}  
 As a consequence, we obtain
$$
  \int_{\R^n} \big(\Gamma^{\frac{1}{q}} v_\delta \big) \left[ K\big(\Gamma^{\frac{1}{q}} v_\delta
  \big)\right] \: \mathrm{d}(x,y) >0 
$$
provided that $\delta>0$ is small enough. This finishes the proof of the Mountain Pass Geometry and the claim
is proved.
\qed

\section{Proof of Theorem~\ref{thm:Tlambdaalpha_d=1}} \label{sec:ProofTlambda_d=1}

  In this section we discuss the mapping properties of the operator $T_{\lambda,\alpha}$
  from~\eqref{eq:def_Tlambdaalpha} in the one-dimensional case $d=1$. Before proving the claims from
  Theorem~\ref{thm:Tlambdaalpha_d=1} on that matter, we provide two auxiliary results dealing with singular
  one-dimensional oscillatory integrals. We use the following well-known estimate.
  
  \begin{prop}[VIII.{$\S$}1.1.2 Corollary~\cite{Stein_Harmonic} on p.334]  \label{prop:Stein}
    Let $I\subset\R$ be an interval. Then we have for all $b\in W^{1,1}(I)$ the estimate 
    $$
      \left| \int_I \e{\i c \rho} b(\rho)\drho \right| 
      \les c^{-1} (|b(0)|+\|b'\|_{L^1(I)})
    $$
    with a constant independent of $I$ and $b$.
  \end{prop}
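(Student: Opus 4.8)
The plan is a single integration by parts that exploits the linearity of the phase: since $c\rho$ is linear one has $\e{\i c\rho}=(\i c)^{-1}\,\partial_\rho\e{\i c\rho}$. Write $I=(\rho_-,\rho_+)$ with $-\infty\le\rho_-<\rho_+\le+\infty$, and recall that $b\in W^{1,1}(I)$ has an absolutely continuous representative on $\overline I$ with $b'\in L^1(I)$, so that the boundary values $b(\rho_\pm)$ exist and $b$ vanishes at any infinite endpoint (otherwise the left-hand integral would not converge as an improper integral). Then I would integrate by parts to obtain
\begin{equation*}
  \int_I \e{\i c\rho}\,b(\rho)\drho
  = \frac{1}{\i c}\Big[\e{\i c\rho}b(\rho)\Big]_{\rho_-}^{\rho_+}
    - \frac{1}{\i c}\int_I \e{\i c\rho}\,b'(\rho)\drho.
\end{equation*}

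Next I would bound the two contributions. Since $|\e{\i c\rho}|=1$, the integral term is at most $c^{-1}\|b'\|_{L^1(I)}$. For the boundary term, the fundamental theorem of calculus gives, for every $x\in\overline I$,
\begin{equation*}
  |b(x)|\le |b(0)|+\Big|\int_0^x b'(t)\,\mathrm{d}t\Big|\le |b(0)|+\|b'\|_{L^1(I)},
\end{equation*}
where $0$ denotes the reference point appearing in the statement (understood to lie in $\overline I$; in the applications $I$ is a bounded interval with $0$ as an endpoint). Hence the boundary term is at most $c^{-1}\big(|b(\rho_-)|+|b(\rho_+)|\big)\le 2c^{-1}\big(|b(0)|+\|b'\|_{L^1(I)}\big)$, and adding the two estimates yields the claim with absolute constant $3$, manifestly independent of $I$ and $b$.

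The estimate is essentially textbook — it is the linear-phase special case of the classical non-stationary-phase lemma, Chapter VIII, \S1.1.2 of \cite{Stein_Harmonic} — so I do not expect any genuine obstacle, and the only points that need attention are routine. First, to justify the integration by parts for merely $W^{1,1}$ data one approximates $b$ in $W^{1,1}(I)$ by smooth functions (mollification, plus a cutoff when $I$ is unbounded) and passes to the limit, both sides depending continuously on $b$ in the $W^{1,1}$-norm; equivalently one invokes the product rule for absolutely continuous functions directly. Second, when $I$ is unbounded, convergence of $\int_I \e{\i c\rho}b(\rho)\drho$ at an infinite endpoint already forces $b\to 0$ there, so that endpoint contributes nothing to the boundary term. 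Beyond this bookkeeping there is nothing further to do.
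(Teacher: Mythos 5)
The paper does not prove this proposition; it is stated as a citation to Stein's book (Chapter VIII, \S1.1.2, Corollary on p.~334), so there is no internal proof to compare against. Your argument — a single integration by parts using $\e{\i c\rho}=(\i c)^{-1}\partial_\rho \e{\i c\rho}$, bounding the integral term by $c^{-1}\|b'\|_{L^1(I)}$ and the boundary term via $|b(x)|\le |b(0)|+\|b'\|_{L^1(I)}$ — is exactly the standard non-stationary-phase (linear phase) argument that underlies Stein's corollary, and it is correct, including your attention to the density argument justifying integration by parts for $W^{1,1}$ data and the vanishing of boundary contributions at infinite endpoints. Your reading of $b(0)$ as referring to a reference point in $\overline{I}$ is the right one: in the paper's only application (Proposition~\ref{prop:asymptotics_delta>0}) the interval is $(0,1)$ with $0$ an endpoint, and with that observation one could even sharpen your constant from $3$ to $2$, though this is immaterial. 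No gaps.
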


  \begin{prop} \label{prop:asymptotics_delta>0}
    Let $\delta\in (0,1)$ and $a\in C^1([0,1])$. Then the following holds for $c>0$: 
      \begin{align*}
       &\text{(i)}\qquad \left|\int_0^1  \e{\i c\rho} a(\rho) \rho^{-\delta} \:\mathrm{d}\rho\right|   
       \les  (1+c)^{\delta-1} (|a(0)|+\|a'\|_\infty).   \\ 
     &\text{(ii)}\qquad \left|\int_0^1  \e{\i c\rho} a(\rho) \rho^{-\delta} \:\mathrm{d}\rho
      -  a(0) c^{\delta-1}\int_0^\infty \e{\i \rho}\rho^{-\delta}\:\mathrm{d}\rho\right|  
     \les  c^{-1}\left(|a(0)|+\|a'\|_\infty \right).
   \intertext{In particular, there is $M>0$ independent of $a$ such that}     
    &\qquad\qquad  \left| \int_0^1 \e{\i c\rho} a(\rho)\rho^{-\delta}\drho\right|
     \gtrsim c^{\delta-1}|a(0)|
     \qquad\text{for }c \geq M(1+\|a'\|_\infty |a(0)|^{-1})^{1/\delta}.
     \end{align*}
  \end{prop}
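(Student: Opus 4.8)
The plan is to reduce both (i) and (ii) to the non-singular oscillatory integral estimate in Proposition~\ref{prop:Stein} by an integration-by-parts / comparison argument, handling the integrable singularity at $\rho=0$ separately for small $c$ and by rescaling for large $c$. For (i), I would split the analysis at the threshold $c=1$. When $0<c\leq 1$ the bound $(1+c)^{\delta-1}\simeq 1$ is trivial from $\bigl|\int_0^1 \e{\i c\rho} a(\rho)\rho^{-\delta}\drho\bigr|\leq \|a\|_\infty\int_0^1\rho^{-\delta}\drho = \|a\|_\infty/(1-\delta)$, and $\|a\|_\infty\les |a(0)|+\|a'\|_\infty$ since $a\in C^1([0,1])$. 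When $c\geq 1$, I would substitute $\rho = s/c$ to get $c^{\delta-1}\int_0^c \e{\i s} a(s/c)\, s^{-\delta}\, \mathrm{d}s$; then split $\int_0^c = \int_0^1+\int_1^c$. On $[0,1]$ the integrand is bounded by $\|a\|_\infty s^{-\delta}$, contributing $\les c^{\delta-1}\|a\|_\infty$. On $[1,c]$ the weight $s^{-\delta}$ is now a $C^1$ function with $\int_1^c |(s^{-\delta})'|\,\mathrm{d}s\leq \delta\int_1^\infty s^{-\delta-1}\mathrm{d}s<\infty$ bounded independently of $c$, so $b(s):=a(s/c)s^{-\delta}$ lies in $W^{1,1}([1,c])$ with $|b(1)|+\|b'\|_{L^1}\les \|a\|_\infty + c^{-1}\|a'\|_\infty \les |a(0)|+\|a'\|_\infty$; Proposition~\ref{prop:Stein} applied with frequency $c\geq 1$ then gives $\les c^{-1}(|a(0)|+\|a'\|_\infty)\les c^{\delta-1}(|a(0)|+\|a'\|_\infty)$. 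Adding the pieces yields (i).

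For (ii), the idea is to write $a(\rho)=a(0)+(a(\rho)-a(0))$ and treat the two resulting integrals. The remainder $\int_0^1 \e{\i c\rho}(a(\rho)-a(0))\rho^{-\delta}\drho$ has integrand of size $\les \|a'\|_\infty\,\rho^{1-\delta}$, which is $C^0$ and vanishes at $0$; more to the point, $\rho\mapsto (a(\rho)-a(0))\rho^{-\delta}$ is absolutely continuous on $[0,1]$ with derivative $a'(\rho)\rho^{-\delta}-\delta(a(\rho)-a(0))\rho^{-\delta-1}$, both terms being in $L^1$ with norm $\les \|a'\|_\infty$, so Proposition~\ref{prop:Stein} gives a bound $\les c^{-1}\|a'\|_\infty$. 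For the main term, $a(0)\int_0^1 \e{\i c\rho}\rho^{-\delta}\drho = a(0)c^{\delta-1}\int_0^c \e{\i s}s^{-\delta}\mathrm{d}s$ by the same substitution $\rho=s/c$, and I must compare this with $a(0)c^{\delta-1}\int_0^\infty \e{\i s}s^{-\delta}\mathrm{d}s$; the difference is $a(0)c^{\delta-1}\int_c^\infty \e{\i s}s^{-\delta}\mathrm{d}s$, and a single integration by parts (or Proposition~\ref{prop:Stein} on dyadic pieces, using that $\int_c^\infty |(s^{-\delta})'|\mathrm{d}s = c^{-\delta}$) shows $\bigl|\int_c^\infty \e{\i s}s^{-\delta}\mathrm{d}s\bigr|\les c^{-\delta}$ for $c\geq 1$, giving $\les |a(0)|c^{\delta-1}c^{-\delta}=|a(0)|c^{-1}$. (For $0<c\leq 1$ the asserted estimate in (ii) is again trivial since $c^{-1}\geq 1$ and the two integrals on the left are each $O(1)$, with $c^{\delta-1}\leq c^{-1}$.) Combining, the left side of (ii) is $\les c^{-1}(|a(0)|+\|a'\|_\infty)$, noting that $\int_0^\infty \e{\i s}s^{-\delta}\mathrm{d}s$ converges (absolutely near $0$, conditionally at $\infty$) to a nonzero constant $\Gamma(1-\delta)\e{\i\pi(1-\delta)/2}$.

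The lower bound is then a direct consequence of (ii): from (ii) and the reverse triangle inequality,
\begin{align*}
  \left|\int_0^1 \e{\i c\rho}a(\rho)\rho^{-\delta}\drho\right|
  &\geq |a(0)|\,c^{\delta-1}\left|\int_0^\infty \e{\i\rho}\rho^{-\delta}\drho\right|
   - C c^{-1}\bigl(|a(0)|+\|a'\|_\infty\bigr) \\
  &= c^{\delta-1}\Bigl( c_\delta |a(0)| - C c^{\delta(1-1/1)\cdots}\Bigr),
\end{align*}
where $c_\delta:=|\int_0^\infty \e{\i\rho}\rho^{-\delta}\drho|>0$; more cleanly, $c^{-1}=c^{\delta-1}\cdot c^{-\delta}$, so the error term is $C c^{\delta-1}c^{-\delta}(|a(0)|+\|a'\|_\infty)$, and it is dominated by $\tfrac12 c_\delta c^{\delta-1}|a(0)|$ once $c^{-\delta}(|a(0)|+\|a'\|_\infty)\leq \tfrac{c_\delta}{2C}|a(0)|$, i.e. once $c^\delta\geq \tfrac{2C}{c_\delta}(1+\|a'\|_\infty|a(0)|^{-1})$, which holds for $c\geq M(1+\|a'\|_\infty|a(0)|^{-1})^{1/\delta}$ with $M=M(\delta)$ depending only on $\delta$. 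This gives $\bigl|\int_0^1 \e{\i c\rho}a(\rho)\rho^{-\delta}\drho\bigr|\geq \tfrac12 c_\delta c^{\delta-1}|a(0)|\gtrsim c^{\delta-1}|a(0)|$ in that range.

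I expect the main technical obstacle to be bookkeeping the behaviour of the rescaled weight $s\mapsto s^{-\delta}$ on $[1,c]$ uniformly in $c$ — specifically, verifying that its total variation (equivalently $\|(s^{-\delta})'\|_{L^1([1,\infty))}$) is finite and $c$-independent, so that Proposition~\ref{prop:Stein} delivers a clean $c^{-1}$ (rather than $c^{\delta-1}$) decay for the "tail" piece; everything else is routine splitting and the convergence of the model integral $\int_0^\infty \e{\i\rho}\rho^{-\delta}\drho$.
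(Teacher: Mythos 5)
Your proof is correct and essentially follows the paper's route: both rely on Proposition~\ref{prop:Stein}, the rescaling $\rho\mapsto\rho/c$, and (for part (ii) and the lower bound) the split $a=a(0)+(a-a(0))$. The only difference is cosmetic and concerns (i): the paper subtracts $a(0)$ there as well and applies Proposition~\ref{prop:Stein} directly to the $W^{1,1}$ remainder $(a(\rho)-a(0))\rho^{-\delta}$ on $[0,1]$, whereas you rescale first and split the $s$-integral at $s=1$.

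One slip worth correcting in your write-up of (i). For the piece $c^{\delta-1}\int_1^c \e{\i s}a(s/c)s^{-\delta}\,\mathrm{d}s$ you claim that ``Proposition~\ref{prop:Stein} applied with frequency $c\geq 1$ gives $\les c^{-1}(|a(0)|+\|a'\|_\infty)$.'' After rescaling the phase is $\e{\i s}$, which oscillates with frequency $1$, so Proposition~\ref{prop:Stein} only yields $\bigl|\int_1^c \e{\i s}b(s)\,\mathrm{d}s\bigr|\les |b(1)|+\|b'\|_{L^1([1,c])}\les |a(0)|+\|a'\|_\infty$ with no additional $c^{-1}$ gain; multiplying by the prefactor $c^{\delta-1}$ then produces the correct target $c^{\delta-1}(|a(0)|+\|a'\|_\infty)$. (If instead you apply Proposition~\ref{prop:Stein} to the un-rescaled $\int_{1/c}^1\e{\i c\rho}a(\rho)\rho^{-\delta}\,\mathrm{d}\rho$ you do get a $c^{-1}$ factor, but the boundary term $|a(1/c)|c^{\delta}$ and the $L^1$-norm of the derivative then contribute $c^\delta$, so the net result is again $c^{\delta-1}$.) Relatedly, $\|b'\|_{L^1([1,c])}$ contains $c^{-1}\int_1^c s^{-\delta}\,\mathrm{d}s\les c^{-\delta}$, not $c^{-1}$; this is still uniformly bounded since $c^{-\delta}\leq 1$. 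These are expositional slips, not gaps, and the conclusion and method stand.
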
  
  \begin{proof}
    For $0<c\leq 1$ the estimate (i) is trivial. For $c>0$ we get from Proposition~\ref{prop:Stein} 
    \begin{align*}
    \left|\int_0^1  \e{\i c\rho} a(\rho) \rho^{-\delta} \:\mathrm{d}\rho \right| 
    &\les  \left|\int_0^1  \e{\i  c\rho}  (a(\rho)-a(0)) \rho^{-\delta} \drho\right|
      +   |a(0)| \left| \int_0^1  \e{\i  c\rho} \rho^{-\delta} \drho \right|   \\
    &\les  c^{-1} \int_0^1  |((a(\rho)-a(0))\rho^{-\delta})'| \drho
      +    |a(0)| c^{\delta-1} \left| \int_0^c  \e{\i \rho} \rho^{-\delta} \drho \right|  \\
    &\leq  c^{\delta-1} (|a(0)|+\|a'\|_\infty). 
   \end{align*}
  The estimate (ii) is similar. For $0<c\leq 1$ the estimate is trivial, while for $c>0$ we may exploit
  Proposition~\ref{prop:Stein} once more to get
  \begin{align*}
    &\left|\int_0^1  \e{\i  c\rho}  a(\rho) \rho^{-\delta} \:\mathrm{d}\rho
      -   a(0) c^{\delta-1}\int_0^\infty \e{\i \rho}\rho^{-\delta}\:\mathrm{d}\rho\right| \\
    &\leq  \left|\int_0^1  \e{\i c\rho} (a(\rho)-a(0)) \rho^{-\delta} \:\mathrm{d}\rho \right| 
    + |a(0)| \left|\int_1^\infty \e{\i c\rho }\rho^{-\delta}\:\mathrm{d}\rho\right|  \\
    &\les c^{-1} \int_0^1  |((a(\rho)-a(0))\rho^{-\delta})'| \drho   
     + c^{-1} |a(0)| \left(1+\int_1^\infty \rho^{-1-\delta}\:\mathrm{d}\rho\right) \\
    &\les c^{-1} \left(|a(0)|+\|a'\|_\infty\right).
 \end{align*}
   The second part of (ii) is a direct consequence of the first part since all constants incorporated in
   $\les$ are independent of $a$ and $c$.
  \end{proof}
  
  The above result allows to determine the exact asymptotics in the singular case $\delta\in (0,1)$ and in
  particular some lower bound for large $c$ that we will need in the construction of counterexamples. Similar
  but slightly different results can be obtained for $\delta=0$.
  
  \medskip
  
  \begin{prop} \label{prop:asymptotics_delta=0}
    Let $a\in C^1([0,1])$. Then the following holds for all $c>0$
    \begin{align*}
       &\text{(i)}\qquad \left|\int_0^1  \e{\i c\rho} a(\rho)   \:\mathrm{d}\rho\right|   
       \les  (1+c)^{-1} (|a(0)|+\|a'\|_\infty).   \\ 
     &\text{(ii)}\qquad \left|\int_0^1  \e{\i c\rho} a(\rho)   \:\mathrm{d}\rho
      -  \frac{a(1)\e{\i c}-a(0)}{\i c} \right| \les  (1+c)^{-2}\left(|a(0)|+|a'(0)|+\|a''\|_\infty\right)
      \quad\text{if }a\in C^2([0,1]).
   \end{align*} 
  \end{prop}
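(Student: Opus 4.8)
The plan is to treat the two claims of Proposition~\ref{prop:asymptotics_delta=0} separately, in each case by the same van der Corput / integration-by-parts strategy that was used for Proposition~\ref{prop:asymptotics_delta>0}, only now without the factor $\rho^{-\delta}$, which actually makes matters easier.

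For part (i): the estimate is trivial for $0<c\leq 1$ since $|\int_0^1 \e{\i c\rho}a(\rho)\drho|\leq \|a\|_\infty\leq |a(0)|+\|a'\|_\infty$ (using $\|a\|_\infty\leq |a(0)|+\|a'\|_{L^1([0,1])}\leq|a(0)|+\|a'\|_\infty$). For $c>1$ I would apply Proposition~\ref{prop:Stein} directly with $b=a$ on $I=[0,1]$: this gives $|\int_0^1\e{\i c\rho}a(\rho)\drho|\les c^{-1}(|a(0)|+\|a'\|_{L^1([0,1])})\les c^{-1}(|a(0)|+\|a'\|_\infty)$. Combining the two regimes and absorbing constants yields the bound with $(1+c)^{-1}$.

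For part (ii): here the main term $\frac{a(1)\e{\i c}-a(0)}{\i c}$ is exactly what one gets from a single integration by parts, $\int_0^1 \e{\i c\rho}a(\rho)\drho = \bigl[\frac{\e{\i c\rho}}{\i c}a(\rho)\bigr]_0^1 - \frac{1}{\i c}\int_0^1 \e{\i c\rho}a'(\rho)\drho = \frac{a(1)\e{\i c}-a(0)}{\i c} - \frac{1}{\i c}\int_0^1\e{\i c\rho}a'(\rho)\drho$. So the error equals $-\frac{1}{\i c}\int_0^1\e{\i c\rho}a'(\rho)\drho$, and since $a\in C^2$ we may bound this remaining oscillatory integral by part~(i) applied to $a'$ in place of $a$, obtaining $|\int_0^1\e{\i c\rho}a'(\rho)\drho|\les(1+c)^{-1}(|a'(0)|+\|a''\|_\infty)$. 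Hence the error is $\les c^{-1}(1+c)^{-1}(|a'(0)|+\|a''\|_\infty)\les (1+c)^{-2}(|a'(0)|+\|a''\|_\infty)$ for $c>1$; the $|a(0)|$ term on the right-hand side is only needed to cover the regime $0<c\leq 1$, where one checks directly that both the integral and the main term are $O(|a(0)|+|a'(0)|+\|a''\|_\infty)$ (e.g.\ $|a(1)\e{\i c}-a(0)|\leq |a(1)-a(0)|+|a(0)||\e{\i c}-1|\les \|a'\|_\infty c + |a(0)|c$, so dividing by $c$ stays bounded, and similarly the integral is bounded by $\|a\|_\infty$). Gluing the two regimes gives the claimed $(1+c)^{-2}$ bound.

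I do not expect any genuine obstacle here: the statement is a standard stationary-phase-type lemma and the proof is two integrations by parts plus the already-established Proposition~\ref{prop:Stein} and part~(i). The only mildly delicate point is bookkeeping the small-$c$ regime so that exactly the stated combination $|a(0)|+|a'(0)|+\|a''\|_\infty$ (and not, say, $\|a\|_\infty$) appears on the right; this is handled by the elementary Lipschitz bounds on $a$ indicated above, together with $\|a\|_\infty\les |a(0)|+\|a'\|_\infty\les|a(0)|+|a'(0)|+\|a''\|_\infty$.
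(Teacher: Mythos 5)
Your proof of part (i) and of the $c>1$ regime of part (ii) is essentially the same as the paper's: Proposition~\ref{prop:Stein} directly for (i), and a single integration by parts plus part~(i) applied to $a'$ for (ii). That part is fine.

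However, your bookkeeping for $0<c\leq 1$ in part (ii) contains a concrete error. You write $|a(1)-a(0)|\les\|a'\|_\infty c$, but the mean-value bound only gives $|a(1)-a(0)|\leq\|a'\|_\infty\cdot(1-0)=\|a'\|_\infty$, with no factor $c$. Consequently $\frac{|a(1)\e{\i c}-a(0)|}{c}$ is \emph{not} bounded uniformly as $c\to 0$ unless $a(1)=a(0)$, and your "dividing by $c$ stays bounded" does not hold. In fact, statement (ii) as printed with $(1+c)^{-2}$ is false for small $c$: take $a(\rho)=\rho$; then an explicit integration by parts gives
\begin{align*}
\int_0^1\e{\i c\rho}\rho\drho-\frac{\e{\i c}}{\i c}=\frac{\e{\i c}-1}{c^2}\sim\frac{\i}{c}\quad\text{as }c\to 0,
\end{align*}
whereas the asserted right-hand side stays bounded. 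The paper's own two-line proof likewise dismisses the case $0<c\le1$ as ``trivial,'' so the slip is inherited from the source, and presumably $(1+c)^{-2}$ should read $c^{-2}$ (after which your integration-by-parts argument for $c>1$, together with the trivially-true bound for $c\le1$, gives the result). You should be aware that the small-$c$ ``gluing'' step you wrote cannot be repaired as stated, because the claim itself fails there.
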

  \begin{proof}
    Part (i) is proved just as in the singular case, see Proposition~\ref{prop:asymptotics_delta>0}. 
    For $0<c\leq 1$ the estimate (ii) is trivial and for  $c>1$ we get via integration by parts and the
    estimate (i)
    \begin{align*}
    \left|\int_0^1  \e{\i c\rho} a(\rho)  \drho  -  \frac{a(1)\e{\i c}-a(0)}{\i c}  \right| 
    = c^{-1} \left|  \int_0^1  \e{\i  c\rho} a'(\rho)   \drho  \right|  
    \les c^{-2} (\|a''\|_\infty+|a'(0)|).  
   \end{align*}
  \end{proof}

  \medskip
  
  \noindent \textbf{Proof of Theorem~\ref{thm:Tlambdaalpha_d=1}:}
  We have to show that the estimate 
  \begin{equation}\label{eq:Testimate_d=1}
      \|T_{\lambda,\alpha} h\|_{L^q(\R)} \les (1+\lambda)^{2\alpha-\frac{2}{p}+\frac{2}{q}} \|h\|_{L^p(\R)}
  \end{equation}
  holds where $\alpha\in [0,1),\lambda\geq 0$ and $(p,q)\in \mathcal D_\alpha$, i.e.,
  $\frac{1}{p}-\frac{1}{q}\geq \alpha$, $\frac{1}{p}>\alpha$, $\frac{1}{q}<1-\alpha$. 
  Moreover, we will show that the range of exponents $p,q$ is optimal under these assumptions.
  
  \medskip
  
  For notational convenience we only consider the special case in Theorem~\ref{thm:Tlambdaalpha_d=1}
  where the annulus is $A=\{\xi\in\R : 1\leq |\xi|\leq 2\}$, so we consider (see~\eqref{eq:def_Tlambdaalpha})
  the operator
  \begin{equation*} 
     T_{\lambda,\alpha} h
      =  \cF_1^{-1}\left( 1_A(\cdot) \e{-\lambda\sqrt{|\cdot|^2-1}}
         (|\cdot|^2-1)^{-\alpha} m(|\cdot|) \cF_1 h(\cdot) \right).
  \end{equation*}
  
  \medskip
  
  We first present the comparatively easy proof for the case $1\leq p\leq 2\leq q\leq \infty,
  \frac{1}{p}-\frac{1}{q}\geq \alpha$ that only requires $m\in C([1,2])$. From the Hausdorff-Young
  inequality we get in the case $\frac{1}{p}-\frac{1}{q}> \alpha$
  \begin{align*}
    \|T_{\lambda,\alpha} h\|_{L^q(\R)} 
    &\les  \left\|\cF_1^{-1}\left( 1_A(\cdot) \e{-\lambda\sqrt{|\cdot|^2-1}}
      (|\cdot|^2-1)^{-\alpha} m(|\cdot|) \cF_1 h(\cdot) \right)\right\|_{L^q(\R)} \\
    &\les  \left\| \e{-\lambda\sqrt{|\cdot|^2-1}} (|\cdot|^2-1)^{-\alpha} m(|\cdot|) \cF_1 h(\cdot)
    \right\|_{L^{q'}(A)} \\
    &\les  \left\| \e{-\lambda\sqrt{|\cdot|^2-1}} (|\cdot|^2-1)^{-\alpha}
    m(|\cdot|)\right\|_{L^{\frac{pq}{q-p}}(A)} \|\cF_1 h \|_{L^{p'}(A)} \\
    &\les \|m\|_\infty \| \e{-\lambda\sqrt{|\cdot|^2-1}} (|\cdot|^2-1)^{-\alpha}
     \|_{L^{\frac{pq}{q-p}}(A)} \|\cF_1 h\|_{L^{p'}(\R)} \\
    &\les \|m\|_\infty \left( \int_1^2 \e{-\frac{pq}{q-p}\lambda\sqrt{r^2-1}} (r^2-1)^{-\frac{\alpha pq}{q-p}}
    \:\mathrm{d}r\right)^{\frac{q-p}{pq}} \|h\|_{L^p(\R)} \\
    &\les \|m\|_\infty \left( \int_0^1 \e{-\lambda\rho} \rho^{1-\frac{2\alpha pq}{q-p}}
    \:\mathrm{d}\rho\right)^{\frac{q-p}{pq}} \|h\|_{L^p(\R)} \\
    &\les \|m\|_\infty (1+\lambda)^{2\alpha-\frac{2}{p}+\frac{2}{q}} \|h\|_{L^p(\R)}.  
  \end{align*}
  The endpoint case $\frac{1}{p}-\frac{1}{q}=\alpha$ is achieved through complex interpolation. Since the
  procedure is almost the same as in the proof of Theorem~\ref{thm:Tlambdaalpha} below, we omit the details here and remark only that this strategy requires continuity of $m$.
    
  \medskip
  
  We continue with the proof of the full result under the assumption $m\in C^1([1,2])$. We use 
    \begin{align*}
      |T_{\lambda,\alpha}h(x)|
      &= \left|\cF_1^{-1}\left( 1_A(\cdot) \e{-\lambda\sqrt{|\cdot|^2-1}}(|\cdot|^2-1)^{-\alpha} m(|\cdot|)
          \cF_1 h\right)(x) \right|    \\
      &= \frac{1}{\sqrt{2\pi}}  \left| \int_A \e{\i\xi x} \e{-\lambda\sqrt{|\xi|^2-1}} (|\xi|^2-1)^{-\alpha}
      m(|\xi|) \cF_1 h(\xi)\:\mathrm{d}\xi \right|  \\
      &= \left| \int_{\R} K_\lambda(x-y) h(y)  \:\mathrm{d}y \right|  \\
    \text{where}\qquad
      K_\lambda(z)
      &:= \frac{1}{2\pi} \int_A \e{\i\xi z}  
      (|\xi|^2-1)^{-\alpha} m(|\xi|) \e{-\lambda\sqrt{|\xi|^2-1}} \dxi.   
    \end{align*}
    From this identity we get in the case $p\neq 1,q\neq \infty$
    \begin{align*}
      \|T_{\lambda,\alpha} h\|_{L^q(\R)}
      \les \|   K_\lambda \ast h \|_{L^q(\R)} 
      \les \|K_\lambda\|_{L^{\frac{pq}{pq+p-q},\infty}(\R)} \|h\|_{L^p(\R)}  
    \end{align*}
    so that we have to show
    \begin{equation}\label{eq:Klambda_weak_d=1}
      \|K_\lambda\|_{L^{\frac{pq}{pq+p-q},\infty}(\R)} 
      \les (1+\lambda)^{2\alpha-\frac{2}{p}+\frac{2}{q}} \qquad \text{if }p\neq 1,\;q\neq \infty.
    \end{equation}
    Similarly, in order to cover the cases $p=1$ or $q=\infty$ as well, we need to prove
    \begin{equation}\label{eq:Klambda_strong_d=1}
      \|K_\lambda\|_{L^{\frac{pq}{pq+p-q}}(\R)} 
      \les (1+\lambda)^{2\alpha-\frac{2}{p}+\frac{2}{q}} \qquad \text{if }p=1 \text{ or }q=\infty.
    \end{equation}
    
    \medskip
   
   The proof of~\eqref{eq:Klambda_weak_d=1},\eqref{eq:Klambda_strong_d=1} is based on 
   pointwise estimates for the kernel function $K_\lambda$. For $|z|\leq 1+\lambda^2$  we will use
   \begin{align*} 
     |K_\lambda(z)|
     \les   \int_1^2 \e{-\lambda \sqrt{r^2-1}} (r^2-1)^{-\alpha}  \dr     
     \les  \int_0^{\sqrt 3} \e{-\lambda \rho} \rho^{1-2\alpha}  \drho  
     \les (1+\lambda)^{2\alpha-2}. 
   \end{align*}
   For  $|z|\geq 1+\lambda^2$  we estimate the kernel with the aid of
   oscillatory integral theory that uses $m\in C^1([1,2])$. Proposition~\ref{prop:asymptotics_delta>0}~(i)
   and Proposition~\ref{prop:asymptotics_delta=0}~(i) imply
    \begin{align}  \label{eq:d=1_Klambda_boundII}
      \begin{aligned}
      |K_\lambda(z)|
      &\les     \left| \int_0^1 \e{\i \rho z} \rho^{-\alpha} 
       (2+\rho)^{-\alpha} m(1+\rho)\e{-\lambda\sqrt{\rho^2+2\rho}} 
       \:\mathrm{d}\rho\right|     \\
       &\!\!\!\!\!\stackrel{\text{Prop. }\ref{prop:Stein}}\les     \int_0^{|z|^{-1}} 
        \rho^{-\alpha} (2+\rho)^{-\alpha} |m(1+\rho)|\e{-\lambda\sqrt{\rho^2+2\rho}}
       \:\mathrm{d}\rho  \\
       &\qquad + \frac{1}{|z|} \left( |z|^{\alpha} + \int_{|z|^{-1}}^1 \left| \frac{\mathrm{d}}{\mathrm{d}\rho} \left(
         \rho^{-\alpha} (2+\rho)^{-\alpha} m(1+\rho)\e{-\lambda\sqrt{\rho^2+2\rho}}
       \right)\right|  \:\mathrm{d}\rho \right)     \\
       &\les     \int_0^{|z|^{-1}} 
        \rho^{-\alpha}  \e{-\lambda\sqrt{\rho}}
       \:\mathrm{d}\rho 
       + |z|^{\alpha-1} \\
      &\qquad + \frac{1}{|z|} \int_{|z|^{-1}}^1 \rho^{-\alpha-1}
       \e{-\lambda\sqrt{\rho}}\:\mathrm{d}\rho + \frac{\lambda}{|z|} \int_{|z|^{-1}}^1 \rho^{-\alpha-\frac{1}{2}} \e{-\lambda\sqrt{\rho}}
		   \:\mathrm{d}\rho \\
       &\les  \lambda^{2\alpha-2}    \int_0^{\lambda |z|^{-1/2}} t^{1-2\alpha} \e{-t}   \:\mathrm{d}t 
		  + |z|^{\alpha-1} \\
	  & \qquad + \frac{\lambda^{2\alpha}}{|z|} \left( \int_{\lambda |z|^{-1/2}}^\lambda 
       t^{-2\alpha}  \e{-t}  + t^{-1-2\alpha} \e{-t} 
       \:\mathrm{d}t \right)     \\
       &\les  \lambda^{2\alpha-2}    \int_0^{\lambda |z|^{-1/2}} t^{1-2\alpha} \e{-t}   \:\mathrm{d}t 
		  + |z|^{\alpha-1} + \frac{\lambda^{2\alpha}}{|z|}  \int_{\lambda |z|^{-1/2}}^\infty 
        t^{-1-2\alpha} \e{-t} \:\mathrm{d}t      \\
       &\les  \lambda^{2\alpha-2}    \left( \lambda |z|^{-1/2} \right)^{2-2\alpha}  
        + |z|^{\alpha-1} + \frac{\lambda^{2\alpha}}{|z|}  \left( \left(\lambda
        |z|^{-1/2}\right)^{-2\alpha}+1\right) \\
       &\les |z|^{\alpha-1}.  
    \end{aligned}
    \end{align}
    Making use of $\frac{pq}{pq+p-q} \geq \frac{1}{1-\alpha}$ (due to  $\frac{1}{p}-\frac{1}{q}\geq \alpha$)
    we get
    \begin{align*}
      \|K_\lambda\|_{L^{\frac{pq}{pq+p-q},\infty}(\R)}
       &\les  (1+\lambda)^{2\alpha-2} \|1\|_{L^{\frac{pq}{pq+p-q},\infty}([0,1+\lambda^2])} 
       +    \| |\cdot|^{\alpha-1}\|_{L^{\frac{pq}{pq+p-q},\infty}([1+\lambda^2,\infty))} \\  
       &\les  (1+\lambda)^{2\alpha-\frac{2}{p}+\frac{2}{q}}
     \intertext{  
    We conclude that \eqref{eq:Klambda_weak_d=1} holds. Along the same lines we find }
      \|K_\lambda\|_{L^{\frac{pq}{pq+p-q}}(\R)}
      &\les   (1+\lambda)^{2\alpha-\frac{2}{p}+\frac{2}{q}} \qquad\qquad\text{if
      }p=1 \text{ or }q=\infty
    \end{align*} 
    because then $\frac{pq}{pq+p-q} > \frac{1}{1-\alpha}$ by assumption. 
    So the sufficiency part of Theorem~\ref{thm:Tlambdaalpha_d=1} is proved.
    
    \medskip
  
    For the construction of a counterexample we assume $m\equiv 1, \lambda=0$  
    as well as $\frac{1}{p}-\frac{1}{q}<\alpha$ and $p\neq 1,q\neq \infty$. We want to show
    that~\eqref{eq:Testimate_d=1} does not hold in this case. To this end we choose $\beta$
    according to 
    \begin{equation} \label{eq:choice_beta_d=1}
      \max\left\{1-\alpha-\frac{1}{q},0\right\} < \beta < 1-\frac{1}{p}.
    \end{equation}
    Then the function $f:= \sqrt{2\pi} \cF_1^{-1}(1_{[1,2]}(\cdot)(|\cdot|^2-1)^{-\beta})$ belongs to
    $L^p(\R)$ because of
    \begin{align*}
      |f(x)| 
      = \left|\int_1^2 \e{\i x\xi} (|\xi|^2-1)^{-\beta}\,\mathrm{d}\xi\right| 
      = \left| \int_0^1 \e{\i x\rho} \rho^{-\beta} (2+\rho)^{-\beta}\,\mathrm{d}\rho\right|
      \les (1+|x|)^{\beta-1},  
    \end{align*}
    see Proposition~\ref{prop:asymptotics_delta>0}~(i). On the other hand,
    Proposition~\ref{prop:asymptotics_delta>0}~(ii) gives in the case $\alpha+\beta <1$
    \begin{align*}
      \left|T_{0,\alpha}f(x)\right| 
      = \left|\int_0^1 \e{\i x\rho} \rho^{-\alpha-\beta} (2+\rho)^{-\alpha-\beta}\,\mathrm{d}\rho \right|
      \gtrsim |x|^{\alpha+\beta-1} \quad\text{as }|x|\to\infty.  
    \end{align*}    
    Since our choice for $\beta$ from~\eqref{eq:choice_beta_d=1} implies $q(\alpha+\beta-1)\geq -1$, this
    estimate gives $T_{0,\alpha}f \notin L^q(\R)$.
    In the case $\alpha+\beta \geq 1$ we slightly modify the counterexample and define 
    $f_\eps:=\cF_1^{-1}( 1_{[1+\eps,2]}(\cdot)(|\cdot|^2-1)^{-\beta})$. Then the sequence
    $(f_\eps)$ is bounded in $L^p(\R)$ by the Hausdorff-Young inequality while 
    $|T_{0,\alpha}f_\eps(x)|\to +\infty$ uniformly on a small neighbourhood of $x=0$. 
    Indeed,  
    \begin{align*}
      \inf_{|x|\leq \pi/8} \left|T_{0,\alpha}f_\eps (x)\right| 
      &= \inf_{|x|\leq \pi/8} \left|\int_{1+\eps}^2 \e{\i x\xi} (|\xi|^2-1)^{-\beta-\alpha}\,\mathrm{d}\xi\right|  \\
      &\geq  \inf_{|x|\leq \pi/8} \left|\int_{1+\eps}^2 \cos(x\xi) (|\xi|^2-1)^{-\beta-\alpha}\,\mathrm{d}\xi\right|
      \\
      &\geq \cos(\pi/4)  \int_{1+\eps}^2   (|\xi|^2-1)^{-\beta-\alpha}\,\mathrm{d}\xi  
      \nearrow \infty \quad\text{as }\eps\to 0.  
    \end{align*}
    This shows that $T_{0,\alpha}$ is unbounded for $\frac{1}{p}-\frac{1}{q}<\alpha$ and
    $p\neq 1,q\neq \infty$.  
    
    \medskip
    
    It remains to show that~\eqref{eq:Testimate_d=1} does not hold either for 
    $p=\frac{1}{\alpha},q=\infty$ or $p=1,q=\frac{1}{1-\alpha}$ and $\alpha\in [0,1)$. By duality, it
    suffices to disprove \eqref{eq:Testimate_d=1} in the former case. 
     This example is constructed as follows. We set for $k \in \N$
    $$
      f_k(y):= \ln(k+1)^{-\alpha} 1_{[1,k+1]}(y) y^{-\alpha}\e{\i y}.
    $$ 
    Then $\|f_k\|_{L^p(\R)}=1$ and
    \begin{align*}
      \|T_{0,\alpha}f_k\|_{L^\infty(\R)}
      \geq |T_{0,\alpha}f_k(0)|  
      = \frac{1}{2\pi} \ln(k+1)^{-\alpha} \left| \int_1^{k+1} y^{-\alpha} \e{\i y} \left( \int_1^2
      \e{-\i\xi y}(|\xi|^2-1)^{-\alpha} \dxi \right) \dy \right|
   \end{align*}
   In the case $\alpha=0$ this implies
   \begin{align*}
      \|T_{0,0}f_k\|_{L^\infty(\R)}
      &\geq  \frac{1}{2\pi}  \left| \int_1^{k+1}  \e{\i y} \left( \int_1^2
      \e{-\i\xi y}  \dxi \right) \dy \right| \\
      &= \frac{1}{2\pi}  \left| \int_1^{k+1}  \e{\i y} \cdot \frac{\e{-2\i y}-\e{-\i y}}{-\i y}  \dy \right| \\
      &= \frac{1}{2\pi}  \left| \int_1^{k+1}  \frac{\e{-\i y}}{y} - \frac{1}{y}    \dy \right|  \\
      &\geq \frac{\ln(k+1)}{2\pi}   - \frac{1}{2\pi} \left| \int_1^{k+1}  \frac{\e{-\i y}}{y}   \dy \right|,
   \end{align*}
   which tends to $+\infty$ as $k \to \infty$.
   This proves the unboundedness in the case $\alpha=0$, i.e., for $p=q=\infty$. In the case $\alpha\in (0,1)$
   we get from Proposition~\ref{prop:asymptotics_delta>0}~(ii)   
   \begin{align*}
     \lim_{y\to\infty} y^{1-\alpha} \e{\i y} \int_1^2 \e{-\i\xi y} (|\xi|^2-1)^{-\alpha}\dxi 
     &=  \lim_{y\to\infty} y^{1-\alpha} \int_0^1 \e{-\i\rho y}\rho^{-\alpha} (2+\rho)^{-\alpha}\drho \\
     &=  2^{-\alpha} \int_0^\infty \e{-\i \rho} \rho^{-\alpha} \drho 
     \\
     &=:\mu \in\C\sm\{0\}. 
   \end{align*}   
   Hence, for $k_0 \in \N$ sufficiently large and all $k\geq k_0$ we have   
   \begin{align*}
      \|T_{0,\alpha}f_k\|_{L^\infty(\R)}
      &\geq |T_{0,\alpha}f_k(0)| \\ 
      &=\frac{1}{2\pi} \ln(k+1)^{-\alpha} \left| \int_1^{k+1} y^{-\alpha} \e{\i y} \left( \int_1^2 \e{-\i\xi
      y}(|\xi|^2-1)^{-\alpha}\:\mathrm{d}\xi \right) \:\mathrm{d}y \right| \\
      &=  \frac{1}{2\pi} \ln(k+1)^{-\alpha} \left| \int_{k_0}^{k+1} y^{-1}\cdot y^{1-\alpha} \e{\i y} \left( \int_1^2 \e{-\i\xi
      y}(|\xi|^2-1)^{-\alpha}\:\mathrm{d}\xi \right) \:\mathrm{d}y \right| + o(1) \\
      &\geq \frac{|\mu|}{4\pi} \ln(k+1)^{-\alpha} \int_{k_0 }^{k+1} y^{-1}\dy + o(1) \\
      &= \frac{|\mu|}{4\pi} \ln(k+1)^{1-\alpha}  + o(1),  
   \end{align*}  
   which tends to $+\infty$ as $k \to \infty$.
   Hence, the operator $T_{0,\alpha}:L^p(\R)\to L^q(\R)$ is unbounded for 
   $p=\frac{1}{\alpha},q=\infty$ and $\alpha\in (0,1)$, which is all we had to show.
   \qed

\section{Proof of Theorem~\ref{thm:Tlambdaalpha}} \label{sec:ProofTlambda}

Theorem~\ref{thm:Tlambdaalpha} is proved with the aid of Stein's Interpolation Theorem
\cite[Theorem~1]{Stein_Interpolation} for holomorphic families of operators. So we have to
estimate the operators $T_{\lambda,\alpha}$ defined in~\eqref{eq:def_Tlambdaalpha}. 
We first recall our estimates for the operators $S_\lambda$ from~\eqref{eq:def_Slambda} that will
provide the desired bounds in the endpoint case $\alpha=0$. Choosing $s=2$ in Theorem~\ref{thm:Slambdaalpha}
we get the following.     

\begin{prop} \label{prop:Slambda0}
  Let $d\in\N,d\geq 2$, $0<a<b<\infty$ and $m\in L^\infty([a,b])$. Then we have for all $\lambda\geq 0$
  and all $p\in [1,2]$
  \begin{align*}
      \|S_{\lambda} h\|_{L^2(A)}  
      &\les (1+\lambda)^{- 1+(\frac{d+3}{2}-\frac{d+1}{p})_+} \|h\|_{L^p(\R^d)}, \\
      \|S_{\lambda}^* g\|_{L^{p'}(\R^d)} 
      &\les (1+\lambda)^{- 1+(\frac{d+3}{2}-\frac{d+1}{p})_+} \|g\|_{L^2(A)}.  
  \end{align*}
\end{prop}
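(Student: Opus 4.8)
Proposition~\ref{prop:Slambda0} is the special case $s=2$ (hence $s'=2$) of Theorem~\ref{thm:Slambdaalpha}, so the plan is to check that the hypotheses of that theorem apply and then to simplify the resulting power of $1+\lambda$. I would first dispose of the endpoint $p=2$ directly: on $A$ one has $\e{-\lambda\sqrt{|\xi|^2-a^2}}\le 1$, so $\|S_\lambda h\|_{L^2(A)}\le\|m\|_\infty\|\cF_d h\|_{L^2(A)}\le\|m\|_\infty\|h\|_{L^2(\R^d)}$ and likewise $\|S_\lambda^* g\|_{L^2(\R^d)}\le\|m\|_\infty\|g\|_{L^2(A)}$ by Plancherel, which is the claimed bound with decay rate $0=-1+(\tfrac{d+3}{2}-\tfrac{d+1}{2})_+$ and constant independent of $\lambda$. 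From now on assume $p\in[1,2)$.

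For such $p$ the conditions $1\ge\tfrac1p\ge\tfrac12$ and $1\ge\tfrac1s=\tfrac12\ge\tfrac1{p'}=1-\tfrac1p$ of Theorem~\ref{thm:Slambdaalpha} hold (and $d\ge2$, so $\beta$ is given by~\eqref{eq:def_beta}), and the theorem yields $\|S_\lambda h\|_{L^2(A)}\les(1+\lambda)^{1-\frac2p-\beta}\|h\|_{L^p(\R^d)}$ together with the analogous bound for $S_\lambda^*$. It therefore suffices to show $1-\tfrac2p-\beta\le -1+(\tfrac{d+3}{2}-\tfrac{d+1}{p})_+$. Next I would insert $p_*(d)=\tfrac{2(d+2)}{d+4}$, so that $\tfrac2{p_*(d)'}=\tfrac{d}{d+2}$ and $\tfrac1{p_*(d)}-\tfrac12=\tfrac1{d+2}$, whereupon the four entries of the minimum in~\eqref{eq:def_beta} with $s'=2$ simplify to $A:=(d-1)(\tfrac1p-\tfrac12)$, $B:=\tfrac{d(d+1)}{d+2}(\tfrac1p-\tfrac12)$, $C-\eps$ with $C:=d(\tfrac1p-\tfrac12)$, and $D:=\tfrac2{p'}$. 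From $d-1\le\tfrac{d(d+1)}{d+2}$ and $d-1\le d$ one gets $A\le B$ and $A\le C$; and since $\eps>0$ may be taken as small as we like (the constant in $\les$ then depending additionally on $\eps$, $p$, $d$), choosing $\eps<\tfrac1p-\tfrac12$ forces $A\le C-\eps$. Hence $\beta=\min\{A,D\}$.

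Finally I would evaluate this minimum. An elementary rearrangement shows $A\le D$ is equivalent to $\tfrac1p\le\tfrac{d+3}{2(d+1)}$: in that range $\beta=A$ and $1-\tfrac2p-\beta=-1+\tfrac{d+3}{2}-\tfrac{d+1}{p}=-1+(\tfrac{d+3}{2}-\tfrac{d+1}{p})_+$; in the complementary range $\beta=D=\tfrac2{p'}$, so $1-\tfrac2p-\beta=-1=-1+(\tfrac{d+3}{2}-\tfrac{d+1}{p})_+$ because there $\tfrac{d+3}{2}-\tfrac{d+1}{p}<0$. In both cases the exponent matches the one in the statement; the estimate for $S_\lambda^*$ is obtained the same way (it is in fact already part of Theorem~\ref{thm:Slambdaalpha}, or follows by duality). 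I do not expect a genuine obstacle here: the argument is pure bookkeeping, and the only subtlety is that at $p=2$ the term $C-\eps$ in~\eqref{eq:def_beta} would contribute a spurious positive power of $1+\lambda$, which is exactly why $p=2$ has to be handled separately by Plancherel.
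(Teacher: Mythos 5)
Your proof is correct and follows the paper's route exactly: the paper obtains Proposition~\ref{prop:Slambda0} simply by setting $s=2$ in Theorem~\ref{thm:Slambdaalpha}, and you carry out the bookkeeping that this one-liner suppresses. Your separate Plancherel treatment of $p=2$ is a legitimate refinement, since at $p=s=2$ the $\eps$-losses in~\eqref{eq:def_beta} make $\beta=-\eps$, which would produce a spurious positive power of $1+\lambda$ rather than the claimed exponent $0$. One small slip: the second entry of~\eqref{eq:def_beta} also carries a $-\eps$, so to conclude $\beta=\min\{A,D\}$ you need $\eps<\min\{B-A,\,C-A\}=\tfrac{2}{d+2}\bigl(\tfrac1p-\tfrac12\bigr)$, which is stricter than your stated $\eps<\tfrac1p-\tfrac12$; since $\eps$ may be taken arbitrarily small for each fixed $p<2$, this does not affect the conclusion.
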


As a consequence we obtain the following result.

\begin{prop} \label{prop:Tlambda0}
  Let $d\in\N,d\geq 2$, $0<a<b<\infty$ and $m\in L^\infty([a,b])$. Then we have for all $\lambda\geq 0$
  and all $p\in [1,2],q\in [2,\infty]$
  \begin{align*}
      \|T_{\lambda,0} h\|_{L^q(\R^d)}  
      \les (1+\lambda)^{- 2+(\frac{d+3}{2}-\frac{d+1}{p})_+ + (\frac{d+3}{2}-\frac{d+1}{q'})_+}  \|h\|_{L^p(\R^d)}.
  \end{align*}
\end{prop}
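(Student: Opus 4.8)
The plan is to factor the operator $T_{\lambda,0}$ through the annulus as a composition of $S_\lambda$ with (essentially) the adjoint $S_\lambda^*$, so that Proposition~\ref{prop:Slambda0} can be applied twice. Recall from~\eqref{eq:def_Tlambdaalpha} that (with $\alpha=0$)
\begin{align*}
  T_{\lambda,0} h
  = \cF_d^{-1}\left( 1_A(\cdot)\,\e{-\lambda\sqrt{|\cdot|^2-a^2}}\, m(|\cdot|)\,\cF_d h(\cdot)\right),
\end{align*}
while $S_\lambda h = 1_A(\cdot)\,\e{-\lambda\sqrt{|\cdot|^2-a^2}}\,m(|\cdot|)\,\cF_d h$ maps into $L^s(A)$ and $S_\lambda^* g = \cF_d^{-1}(1_A(\cdot)\,\e{-\lambda\sqrt{|\cdot|^2-a^2}}\,\ov{m(|\cdot|)}\, g)$. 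First I would observe that splitting the exponential and the symbol symmetrically, $\e{-\lambda\sqrt{|\xi|^2-a^2}} = \e{-\frac{\lambda}{2}\sqrt{|\xi|^2-a^2}}\cdot \e{-\frac{\lambda}{2}\sqrt{|\xi|^2-a^2}}$ and $m = m_1\cdot \ov{m_2}$ with, say, $m_1 = m$ and $m_2 \equiv 1$ (or any bounded factorization), one can write
\begin{align*}
  T_{\lambda,0} = S_{\lambda/2}^{(2)} \circ S_{\lambda/2}^{(1)},
\end{align*}
where $S_{\lambda/2}^{(1)}:L^p(\R^d)\to L^2(A)$ carries the symbol $\e{-\frac{\lambda}{2}\sqrt{|\cdot|^2-a^2}} m(|\cdot|)$ and $S_{\lambda/2}^{(2)}:L^2(A)\to L^q(\R^d)$ is of the form of an adjoint operator $S^*$ with symbol $\e{-\frac{\lambda}{2}\sqrt{|\cdot|^2-a^2}}$ and bounded density $1$. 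Both factors fall under the scope of Proposition~\ref{prop:Slambda0}: the first with the stated $p$ and $s=2$, the second viewed as $S_{\lambda/2}^*:L^2(A)=L^{s'}(A)\to L^{p'}(\R^d)$ with the dummy exponent ``$p$'' there equal to $q'\in[1,2]$ (legitimate since $q\in[2,\infty]$).

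The main calculation is then simply to compose the two bounds. Proposition~\ref{prop:Slambda0} gives $\|S_{\lambda/2}^{(1)} h\|_{L^2(A)} \les (1+\lambda/2)^{-1+(\frac{d+3}{2}-\frac{d+1}{p})_+}\|h\|_{L^p}$ and $\|S_{\lambda/2}^{(2)} g\|_{L^{q}(\R^d)} \les (1+\lambda/2)^{-1+(\frac{d+3}{2}-\frac{d+1}{q'})_+}\|g\|_{L^2(A)}$. Since $1+\lambda/2$ and $1+\lambda$ are comparable uniformly in $\lambda\geq 0$, multiplying the two estimates yields exactly
\begin{align*}
  \|T_{\lambda,0} h\|_{L^q(\R^d)} \les (1+\lambda)^{-2 + (\frac{d+3}{2}-\frac{d+1}{p})_+ + (\frac{d+3}{2}-\frac{d+1}{q'})_+}\|h\|_{L^p(\R^d)},
\end{align*}
which is the claim. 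The boundedness of $m$ enters only through $\|m\|_{L^\infty([a,b])}<\infty$, absorbed into the implicit constant.

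I expect the only real point requiring care — rather than a genuine obstacle — is the bookkeeping in the factorization: one must check that the middle space is indeed $L^2(A)$ (so that the output space of the first factor matches the input space of the second), that the symbol of the composed operator is $1_A\, \e{-\lambda\sqrt{|\cdot|^2-a^2}} m(|\cdot|)$ as required (here it is crucial that $1_A$ is idempotent, so applying it twice does no harm, and that the Fourier multiplier structure composes by pointwise multiplication of symbols), and that the admissible ranges $p\in[1,2]$ and $q'\in[1,2]$ demanded by Proposition~\ref{prop:Slambda0} for each factor are exactly the ranges $p\in[1,2]$, $q\in[2,\infty]$ claimed here. All of this is routine. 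An alternative, essentially equivalent route would be to prove the endpoint bounds $T_{\lambda,0}:L^2\to L^q$ and $T_{\lambda,0}:L^p\to L^2$ separately (each being a single application of $S_\lambda$ resp. $S_\lambda^*$ composed with the trivial inclusion, using Plancherel on the $L^2$ side) and then interpolate; but the direct composition above is cleaner and gives the sharp exponent immediately.
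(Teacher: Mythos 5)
Your proof is correct and follows essentially the same strategy as the paper's: factor $T_{\lambda,0}$ through $L^2(A)$ as a composition of an $S$-type operator followed by an $S^*$-type operator, then apply Proposition~\ref{prop:Slambda0} to each factor and multiply the two bounds. The one genuine difference is how you handle the symbol $m$: you use the asymmetric factorization $m = m\cdot 1$ (so the two factors carry $m$ and $1$, respectively), whereas the paper first reduces to real, nonnegative $m$ by writing $m = \mathfrak m_1^2 - \mathfrak m_2^2 + \i(\mathfrak m_3^2 - \mathfrak m_4^2)$ and then factors symmetrically as $T_{\lambda,0} = S_{\lambda}^*S_{\lambda}$ with symbol $\sqrt m$ in both factors (and implicitly $\lambda/2$ in the exponentials, which you handle explicitly). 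Your version sidesteps the four-term splitting and so is slightly cleaner; the paper's symmetric version perhaps emphasizes the $T = S^*S$ structure more. Both yield the stated decay in $(1+\lambda)$ because, as you note, $(1+\lambda/2)$ and $(1+\lambda)$ are comparable, which is the only point that needs a moment's care since the exponent is nonpositive.
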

\begin{proof}
 We may assume that $m$ is real-valued nonnegative, otherwise we split the operator into the sum of four
  such operators according to 
  $m= \mathfrak m_1^2- \mathfrak m_2^2+\i(\mathfrak m_3^2-\mathfrak m_4^2)$. 
  But then we have
  $T_{\lambda,0}=S_{\lambda}^* S_{\lambda}$ where $m$ in the definition of
  $S_{\lambda},S_{\lambda}$ is replaced by $\sqrt{m}$ and thus the claim follows from
  Proposition~\ref{prop:Slambda0}.
\end{proof}

Next we use these estimates in the endpoint case
$\alpha=0$ for the analysis of $T_{\lambda,\alpha}$ from~\eqref{eq:def_Tlambdaalpha} with $\alpha\in
(0,1)$. Up to an $\alpha$-dependent prefactor, these operators may be embedded
into the family of operators
\begin{equation} \label{eq:def_Tlambdas}
   \mathcal T_{\lambda,s} h
  :=  \frac{\e{(1-s)^2}}{\Gamma(1-s)} \cF_d^{-1}\left( 1_A(\cdot) \e{-\lambda\sqrt{|\cdot|^2-a^2}}
  (|\cdot|^2-a^2)^{-s} m(|\cdot|) \cF_d h(\cdot) \right).
\end{equation} 
A priori, these operators are well-defined for Schwartz functions $h:\R^d\to\C$ and 
$s\in\C$ with $0\leq \Real(s)<1$. We assume $\lambda\geq 0$ and $m\in C([a,b])$. Since we are
going to apply Stein's Interpolation Theorem (Theorem~1 in~\cite{Stein_Interpolation}) to the family
$(\mathcal T_{\lambda,\sigma s})_{s\in S}$ where $S:= \{ s\in \C: 0\leq \Real(s)\leq 1\}$ and $\sigma\in
[0,1]$ (including the endpoint case $\sigma=1$), we need to extend the operators from~\eqref{eq:def_Tlambdas}
to the line $\Real(s)=1$ in a continuous way. Only for this reason we will temporarily assume  
$m\in C^1([a,b])$, but we will see that this extra assumption is actually not necessary. The extension is
based on the representation
\begin{align*}
   (\mathcal T_{\lambda,s} h)(x)
   &=  \frac{\e{(1-s)^2}}{\Gamma(1-s)} \int_a^b \e{-\lambda\sqrt{r^2-a^2}} (r^2-a^2)^{-s} m(r)
      \cF_d^{-1}( \cF_d h \:\mathrm{d}\sigma_r)(x)\:\mathrm{d}r \\
   &= (1-s) \int_a^b (r-a)^{-s} (\mathcal A_{\lambda,s}(r)h)(x) \:\mathrm{d}r
   \quad\text{where } \\
   (\mathcal A_{\lambda,s}(r)h)(x) &:=   \frac{\e{(1-s)^2}}{\Gamma(2-s)}  
   \e{-\lambda\sqrt{r^2-a^2}} (r+a)^{-s}m(r) \cF_d^{-1}(\cF_d h \:\mathrm{d}\sigma_r)(x). 
\end{align*}
Integration by parts motivates the definition
\begin{align*}
  (\mathcal T_{\lambda,s} h)(x)
   &=  (b-a)^{1-s} (\mathcal A_{\lambda,s}(b)h)(x) - \int_a^b (r-a)^{1-s} (\mathcal A_{\lambda,s}'(r)h)(x)
   \dr \qquad\text{if } \Real(s)=1.
\end{align*}
Notice that this expression is well-defined for Schwartz functions $h:\R^d\to\C$ 
(due to $m\in C^1([a,b])$) and we have 
$$
  \mathcal T_{\lambda,1}h
  = \mathcal A_{\lambda,1}(a)h 
  =  (2a)^{-1}m(a) \cF_d^{-1}(\cF_d h \:\mathrm{d}\sigma_a).
$$
In order to apply the Interpolation Theorem, we need to check that $(\mathcal T_{\lambda,s})_{s\in S}$ is an
analytic family of operators in the sense of~\cite[p.483]{Stein_Interpolation}.

\begin{prop} \label{prop:admissiblefamily}
  For all Schwartz functions $h_1,h_2:\R^d\to\C$ the map 
  $
   s\mapsto \int_{\R^d} h_1 (\mathcal T_{\lambda,s}h_2) \:\mathrm{d}x
  $
  is holomorphic in $\mathring S$ and continuous on $S$.  
\end{prop}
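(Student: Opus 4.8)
The plan is to prove holomorphy and continuity of $s \mapsto \langle h_1, \mathcal T_{\lambda,s}h_2\rangle$ by working from the integral representations of $\mathcal T_{\lambda,s}$ given just above the proposition, treating the region $\operatorname{Re}(s)<1$ and the boundary line $\operatorname{Re}(s)=1$ separately and then gluing.

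First I would address the open strip $\mathring S \cap \{\operatorname{Re}(s)<1\}$. Here $\mathcal T_{\lambda,s}h_2$ is given directly by \eqref{eq:def_Tlambdas}, so
$$
 \langle h_1, \mathcal T_{\lambda,s}h_2\rangle
 = \frac{\e{(1-s)^2}}{\Gamma(1-s)} \int_a^b \e{-\lambda\sqrt{r^2-a^2}} (r^2-a^2)^{-s} m(r)\,
 \langle h_1, \cF_d^{-1}(\cF_d h_2\,\mathrm{d}\sigma_r)\rangle \,\mathrm{d}r,
$$
after writing $\cF_d^{-1}(1_A(\cdot)(\cdot)\,\cF_d h_2)$ in polar coordinates and using that $\cF_d^{-1}(\cF_d h_2\,\mathrm d\sigma_r)$ is a Schwartz-class-tested quantity depending smoothly on $r\in[a,b]$. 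The prefactor $\e{(1-s)^2}/\Gamma(1-s)$ is entire (the poles of $1/\Gamma$ lie at $1-s\in\{0,-1,-2,\dots\}$, i.e. $s\in\{1,2,3,\dots\}$, outside the strip), and for each fixed $r\in(a,b]$ the integrand is holomorphic in $s$. To pass the $s$-derivative under the integral and to get continuity up to $\operatorname{Re}(s)=0$, I would produce a locally uniform (in $s$) integrable majorant: on a compact subset of $\{0\le\operatorname{Re}(s)\le 1-\delta\}$ one has $|(r^2-a^2)^{-s}|=(r^2-a^2)^{-\operatorname{Re}(s)}\le (r^2-a^2)^{-(1-\delta)}$ near $r=a$, which is integrable since $1-\delta<1$, and $\langle h_1,\cF_d^{-1}(\cF_d h_2\,\mathrm d\sigma_r)\rangle$ is bounded by Plancherel/Stein–Tomas-type bounds or simply by $\|h_1\|_{L^2}\|\cF_d h_2\|_{L^\infty}$ times $\sigma_r(\Ss^{d-1}_r)$, uniformly for $r\in[a,b]$. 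Morera's theorem plus dominated convergence then give holomorphy on $\{\operatorname{Re}(s)<1\}$ and continuity on $\{\operatorname{Re}(s)\le 1\}$ from the left.

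Next I would treat a neighbourhood of the line $\operatorname{Re}(s)=1$ using the integration-by-parts representation
$$
 \langle h_1,\mathcal T_{\lambda,s}h_2\rangle
 = (1-s)\int_a^b (r-a)^{-s}\,\langle h_1,\mathcal A_{\lambda,s}(r)h_2\rangle\,\mathrm dr,
$$
valid for $\operatorname{Re}(s)<1$, together with its once-integrated-by-parts form
$$
 \langle h_1,\mathcal T_{\lambda,s}h_2\rangle
 = (b-a)^{1-s}\langle h_1,\mathcal A_{\lambda,s}(b)h_2\rangle
 - \int_a^b (r-a)^{1-s}\,\langle h_1,\mathcal A_{\lambda,s}'(r)h_2\rangle\,\mathrm dr .
$$
The point is that $\mathcal A_{\lambda,s}(r)$ carries the factor $1/\Gamma(2-s)$ instead of $1/\Gamma(1-s)$, which is holomorphic and nonzero-denominator for $\operatorname{Re}(s)\le 1$ (its first pole is at $s=2$), and $\langle h_1,\mathcal A_{\lambda,s}(r)h_2\rangle$ together with its $r$-derivative $\langle h_1,\mathcal A_{\lambda,s}'(r)h_2\rangle$ are jointly continuous in $(s,r)\in S\times[a,b]$ and holomorphic in $s$ — here is precisely where $m\in C^1([a,b])$ is used, so that $\mathcal A_{\lambda,s}'(r)$, which differentiates $(r+a)^{-s}m(r)\e{-\lambda\sqrt{r^2-a^2}}$ and the Schwartz-tested $\cF_d^{-1}(\cF_d h_2\,\mathrm d\sigma_r)$ term, exists and is bounded. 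Since $|(r-a)^{1-s}|=(r-a)^{1-\operatorname{Re}(s)}\le 1$ for $\operatorname{Re}(s)\ge 0$ near $r=a$ and is bounded on all of $[a,b]$ for $s$ in a compact set, dominated convergence and Morera again give holomorphy in $s$ of the right-hand side on a full neighbourhood of $\{\operatorname{Re}(s)=1\}$ within $S$, and continuity up to the boundary. Finally, an integration by parts (justified by the $C^1$ hypothesis) shows the two displayed expressions agree on the overlap region $\operatorname{Re}(s)<1$, so the two holomorphic functions patch into one holomorphic function on $\mathring S$ that is continuous on $S$; evaluating the boundary form at $s=1$ recovers $\mathcal T_{\lambda,1}h_2=(2a)^{-1}m(a)\cF_d^{-1}(\cF_d h_2\,\mathrm d\sigma_a)$, confirming consistency.

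The main obstacle is the boundary behaviour at $s=1$: naively the prefactor $1/\Gamma(1-s)$ has a zero there while $(r^2-a^2)^{-s}$ develops a non-integrable singularity at $r=a$, and the content of the argument is that these two effects exactly cancel, which is exactly what the integration-by-parts identity encodes. Getting the domination uniform in $s$ near that corner — so that differentiation under the integral and the limit $s\to 1$ are both legitimate — is the delicate point, and it is handled by switching to the $\mathcal A_{\lambda,s}$-representation before approaching the line. A secondary, purely bookkeeping, point is to confirm that $r\mapsto \langle h_1,\cF_d^{-1}(\cF_d h_2\,\mathrm d\sigma_r)\rangle$ is $C^1$ in $r$ with bounds uniform on $[a,b]$; this follows because $h_1,h_2$ are Schwartz functions, $\cF_d h_2$ is Schwartz, and the surface-measure integral $\int_{\Ss^{d-1}_r}\cF_d h_2(\omega)\,\overline{\cF_d h_1(\omega)}\,\mathrm d\sigma_r(\omega)$ depends smoothly on the radius $r>0$.
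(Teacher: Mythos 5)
Your proof is correct and follows exactly the route the paper sets up in the paragraphs preceding the proposition (the paper itself gives no explicit proof): use the direct polar representation of $\mathcal T_{\lambda,s}$ together with dominated convergence and Morera for $\Real(s)<1$, and switch to the $\mathcal A_{\lambda,s}$--representation obtained by integration by parts (which is precisely where $m\in C^1([a,b])$ enters) to get a locally uniformly bounded integrand near $\Real(s)=1$ and hence continuity up to that boundary line. One small slip worth correcting: $1/\Gamma$ has no poles at all (it is entire); $\Gamma(1-s)$ has poles at $s\in\{1,2,\dots\}$, so $1/\Gamma(1-s)$ has \emph{zeros} there, and $s=1$ lies on $\partial S$ rather than "outside the strip." Your conclusion that the prefactor $\e{(1-s)^2}/\Gamma(1-s)$ is entire, and that its simple zero at $s=1$ is what compensates the divergence of $\int_a^b(r^2-a^2)^{-s}\,\mathrm{d}r$, is nonetheless right and is exactly what the $\mathcal A_{\lambda,s}$ integration-by-parts form makes manifest.
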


Proposition~\ref{prop:admissiblefamily} implies that for all $\sigma\in [0,1]$ the family
$(\mathcal T_{\lambda,\sigma s})_{s\in S}$ is admissible for Stein's Interpolation Theorem. 
Notice that the original version requires Proposition~\ref{prop:admissiblefamily} to hold for step functions,
but actually any dense family of functions can be chosen.
We use this fact
in order to show that $T_{\lambda,\alpha}$, which is an $\alpha$-dependent multiple of $\mathcal
T_{\lambda,\alpha}$, is a bounded operator from $L^p(\R^d)$ to $L^q(\R^d)$ whenever $\alpha\in (0,1)$ and $(p,q)\in D_\alpha$ where 
\begin{align*}
  D_\alpha
  &= \left\{ (p,q)\in [1,\infty]^2: \frac{1}{p}>\frac{1}{2}+\frac{\alpha}{2d},\;
   \frac{1}{q}<\frac{1}{2}-\frac{\alpha}{2d},\; \frac{1}{p}-\frac{1}{q}\geq \frac{2\alpha}{d+1}\right\},
\end{align*}
cf.~\eqref{eq:def_D}.  An estimate of the corresponding mapping
constant will then provide the result.

\medskip

\noindent \textbf{Proof of Theorem~\ref{thm:Tlambdaalpha}:} For notational convenience we only discuss
$a=1,b=2$. As explained earlier, our proof is based on complex interpolation. We temporarily assume
$m\in C^1([1,2])$ in order to make use of Proposition~\ref{prop:admissiblefamily} that is needed for
Stein's Interpolation Theorem. On the other hand, our estimates will only depend on the $L^\infty$-norm of $m$
so that all results will persist for $m$ belonging to the completion of $C^1([1,2])$ with respect to this
norm, namely for $m\in C([1,2])$. 

\medskip
  
  We start with recalling the estimates for the endpoint $\alpha=0$. From Proposition~\ref{prop:Tlambda0} we
  deduce 
  \begin{align} \label{eq:interpolation_alpha0}
    \|\mathcal T_{\lambda,0}f\|_{L^{q_1}(\R^d)}
    = \|T_{\lambda,0}f\|_{L^{q_1}(\R^d)}
    \les (1+\lambda)^{- 2  +  (\frac{d+3}{2}-\frac{d+1}{q_1'})_++(\frac{d+3}{2}-\frac{d+1}{p_1})_+}     
    \|f\|_{L^{p_1}(\R^d)} 
  \end{align}
  whenever $1\leq p_1\leq 2\leq q_1\leq \infty$. Those already yield the claim 
  for $\alpha=0$ so that we may assume $\alpha\in (0,1)$ in the following.   
  On the other hand, for exponents $p_2,q_2$ satisfying 
  $\frac{1}{p_2}>\frac{d+1}{2d},\frac{1}{q_2}<\frac{d-1}{2d},\frac{1}{p_2}-\frac{1}{q_2}\geq
  \frac{2}{d+1}$ we get for any $s\in S$ with $0\leq \Real(s)<1$ from Minkowski's
  inequality in integral form and Corollary~\ref{cor:Gutierrez}
  \begin{align} \label{eq:interpolation_alphas}
    \begin{aligned}
    \|\mathcal T_{\lambda,s}f\|_{L^{q_2}(\R^d)}
    &= \left\|\frac{\e{(1-s)^2}}{\Gamma(1-s)} \int_1^2
    \e{-\lambda\sqrt{r^2-1}}(r^2-1)^{-s}m(r)\cF_d^{-1}\left( (\cF_df \:\mathrm{d}\sigma_r)(\cdot)
    \:\mathrm{d}r \right)\right\|_{L^{q_2}(\R^d)} \\
    &\leq \left|\frac{\e{(1-s)^2}}{\Gamma(1-s)}\right| \int_1^2  \e{-\lambda\sqrt{r^2-1}}
    (r^2-1)^{-\Real(s)} m(r) \left\|\cF_d^{-1}\left(\cF_d f \:\mathrm{d}\sigma_r\right) \right\|_{L^{q_2}(\R^d)} \:\mathrm{d}r \\
    &\leq \left|\frac{\e{(1-s)^2}}{\Gamma(1-s)}\right|\int_1^2  \e{-\lambda\sqrt{r^2-1}}
    (r^2-1)^{-\Real(s)} m(r) r^{-1+\frac{d}{p_2}-\frac{d}{q_2}} \|f\|_{L^{p_2}(\R^d)} \:\mathrm{d}r    \\
    &\les \left|\frac{\e{(1-s)^2}}{\Gamma(1-s)}\right| \|m\|_\infty  \left(\int_0^{\sqrt 3}  \e{-\lambda\rho}
    \rho^{1-2\Real(s)}    \:\mathrm{d}\rho \right) \|f\|_{L^{p_2}(\R^d)}  \\
    &\les \left|\frac{\e{(1-s)^2}}{\Gamma(1-s)}\right| \|m\|_\infty |1-\Real(s)| (1+\lambda)^{2\Real(s)-2}
    \|f\|_{L^{p_2}(\R^d)}  \\
    &\les \|m\|_\infty (1+\lambda)^{2\Real(s)-2} \|f\|_{L^{p_2}(\R^d)}.
  \end{aligned}
  \end{align}
  By our choice of the prefactor, which is adapted from \cite[p.381]{Stein_Harmonic}, the above estimate is
  uniform with respect to $s\in S$ such that $0\leq \Real(s)<1$. Moreover, as announced earlier, it only
  depends on the $L^\infty$-norm of $m$. Hence, the
  continuity property from Proposition~\ref{prop:admissiblefamily} implies that the estimate persists on the closure of this set,
  namely on the whole strip $S$. This is a consequence of the Uniform Boundedness Principle. From
  Proposition~\ref{prop:admissiblefamily},~\eqref{eq:interpolation_alpha0},~\eqref{eq:interpolation_alphas} we
  infer that, for any given $\sigma\in [0,1]$,  $(\mathcal T_{\lambda,\sigma s})_{s\in\Ss}$ is a holomorphic
  family of operators of admissible growth in the sense of \cite{Stein_Interpolation} so that Stein's
  Interpolation Theorem applies. 
  
  \medskip
  
  We consider three different regimes of exponents $(p,q)\in D_\alpha$, namely
  \begin{itemize}
    \item[(a)] $(p,q)\in \mathcal D$, i.e.,  $\frac{1}{p}>\frac{d+1}{2d}$,\;
    $\frac{1}{q}<\frac{d-1}{2d}$,\;$\frac{1}{p}-\frac{1}{q}\geq \frac{2}{d+1}$,
    \item[(b)] $\frac{1}{p}-\frac{1}{q}< \min\left\{ \frac{2}{d+1}, \frac{2d}{d+1} (2m_{p,q}-1)\right\}$,
	\item[(c)]  $m_{p,q}\leq \frac{d+1}{2d}$ and  $\frac{1}{p}-\frac{1}{q} \geq \frac{2d}{d+1}(2m_{p,q}-1)$.
  \end{itemize} 
  Here, $m_{p,q}:=\min\{\frac{1}{p},1-\frac{1}{q}\}$. 
  %
  First, for $(p,q)$ as in (a) we do not need interpolation to conclude. Indeed, the above estimate implies
  $$
    \|\mathcal T_{\lambda,\alpha}f\|_{L^q(\R^d)}
    \les (1+\lambda)^\gamma \|f\|_{L^p(\R^d)}
    \qquad
    \text{where } \gamma = 2\alpha-2 <  2\alpha-2 + \frac{1}{p}-\frac{1}{q}
  $$ 
  Hence, \eqref{eq:gamma_conditions} holds and the claim is proved for such exponents. 
 For exponents $(p,q)$ as in (b) or (c) we use interpolation. Having the above conditions on $p_1,q_1,p_2,q_2$
 in mind, Stein's Interpolation Theorem gives
  \begin{align*}
    &\| \mathcal T_{\lambda,\theta\sigma} f\|_{L^q(\R^d)}
    \les (1+\lambda)^{2\theta\sigma-2+(1-\theta)(\frac{d+3}{2}-\frac{d+1}{p_1})_+
    +(1-\theta)(\frac{d+3}{2}-\frac{d+1}{q_1'})_+} \|f\|_{L^p(\R^d)},   \\
    &\text{where}\quad \frac{1}{p}=\frac{1-\theta}{p_1}+\frac{\theta}{p_2},\quad
    \frac{1}{q}=\frac{1-\theta}{q_1}+\frac{\theta}{q_2},\quad \theta\in [0,1),\;\sigma \in [0,1].
  \end{align*}
  Being interested in $\theta\sigma=\alpha$  we thus obtain $(\sigma:=\alpha/\theta)$
  \begin{align}\label{eq:interpolation}
    \begin{aligned}
    &\|\mathcal T_{\lambda,\alpha}f\|_{L^q(\R^d)}
    \les (1+\lambda)^{2\alpha-2+(1-\theta)(\frac{d+3}{2}-\frac{d+1}{p_1})_+
    +(1-\theta)(\frac{d+3}{2}-\frac{d+1}{q_1'})_+} \|f\|_{L^p(\R^d)},  \\
    &\text{where}\quad  \frac{1}{p}=\frac{1-\theta}{p_1}+\frac{\theta}{p_2},\qquad
    \frac{1}{q}=\frac{1-\theta}{q_1}+\frac{\theta}{q_2},\qquad \theta\in [\alpha,1), \\
    &1\leq p_1\leq 2\leq q_1\leq \infty,\qquad  \frac{1}{p_2}-\frac{1}{q_2}\geq
    \frac{2}{d+1},\qquad 1\geq \frac{1}{p_2},\frac{1}{q_2'}>\frac{d+1}{2d}.
  \end{aligned}
  \end{align} 
  In the case (b) one can check that that the choice 
  \begin{align*}
      \theta=\frac{d+1}{2}\left(\frac{1}{p}-\frac{1}{q}\right),\quad 
      p_1=q_1=2,\quad \frac{1}{p_2} =
      \frac{1}{2}+\frac{2}{d+1}\frac{\frac{1}{p}-\frac{1}{2}}{\frac{1}{p}-\frac{1}{q}},\quad
      \frac{1}{q_2} = \frac{1}{2}-\frac{2}{d+1}\frac{\frac{1}{2}-\frac{1}{q}}{\frac{1}{p}-\frac{1}{q}}.
  \end{align*}  
  is admissible for~\eqref{eq:interpolation} and leads to the uuper bound for the operator norm 
   $$
     (1+\lambda)^\gamma 
     \quad
     \text{where }
     \gamma = 2\alpha- 2  +  2(1-\theta)   = 2\alpha- \frac{d+1}{p}+\frac{d+1}{q}.
   $$
   In particular, assuming additionally $\frac{1}{p}-\frac{1}{q}\geq \frac{2}{d+2}$ as in the Theorem, one
   finds $\gamma\leq 2\alpha-2 + \frac{1}{p}-\frac{1}{q}$. Given that $p\neq 1$ and $q\neq \infty$ we
   conclude that~\eqref{eq:gamma_conditions} holds under this assumption and the claim is proved for such exponents.
   
   \bigskip
   
   It remains to consider exponents $(p,q)$ as in (c). In that case we define $\theta_\eps:= 
   2dm_{p,q} -d -\eps$ for small $\eps>0$. In the case $m_{p,q}=\frac{1}{p}$ one chooses 
   \begin{align*}
    \theta = \theta_\eps,\quad
    p_1=2,\quad
    q_1 = \frac{1-\theta_\eps}{(\frac{1}{2}-\frac{1}{p}+\frac{1}{q}-\frac{\theta_\eps(d-3)}{2(d+1)})_+},\quad  
    p_2 = \frac{\theta_\eps}{\frac{1}{p}-\frac{1-\theta_\eps}{2}},\quad
    q_2 
    = \frac{\theta_\eps}{\frac{1}{q} -  \frac{1-\theta_\eps}{q_1}} 
  \end{align*}
  and in the case $m_{p,q}=1-\frac{1}{q}$ one takes
   \begin{align*}
    \theta = \theta_\eps,\quad
    q_1=2,\quad
    p_1=\left(\frac{1-\theta_\eps}{(\frac{1}{2}-\frac{1}{p}+\frac{1}{q}-\frac{\theta_\eps(d-3)}{2(d+1)})_+}\right)',
    \quad q_2 = \frac{\theta_\eps}{\frac{1}{q}-\frac{1-\theta_\eps}{2}},\quad 
    p_2 = \frac{\theta_\eps}{\frac{1}{p} -  \frac{1-\theta_\eps}{p_1}}.
  \end{align*}
  A lengthy computation reveals that these choices are admissible for~\eqref{eq:interpolation}
  and the bound for the operator norm is $(1+\lambda)^\gamma$ where 
  $$
    \gamma =
    \begin{cases}
      2\alpha+ \frac{d+1}{p}-\frac{d+1}{q} &, \text{if }
      2dm_{p,q}-\frac{d+1}{p}+\frac{d+1}{q} \geq d-1 \\
      2\alpha + 1-d+2dm_{p,q}+\frac{d+1}{p}-\frac{d+1}{q} + \eps &, \text{if }
      2dm_{p,q}-\frac{d+1}{p}+\frac{d+1}{q} < d-1. 
    \end{cases}
  $$
  Under the additional assumption $\frac{1}{p}-\frac{1}{q}\geq \frac{2}{d+2}$ from the Theorem 
  we get again  $\gamma\leq 2\alpha-2+\frac{1}{p}-\frac{1}{q}$ and in the case $p=1$ or $q=\infty$    
\begin{align*}
  \gamma-(2\alpha-2+\frac{1}{p}-\frac{1}{q})
  &= d+1-2dm_{p,q} - \left(\frac{1}{p}-\frac{1}{q}\right) +\eps \\
  &\leq d+1-2dm_{p,q} - \frac{2d}{d+1}(2m_{p,q}+1) +\eps \\
  &= \frac{d^2+1}{d+1}-\frac{2d(d+3)}{d+1}m_{p,q}  +\eps \\
  &\leq   \frac{d^2+1}{d+1}-\frac{2d(d+3)}{d+1} \frac{d+\alpha}{2d}  +\eps \\
  &\leq \frac{-(3+\alpha)d+1-3\alpha}{d+1}    +\eps 
  < 0,
\end{align*}
which is all we had to show. \qed

\section{Proof of Theorem~\ref{thm:Slambdaalpha}} \label{sec:ProofSlambda}

  We have to prove the estimate
  $$
      \|\e{-\lambda\sqrt{|\cdot|^2-a^2}} m(|\cdot|)   \cF_d h(\cdot)\|_{L^s(A)} \les  \|h\|_{L^p(\R^d)} (1+\lambda)^{\frac{2}{s'}-\frac{2}{p}-\beta}.
  $$
  for $m\in L^\infty([a,b]),\lambda\geq 0$ and $\beta$ as in~\eqref{eq:def_beta}.
  For simplicity we assume $\mu_1=1,\mu_2=2$, i.e., $A=\{\xi\in\R^d: 1<|\xi|\leq 2\}$. We first present the
  bound given by the Hausdorff-Young inequality, so we assume  $d\in\N,1\geq \frac{1}{p_1}\geq
  \frac{1}{2},1\geq \frac{1}{s_1}\geq \frac{1}{p_1'}$. H\"older's inequality implies 
  \begin{align*}
    \|S_\lambda h\|_{L^{s_1}(A)}
    &\les  \|\cF_d h\|_{L^{p_1'}(A)}
    \|\e{-\lambda\sqrt{|\cdot|^2-1}}\|_{L^{\frac{s_1p_1'}{p_1'-s_1}}(A)}  \\
    &\les   \|h\|_{L^p(\R^d)}
    \left(  \int_1^2 \e{-\lambda\frac{s_1p_1'}{p_1'-s_1}\sqrt{r^2-1}}
    r^{d-1}\dr\right)^{\frac{p_1'-s_1}{s_1p_1'}}
    \\
    &\les  \|h\|_{L^p(\R^d)}
    \left(  \int_0^{\sqrt 3} \e{-\lambda\frac{s_1p_1'}{p_1'-s_1}\rho} \rho\drho
    \right)^{\frac{1}{s_1}-\frac{1}{p_1'}} \\
    &\les  \|h\|_{L^p(\R^d)}   (1+\lambda)^{-\frac{2}{s_1}+\frac{2}{p_1'}}. 
  \end{align*}
  This already gives the claim for $d=1$. So let us assume $d\geq 2$ from now on. We interpolate the previous estimate 
  with the following one for $1\geq \frac{1}{p_2}>\frac{1}{p_*(d)},1\geq \frac{1}{s_2}\geq
  \frac{d+1}{(d-1)p_2'}$. From Theorem~\ref{thm:RC} and Theorem~\ref{thm:Tao} we deduce the bound
  \begin{align*}
    \|S_\lambda h\|_{L^{s_2}(A)}
    &\les   \|\cF_d h\, \e{-\lambda\sqrt{|\cdot|^2-1}}\|_{L^{s_2}(A)} \\
    &\les  \left(\int_1^2  
    \e{-\lambda s_2\sqrt{r^2-1}}  \left(\int_{\Ss_r^{d-1}} |\cF_d h|^{s_2} \,\mathrm{d}\sigma_r\right) 
    \dr\right)^{\frac{1}{s_2}}  \\
    &\les   \left(\int_1^2  
    \e{-\lambda s_2\sqrt{r^2-1}}  r^{d-1-\frac{ds}{p'}} \|h\|_{L^p(\R^d)}^{s_2} 
    \dr\right)^{\frac{1}{s_2}}  \\
    &\les   \|h\|_{L^p(\R^d)} \left(\int_1^2  \e{-\lambda s_2\sqrt{r^2-1}}  \dr\right)^{\frac{1}{s_2}} 
    \\
    &\les   \|h\|_{L^p(\R^d)} (1+\lambda)^{-\frac{2}{s_2}}. 
  \end{align*}
  We infer from the Riesz-Thorin Theorem  
  \begin{equation}\label{eq:Slambdabound}
    \|S_\lambda h\|_{L^s(A)}  \les   \|h\|_{L^p(\R^d)}
    (1+\lambda)^{\frac{2}{s'}-\frac{2}{p_1}-\frac{2\theta}{p_1'}} 
  \end{equation}
  whenever  
  \begin{align*}
    \frac{1}{p}=\frac{1-\theta}{p_1}+\frac{\theta}{p_2},\qquad
    \frac{1}{s}=\frac{1-\theta}{s_1}+\frac{\theta}{s_2},\qquad 0\leq \theta\leq 1,\qquad\quad\\ 
    1\geq \frac{1}{p_1}\geq \frac{1}{2},\quad 1\geq \frac{1}{s_1}\geq
    \frac{1}{p_1'},\quad
    1\geq \frac{1}{p_2}> \frac{1}{p_*(d)},\quad 1\geq \frac{1}{s_2}\geq
    \frac{d+1}{(d-1)p_2'}.
  \end{align*}
  In order to get the asserted result we (subsequently) choose for sufficiently small $\tilde\eps>0$
  \begin{align*} 
    &\theta=\min\left\{-1+\frac{d+1}{p}-\frac{d-1}{s'},1,\left(\frac{1}{p}-\frac{1}{2}\right)\left(\frac{1}{p_*(d)}-\frac{1-\tilde\eps}{2}\right)^{-1}
    \right\}, \\
    &\frac{\theta}{p_2} =
    \max\left\{\theta-\frac{d-1}{2}\left(\frac{1}{p}-\frac{1}{s'}\right),\frac{\theta}{p_*(d)}+\frac{\tilde\eps}{2}
    , \frac{1}{p}-1+\theta \right\},\qquad \frac{1-\theta}{p_1} =  \frac{1}{p}-\frac{\theta}{p_2}, \\
     &\max\left\{\frac{1}{s}-1+\theta,\frac{\theta(d+1)}{(d-1)p_2'}\right\} 
     \leq \frac{\theta}{s_2} \leq \min\left\{\theta,\frac{1}{s}-\frac{1-\theta}{p_1'}\right\}  
    ,\qquad 
    \frac{1-\theta}{s_1} = \frac{1}{s}-\frac{\theta}{s_2}.
  \end{align*}
  We briefly explain why this choice is admissible. The inequalities $1\geq
  \frac{1}{p_2}>\frac{1}{p_*(d)}$ and $1\geq \frac{1}{s_2}\geq \frac{d+1}{(d-1)p_2'}$ are immediate consequences of the definition of $p_2,s_2$.
  Moreover, $p_2\geq \frac{\theta}{\frac{1}{p}-1+\theta}$ implies $1\geq \frac{1}{p_1}$ and after some computations one finds that
  $\theta\leq\min\{-1+\frac{d+1}{p}-\frac{d-1}{s'},(\frac{1}{p}-\frac{1}{2})(\frac{1}{p_*(d)}-\frac{1}{2}+\frac{\tilde\eps}{2})^{-1}
  \}$ implies $\frac{1}{p_1}\geq \frac{1}{2}$. Finally, $s_2\leq \frac{\theta}{\frac{1}{s}-1+\theta}$ yields
  $1\geq \frac{1}{s_1}$ and $s_2\geq \frac{\theta}{\frac{1}{s}-\frac{1-\theta}{p_1'}}$ gives
  $\frac{1}{s_1}\geq \frac{1}{p_1'}$. With this choice we obtain for $\tilde\eps$ sufficiently small (in
  particular $\tilde\eps\leq \eps$)
  \begin{align*}
    & -\frac{2}{p_1}-\frac{2\theta}{p_1'} \\
    &=  - 2\theta-\frac{2(1-\theta)}{p_1} \\
    &=  -\frac{2}{p} -2\theta + \frac{2\theta}{p_2} \\
    &=  -\frac{2}{p} -2\theta + 2
    \max\left\{\theta-\frac{d-1}{2}\left(\frac{1}{p}-\frac{1}{s'}\right),\frac{\theta}{p_*(d)}+\frac{\tilde\eps}{2},
    \frac{1}{p}-1+\theta\right\} \\
    &=   -\frac{2}{p} -
    \min\left\{  \frac{d-1}{p}-\frac{d-1}{s'} ,
    \frac{2\theta}{p_*(d)'}- \tilde\eps,
    \frac{2}{p'}\right\}    \\
    &\leq  -\frac{2}{p} -
    \min\left\{ \frac{d-1}{p}-\frac{d-1}{s'},
    \frac{2(\frac{d+1}{p}-\frac{d-1}{s'}-1)}{p_*(d)'}-\eps,
    \frac{2}{p_*(d)'}-\eps,
    \frac{\frac{2}{p_*(d)'}(\frac{1}{p}-\frac{1}{2})}{\frac{1}{p_*(d)}-\frac{1}{2}} -\eps,   
    \frac{2}{p'}\right\}  \\
    &=  -\frac{2}{p} -
    \min\left\{ \frac{d-1}{p}-\frac{d-1}{s'},
    \frac{2(\frac{d+1}{p}-\frac{d-1}{s'}-1)}{p_*(d)'}-\eps,
    \frac{\frac{2}{p_*(d)'}(\frac{1}{p}-\frac{1}{2})}{\frac{1}{p_*(d)}-\frac{1}{2}} -\eps,   
    \frac{2}{p'}\right\}.   
  \end{align*}
  Here, the last equality comes from the fact that the third number inside the bracket of the second
  last line  lies between the fourth and the fifth number. Combining this with~\eqref{eq:Slambdabound} gives
  the desired bound.

\section{Proof of Proposition~\ref{prop:smallfreq}} \label{sec:smallfreq}

  In this section we prove Proposition~\ref{prop:smallfreq} dealing with the small frequency part 
  $w_\eps$ of the solution of the perturbed Helmholtz equation. In order to avoid heavy notation we
  carry out the estimates for $w=\lim_{\eps\searrow 0}w_{\eps}$ in detail and briefly discuss the
  necessary modifications afterwards. We recall from \eqref{def_wjWj} the formula
  \begin{align}\label{eq:formula_wj}
    \begin{aligned}
    w(x,y)&:=  \cF_{n-1}^{-1}\left(  \e{\i|y|\nu_{1}} (\ind_{|\cdot|\leq \mu_1} 
     m_1  g_+ +\ind_{|\cdot|\leq \mu_1}  m_2  g_-)\right)(x) \\
    &\qquad + \cF_{n-1}^{-1}\left( \e{\i|y|\nu_{2}} (\ind_{|\cdot|\leq \mu_1} m_3  g_+
    +  \ind_{|\cdot|\leq \mu_2} m_4 g_-)   \right)(x)
    \end{aligned}
  \end{align}
  where $m_1,\ldots,m_4$ were introduced in~\eqref{eq:def_mj}. We recall $\nu_j(\xi)=\sqrt{\mu_j^2-|\xi|^2}$
  for $|\xi|\leq \mu_j$ and $j=1,2$.
  We  have to prove the estimate
  $$ 
    \|w\|_{L^q(\R^n)}  + \sup_{0<|\eps|\leq 1} \|w_\eps\|_{L^q(\R^n)} \les \|f\|_{L^p(\R^n)}.
  $$ 
  under the assumptions  $\frac{1}{p}>\frac{1}{p_*(n)}$, $\frac{1}{q}< \frac{1}{q_*(n)}$ and
  $\frac{1}{p}-\frac{1}{q}\geq \frac{2}{n+1}$. 

\medskip

\noindent\textbf{Proof of Proposition~\ref{prop:smallfreq}:}
  For every fixed $x\in\R^{n-1},y\in\R$ we have  
    \begin{align*}
      w(x,y)
      &= (2\pi)^{\frac{1-n}{2}}   \int_{|\xi|\leq \mu_1}
      \e{\i(x\cdot\xi+|y|\nu_1(\xi))} (m_1(\xi )g_+(\xi)+m_2( \xi )g_-(\xi)) \dxi \\
      &\quad + (2\pi)^{\frac{1-n}{2}}   \int_{|\xi|\leq \mu_2}
      \e{\i(x\cdot\xi+|y|\nu_2(\xi))} (\ind_{|\xi|\leq \mu_1} m_3(\xi)g_+(\xi)+m_4( \xi )g_-(\xi)) \dxi
         \\
      &= (2\pi)^{\frac{1-n}{2}}  \int_{\Ss^{n-1}_{\mu_1}}
      \e{\i(x \cdot\xi+|y|\eta)}  \frac{m_1(\xi)g_+(\xi)+m_2(\xi)g_-(\xi)}{(1+|\nabla
      \nu_1(\xi)|^2)^{\frac{1}{2}}} 1_{(0,\infty)}(\eta)  \, \mathrm{d}\sigma_{\mu_1}(\xi,\eta)   \\
      &\quad + (2\pi)^{\frac{1-n}{2}}  \int_{\Ss^{n-1}_{\mu_2}}
      \e{\i(x \cdot\xi+|y|\eta)}  \frac{\ind_{|\xi|\leq \mu_1} m_3(\xi)g_+(\xi)+m_4(\xi)g_-(\xi)}{
      (1+|\nabla \nu_2(\xi)|^2)^{\frac{1}{2}}} 1_{(0,\infty)}(\eta)  \, \mathrm{d}\sigma_{\mu_2}(\xi,\eta).
    \end{align*}
    Next we use Theorem~\ref{thm:RC} in the case $n=2$ and Tao's Fourier Restriction Theorem
    (Theorem~\ref{thm:Tao}) in the case $n\geq 3$. In both cases, $s:= \left(\frac{n-1}{n+1} \, q\right)'$.
    Using the estimates $|m_1|\les |\nu_1|^{-1},|m_2|+|m_3|\les 1,|m_4|\les |\nu_2|^{-1}$, which follow
    from~\eqref{eq:mj_estimates},  we obtain
   \begin{align*}
     \|w\|_{L^q(\R^n)} 
      &\les   \|(m_1 g_+ +m_2 g_-) (1+|\nabla
      \nu_1|^2)^{-\frac{1}{2}}\|_{L^s(\Ss^{n-1}_{\mu_1})} \\
      &\quad + \|(\ind_{|\xi|\leq \mu_1} m_3 g_+ +m_4g_-) (1+|\nabla
      \nu_2|^2)^{-\frac{1}{2}}\|_{L^s(\Ss^{n-1}_{\mu_2})}  \\
      &\les   \| (|\nu_1|^{-1}|g_+|+ |g_-|) (1+|\nabla   \nu_1|^2)^{-\frac{1}{2}}\|_{L^s(\Ss^{n-1}_{\mu_1})}
      \\
      &\quad + \| (\ind_{|\xi|\leq \mu_1} |g_+|+ |\nu_2|^{-1} |g_-|) 
      (1+|\nabla \nu_2|^2)^{-\frac{1}{2}}\|_{L^s(\Ss^{n-1}_{\mu_2})}  \\
      &\stackrel{\eqref{eq:estimate_nu}}\les   \| |g_+|+ |\nu_1| |g_-| \|_{L^s(\Ss^{n-1}_{\mu_1})} + 
      \| \ind_{|\xi|\leq \mu_1} |\nu_2| |g_+|+ |g_-|\|_{L^s(\Ss^{n-1}_{\mu_2})}  \\
      &\les   \| |g_+|+ |g_-| \|_{L^s(\Ss^{n-1}_{\mu_1})} + 
      \| \ind_{|\xi|\leq \mu_1}  |g_+|+ |g_-|\|_{L^s(\Ss^{n-1}_{\mu_2})}  \\
      &\les   \|g_+\|_{L^s(\Ss^{n-1}_{\mu_1})} +  \|g_-\|_{L^s(\Ss^{n-1}_{\mu_2})}  \\
      &\les \|\cF_n^+f\|_{L^s(\Ss^{n-1}_{\mu_1})} + \| \cF_n^-f\|_{L^s(\Ss^{n-1}_{\mu_2})} .
  \end{align*}
  Since $\frac{1}{p}-\frac{1}{q}\geq \frac{2}{n+1}$ implies $s'\geq \big(\frac{n-1}{n+1} \,
  p'\big)'$ and $p'>\frac{2(n+2)}{n}$, Theorem~\ref{thm:Tao} applies and we get
  $$
     \|w\|_{L^q(\R^n)} 
    \les \|\cF_n^+f\|_{L^s(\Ss^{n-1}_{\mu_1})} + \| \cF_n^-f\|_{L^s(\Ss^{n-1}_{\mu_2})}
    \les \|f\|_{L^p(\R^n)},
  $$
  which is all we had to show. Here we used $\cF_n^\pm f =\cF_n(f_\pm)$ where
  $f_\pm(x,y)=f(x,y)1_{(0,\infty)}(\pm y)$.
  
   \medskip
   
   Now we indicate the necessary modifications to get the corresponding uniform estimates for
   $w_\eps$ with respect to $\eps\in (0,1]$. Here, each $\nu_j$ in~\eqref{eq:formula_wj} is replaced by 
   $\nu_{j,\eps}$ where $\nu_{j,\eps}(\xi)^2 =  \mu_j^2-|\xi|^2+\i\eps$ and $\Imag(\nu_{j,\eps}(\xi))>0$.
   In this case we obtain the same estimates as above because the sets  
   $\{(\xi,\Real(\nu_{j,\eps}(\xi))):|\xi|\leq  \mu_j\}
   $ are regular hypersurfaces with the property that the Gaussian curvature has a positive lower bound 
   independent of $\eps$. For such surfaces Tao's result remains true and we may thus argue as above. Notice
   that the positive imaginary part of $\nu_{1,\eps},\nu_{2,\eps}$ lead to  damping factors 
   $\e{-|y|\Imag(\nu_{j,\eps}(\xi))}$ in the integrals that may be estimated from above by one.
\qed

\section{Proof of Proposition~\ref{prop:interferencefreq}} \label{sec:interferencefreq}

  In this section we bound (the first part of) the intermediate frequency terms given by 
\begin{align*}
  \mathfrak w(x,y) 
  &= \cF_{n-1}^{-1}\left( \e{\i y\nu_1} 1_A m_2 g_- + \e{\i y \nu_2} 1_A m_3 g_+\right)(x) \\
  &\stackrel{\eqref{eq:def_mj}}= 1_{y>0} \cF_{n-1}^{-1}\left( \e{\i y\nu_1} 1_A m^*
  \cF_n^-f(\cdot,-\nu_2(\cdot) \right)(x) \\
    &\qquad + 1_{y<0} \cF_{n-1}^{-1}\left( \e{\i y \nu_2} 1_A m^* \cF_n^+f(\cdot,-\nu_1(\cdot))\right)(x) 
\end{align*}
where $m^*(\xi) :=   \i\sqrt{2\pi}(\nu_1(\xi)+\nu_2(\xi))^{-1}$.
  Here, $A=\{\xi\in\R^{n-1}: \mu_1<|\xi|\leq \mu_2\}$ so that $\xi\in A$ implies   
  $\nu_1(\xi)= \i(|\xi|^2-\mu_1)^{1/2}$ and $\nu_2(\xi)=(\mu_2^2-|\xi|^2)^{-1/2}$, see~\eqref{eq:nu}.
For a bounded complex-valued function $\mathfrak m\in L^\infty(A)$   
we define the linear operators
\begin{align*}
  Q_{\mathfrak m} h(x,y)
  &:= 1_{y<0}  \cF_{n-1}^{-1}\left(
     \e{\i y\nu_{2}} \ind_{A}  \mathfrak m \cF_n^+h(\cdot,-\nu_1(\cdot))\right)(x)    
\end{align*}
that we will prove to be bounded from $L^p(\R^n)$ to $L^q(\R^n)$ under the given assumptions on $p,q$. 
Its adjoint is then bounded from $L^{q'}(\R^n)$ to $L^{p'}(\R^n)$ and hence as well for all $p,q$ according
to the assumptions. It is given by the formula   
\begin{align*}
  Q_{\mathfrak m}^*h(x,y)
  &:= 1_{y>0} \cF_{n-1}^{-1}\left(
     \e{\i y\nu_1} \ind_{A}  \ov{\mathfrak m} \cF_n^- h(\cdot,\nu_2(\cdot))\right)(x) 
\end{align*}
because we have for all $h_1,h_2\in\mathcal S(\R^n)$
\begin{align*}
  & \int_\R \int_{\R^{n-1}} Q_{\mathfrak m} h_1(x,y) \ov{h_2(x,y)}  \,\mathrm{d}x \, \mathrm{d}y \\ 
  &= \int_{-\infty}^0 \int_{\R^{n-1}}  
     \e{\i y \nu_2(\xi) } \ind_A(\xi) \mathfrak m(\xi)  
     \cF_n^+ h_1(\xi,-\nu_1(\xi)) \overline{\cF_{n-1}[h_2(\cdot,y)](\xi)} \,\mathrm{d}\xi \,
     \mathrm{d}y\\
 &=   \int_{\R^{n-1}}     
	 \ind_A(\xi) \mathfrak m(\xi)   \cF_n^+h_1(\xi,-\nu_1(\xi)) 
	 \overline{\int_{-\infty}^0 
	 \e{-\i y \nu_2(\xi)} 
	 \mathcal{F}_{n-1}[h_2(\cdot, y)](\xi) 
	 \, \mathrm{d}y} \,\mathrm{d}\xi\\
 &=   \int_{\R^{n-1}}       
	 \ind_A(\xi) \mathfrak m(\xi)  \cF_n^+h_1(\xi,-\nu_1(\xi)) 
	  \cdot (2\pi)^\frac{1}{2}
	 \overline{  \cF_n^- h_2(\xi,\nu_2(\xi)) 
	 } \,\mathrm{d}\xi\\	 
 &=    \int_{\R^{n-1}} \left(\int_0^\infty       
	 \ind_A(\xi)  \cF_{n-1}[h_1(\cdot,y)](\xi) \e{\i\nu_1(\xi)}  \dy 
	 \right)  
	  \mathfrak m(\xi) 
	 \overline{  \cF_n^- h_2(\xi,\nu_2(\xi)) 
	 } \,\mathrm{d}\xi\\
&=   \int_0^\infty \int_{\R^{n-1}}        
	  \cF_{n-1}[h_1(\cdot,y)](\xi) \:
	 \overline{  
	   \e{\i\nu_1(\xi)} \ind_A(\xi)  \ov{\mathfrak m(\xi)}
	 \cF_n^- h_2(\xi,\nu_2(\xi)) 
	      } \,\mathrm{d}\xi \dy\\
&=   \int_0^\infty \int_{\R^{n-1}}        
	  h_1(x,y) \:
	 \overline{\cF_{n-1}^{-1}\left(  
	   \e{\i\nu_1} \ind_A   \ov{\mathfrak m}
	 \cF_n^- h_2(\cdot,\nu_2(\cdot)) 
	      \right)(x)} \,\mathrm{d}x\dy\\
	&=  \int_\R \int_{\R^{n-1}} h_1(x,y) \ov{Q_{\ov{\mathfrak m}}^*h_2 (x,y)}  
	 \,\mathrm{d}x   \,\mathrm{d}y.  
\end{align*}
The following result tells us that it is sufficient to find $L^p-L^q$-bounds for $Q_{\mathfrak m}$.

\begin{prop} \label{prop:interference_representation}
  For $x\in\R^{n-1},y>0$ we have
  $$
    \mathfrak w(x,y)  =  (Q_{\ov{m^*}}^* f)(x,y)   + (Q_{m^*}f)(x,y).
  $$
\end{prop}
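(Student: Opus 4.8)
The plan is to prove the identity by simply unwinding the definitions on the half-space $\{y>0\}$ and matching the two sides. The mechanism is elementary: the coefficients $m_2,m_3$ entering the definition~\eqref{def_wjWj} of $\mathfrak w$ carry the $\sign(y)$-dependence exhibited in~\eqref{eq:def_mj}, so on $\{y>0\}$ they degenerate into $m^*$ and $0$ respectively; at the same time $Q_{m^*}f$ vanishes on $\{y>0\}$ because of the cutoff $1_{y<0}$ built into the definition of $Q_{\mathfrak m}$. Hence on $\{y>0\}$ the whole right-hand side reduces to $Q_{\ov{m^*}}^*f$, and the claim amounts to verifying that this equals $\mathfrak w$ there.

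Concretely, I would fix $x\in\R^{n-1}$ and $y>0$, so that $\sign(y)=1$ and $|y|=y$. By~\eqref{eq:def_mj},
\[
  m_2(\xi)=\frac{\i\sqrt{\pi/2}}{\nu_1(\xi)+\nu_2(\xi)}\,(1+1)=\frac{\i\sqrt{2\pi}}{\nu_1(\xi)+\nu_2(\xi)}=m^*(\xi),
  \qquad
  m_3(\xi)=\frac{\i\sqrt{\pi/2}}{\nu_1(\xi)+\nu_2(\xi)}\,(1-1)=0,
\]
so~\eqref{def_wjWj} collapses on $\{y>0\}$ to
\[
  \mathfrak w(x,y)=\cF_{n-1}^{-1}\bigl(\e{\i y\nu_1}\,\ind_A\,m^*\,g_-\bigr)(x),
  \qquad A=\{\mu_1<|\cdot|\le\mu_2\},\quad g_-(\xi)=\cF_n^-f(\xi,\nu_2(\xi)).
\]
On the other hand, the factor $1_{y<0}$ forces $Q_{m^*}f(x,y)=0$, while inserting $\mathfrak m=\ov{m^*}$ (so that $\ov{\mathfrak m}=m^*$) into the formula for $Q_{\mathfrak m}^*$ derived just before Proposition~\ref{prop:interference_representation} gives, for $y>0$,
\[
  Q_{\ov{m^*}}^*f(x,y)=\cF_{n-1}^{-1}\bigl(\e{\i y\nu_1}\,\ind_A\,m^*\,\cF_n^-f(\cdot,\nu_2(\cdot))\bigr)(x)
  =\cF_{n-1}^{-1}\bigl(\e{\i y\nu_1}\,\ind_A\,m^*\,g_-\bigr)(x).
\]
Comparing the last two displays yields $\mathfrak w(x,y)=Q_{\ov{m^*}}^*f(x,y)=Q_{\ov{m^*}}^*f(x,y)+Q_{m^*}f(x,y)$ for all $y>0$, which is the assertion.

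I do not expect a genuine analytic obstacle here; the whole argument is a substitution. The only point requiring care is the bookkeeping of the $\sign(y)$-dependence concealed in $m_2$ and $m_3$ — precisely what turns $m_2$ into $m^*$ and annihilates $m_3$ on $\{y>0\}$ — together with the one-sided cutoffs $1_{y<0}$ and $1_{y>0}$ that distinguish $Q_{m^*}$ from $Q_{\ov{m^*}}^*$ and leave $Q_{\ov{m^*}}^*f$ as the only surviving contribution for positive $y$.
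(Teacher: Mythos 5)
Your proof is correct and uses exactly the mechanism the paper relies on (it embeds the verification in the display preceding the proposition rather than giving a separate proof): on $\{y>0\}$ one has $\sign(y)=1$, so $m_2=m^*$ and $m_3=0$ by~\eqref{eq:def_mj}, while the cutoff $1_{y<0}$ kills $Q_{m^*}f$, leaving $\mathfrak w = Q_{\ov{m^*}}^*f$ by direct substitution of $\mathfrak m = \ov{m^*}$ and $g_-=\cF_n^-f(\cdot,\nu_2(\cdot))$ into the formula for $Q_{\mathfrak m}^*$. The bookkeeping of $|y|$ versus $y$ is unproblematic here precisely because $m_3$ vanishes on $\{y>0\}$, so your argument is complete.
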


Our bounds for $Q_m$ rely on Theorem~\ref{thm:Slambdaalpha}.

\begin{lem} \label{lem:Q_bounds}
  Let $n\in\N,n\geq 2$ and $\mathfrak m\in L^\infty(A)$. Then the linear operator $Q_{\mathfrak
  m}:L^p(\R^n)\to L^q(\R^n)$ is bounded whenever 
  $\frac{1}{p}>\frac{1}{p_*(n)},\frac{1}{q}<\frac{1}{q_*(n)},\frac{1}{p}-\frac{1}{q}\geq
  \frac{2}{n+1}$. In particular, this holds for all $(p,q)\in\tilde {\mathcal D}$. 
  If the Restriction Conjecture is true then it is bounded 
  whenever  $\frac{1}{p}>\frac{n+1}{2n},\frac{1}{q}<\frac{n-1}{2n},\frac{1}{p}-\frac{1}{q}\geq
  \frac{2}{n+1}$ and hence for all $(p,q)\in \mathcal D$. 
\end{lem}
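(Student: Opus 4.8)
The plan is to show that the single operator $Q_{\mathfrak m}$ maps $L^p(\R^n)$ to $L^q(\R^n)$ (uniformly for $\mathfrak m\in L^\infty(A)$ with norm $\le 1$) for every $(p,q)$ in the stated range; since that range is invariant under $(p,q)\mapsto(q',p')$, the boundedness of $Q_{\mathfrak m}^*$ on the same range then follows from the adjoint identity established just before the statement. The idea is to factor $Q_{\mathfrak m}$ through $L^s(A)$ (with $A\subset\R^{n-1}$) so that the exponential factor hidden inside $\cF_n^+h(\cdot,-\nu_1(\cdot))$ is treated by Theorem~\ref{thm:Slambdaalpha} in dimension $d=n-1$, while the oscillatory factor $\e{\i y\nu_2}$ on the output side is a Fourier extension from the sphere $\Ss^{n-1}_{\mu_2}\subset\R^n$, handled by Tao's theorem (Theorem~\ref{thm:Tao}) for $n\ge3$ and by Fefferman--Zygmund (Theorem~\ref{thm:RC}) for $n=2$.

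Concretely, for $h\in\mathcal S(\R^n)$ and $\xi\in A$ one has $\nu_1(\xi)=\i\sqrt{|\xi|^2-\mu_1^2}$, hence
$$
  \cF_n^+h(\xi,-\nu_1(\xi))=(2\pi)^{-\frac12}\int_0^\infty \cF_{n-1}[h(\cdot,z)](\xi)\,\e{-z\sqrt{|\xi|^2-\mu_1^2}}\dz ,
$$
so $1_A\cF_n^+h(\cdot,-\nu_1(\cdot))=(2\pi)^{-\frac12}\int_0^\infty S_z[h(\cdot,z)]\dz$, where $S_z$ is the operator from~\eqref{eq:def_Slambda} built with inner radius $a=\mu_1$, symbol $m\equiv1$ and parameter $\lambda=z$. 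On the output side, parametrising the upper cap $\{(\xi,\nu_2(\xi)):\xi\in A\}$ of $\Ss^{n-1}_{\mu_2}$ (where $\mathrm d\sigma_{\mu_2}=\mu_2\,\nu_2(\cdot)^{-1}\,\mathrm d\xi$) one finds
$$
  \cF_{n-1}^{-1}\big(\e{\i y\nu_2}1_A\mathfrak m\,g\big)(x)=\cF_n^{-1}\big(\Phi_g\,\mathrm d\sigma_{\mu_2}\big)(x,y)\qquad(x\in\R^{n-1},\ y\in\R),
$$
where $\Phi_g$ is supported on that cap and equals a fixed multiple of $\nu_2(\cdot)\mathfrak m(\cdot)g(\cdot)$ there. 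Thus $Q_{\mathfrak m}h$ is, up to the harmless factor $1_{y<0}$, the composition of $h\mapsto 1_A\cF_n^+h(\cdot,-\nu_1(\cdot))$, with values in $L^s(A)$, and $g\mapsto\cF_n^{-1}(\Phi_g\,\mathrm d\sigma_{\mu_2})$.

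I would then fix $\tilde s:=\bigl(\tfrac{n-1}{n+1}q\bigr)'$, which is finite and $>1$ since $q>q_*(n)>\tfrac{n+1}{n-1}$, and take $s=\tilde s$. The adjoint restriction estimate~\eqref{eq:RC} in dimension $n$, applicable because $\tfrac1q<\tfrac1{q_*(n)}$ forces $q>q_*(n)$, gives $\|\cF_n^{-1}(\Phi_g\,\mathrm d\sigma_{\mu_2})\|_{L^q(\R^n)}\les\|\Phi_g\|_{L^{\tilde s}(\Ss^{n-1}_{\mu_2})}\les\|g\|_{L^s(A)}$, the last step using $\tilde s\ge1$ (so $\nu_2^{\tilde s-1}$ is bounded on $A$) and $|A|<\infty$. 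Theorem~\ref{thm:Slambdaalpha} with $d=n-1$ gives $\|S_z[h(\cdot,z)]\|_{L^s(A)}\les(1+z)^{\frac2{s'}-\frac2p-\beta}\|h(\cdot,z)\|_{L^p(\R^{n-1})}$ with $\beta$ as in~\eqref{eq:def_beta} (and $\beta=0$ for $n=2$); here $\tfrac1{s'}=\tfrac{n+1}{(n-1)q}$, and the requirement $\tfrac1s\ge\tfrac1{p'}$ of that theorem reads $\tfrac1p\ge\tfrac{n+1}{(n-1)q}$, which follows from $\tfrac1p>\tfrac1{p_*(n)}$ and $\tfrac1q<\tfrac1{q_*(n)}$ for $n\ge2$. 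Minkowski's inequality followed by Hölder's inequality in $z$ over $(0,\infty)$ yields
$$
  \|Q_{\mathfrak m}h\|_{L^q(\R^n)}\les\int_0^\infty(1+z)^{\frac2{s'}-\frac2p-\beta}\|h(\cdot,z)\|_{L^p(\R^{n-1})}\dz\les\|h\|_{L^p(\R^n)},
$$
provided $\bigl(\tfrac2{s'}-\tfrac2p-\beta\bigr)p'<-1$.

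The main obstacle is precisely to verify this last condition, i.e. that for $(p,q)$ in the stated range (with $s=\tilde s$, so $\tfrac1{s'}=\tfrac{n+1}{(n-1)q}$) one has $\beta>\tfrac{2(n+1)}{(n-1)q}-\tfrac3p+1$. Unfolding the minimum in~\eqref{eq:def_beta} with $d=n-1$ turns this into finitely many affine inequalities in $\tfrac1p,\tfrac1q$, each of which I would verify using $\tfrac1p>\tfrac1{p_*(n)}$, $\tfrac1q<\tfrac1{q_*(n)}$ and $\tfrac1p-\tfrac1q\ge\tfrac2{n+1}$ (respectively $\tfrac1p>\tfrac{n+1}{2n}$, $\tfrac1q<\tfrac{n-1}{2n}$, $\tfrac1p-\tfrac1q\ge\tfrac2{n+1}$ together with the improved exponents $p_*(d)=\tfrac{2d}{d+1}$ in~\eqref{eq:def_beta} and $\tfrac{2n}{n-1}$ in the restriction estimate when the Restriction Conjecture is assumed); in the case $n=2$, where $\beta=0$, the condition reduces to $\tfrac1p>\tfrac13+\tfrac2q$, which is implied by $\tfrac1p-\tfrac1q\ge\tfrac23$ and $q>3$. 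Once this is done, $Q_{\mathfrak m}:L^p\to L^q$ is bounded on the whole symmetric range (hence so is $Q_{\mathfrak m}^*$), the estimates pass from $\mathcal S(\R^n)$ to $L^p(\R^n)$ by density, and $\tilde{\mathcal D}\subset\{\tfrac1p>\tfrac1{p_*(n)},\ \tfrac1q<\tfrac1{q_*(n)},\ \tfrac1p-\tfrac1q\ge\tfrac2{n+1}\}$ gives the ``in particular'' part.
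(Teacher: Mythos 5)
Your argument is exactly the paper's, step for step: interpret $Q_{\mathfrak m}h$ as a Fourier extension from the cap $\{(\xi,\nu_2(\xi)):\xi\in A\}$ of $\Ss^{n-1}_{\mu_2}$ and apply Theorem~\ref{thm:RC} ($n=2$) or Theorem~\ref{thm:Tao} ($n\geq 3$) with $s=\bigl(\tfrac{n-1}{n+1}q\bigr)'$; unfold $\cF_n^+h(\cdot,-\nu_1(\cdot))$ as $\int_0^\infty S_z[h(\cdot,z)]\,\mathrm{d}z$ with $S_z$ from~\eqref{eq:def_Slambda} in dimension $d=n-1$ and $\lambda=z$; invoke Theorem~\ref{thm:Slambdaalpha}; then close via Minkowski and Hölder under the exponent condition $\beta+\tfrac{3}{p}-\tfrac{2}{s'}-1>0$. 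You also correctly identify and check the side condition $\tfrac{1}{s}\geq\tfrac{1}{p'}$ required by Theorem~\ref{thm:Slambdaalpha}.

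The gap is the verification of that final inequality for $n\geq 3$ (and for the Restriction-Conjecture variant). You correctly observe that it reduces, via the four-term minimum in~\eqref{eq:def_beta} with $d=n-1$, to finitely many affine inequalities in $\bigl(\tfrac{1}{p},\tfrac{1}{q}\bigr)$, and you settle $n=2$ where $\beta=0$. But for $n\geq3$ you only assert you \emph{would} verify them; you do not. That verification is the genuine substance of the lemma: the paper spends close to a page showing that each of the four terms in~\eqref{eq:def_beta}, when added to $\tfrac{3}{p}-\tfrac{2}{s'}-1$, stays strictly positive on the range $\tfrac{1}{p}>\tfrac{1}{p_*(n)}$, $\tfrac{1}{q}<\tfrac{1}{q_*(n)}$, $\tfrac{1}{p}-\tfrac{1}{q}\geq\tfrac{2}{n+1}$, and repeats the computation with $p_*(n-1)$ replaced by $\tfrac{2(n-1)}{n}$ for the Restriction-Conjecture version. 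These inequalities are not loose — one of them needs an extra case distinction at $n=3$, and the lower bounds land close to zero — so they cannot simply be asserted. Until those computations are carried out, the stated range of exponents is unsubstantiated and the proof is incomplete.
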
 
\begin{proof}
  We have  
\begin{align*}
 	Q_{\mathfrak m}h(x,y)
 	&=  \cF_{n-1}^{-1}\left(
     \e{\i y\nu_{2}} \ind_{A}  \mathfrak m \cF_n^+h(\cdot,-\nu_1(\cdot))\right)(x) \cdot 1_{y<0}\\
 	& =    (2\pi)^{\frac{1-n}{2}} \int_{|\xi| \leq \mu_2} 
 	\e{\i (x \cdot \xi - y \nu_2(\xi))} \ind_{A}(\xi) \mathfrak m(\xi) \cF_n^+h(\xi,-\nu_1(\xi)) 
 	\: \mathrm{d}\xi \cdot 1_{y<0}\\
 	& =    (2\pi)^{\frac{1-n}{2}} \int_{\mathbb{S}^{n-1}_{\mu_2}} 
 	\e{\i (x \cdot \xi - y \eta)} \ind_{A}(\xi) \ind_{\eta>0} \mathfrak{m}(\xi) 
 	\frac{\cF_n^+h(\xi,-\nu_1(\xi))}{(1 + |\nabla \nu_2(\xi)|^2)^{-\frac{1}{2}}}
 	\: 	\mathrm{d}\sigma_{\mu_2}(\xi,\eta) \cdot 1_{y<0}
\end{align*}
  In the case $n=2,n\geq 3$ we use Theorem~\ref{thm:RC}, Theorem~\ref{thm:Tao}, respectively. Due to 
  $\frac{1}{q}<\frac{1}{q_*(n)}$ and $s:=\left( \frac{n-1}{n+1} q \right)'$  we get the bound   
\begin{align*}
  \|Q_{\mathfrak m}h\|_{L^q(\R^n)}
 &\les
 	 \norm{\ind_{A}  \mathfrak{m}  \cF_n^+h(\cdot,-\nu_1(\cdot)) 
 	(1 + |\nabla \nu_2(\cdot)|^2)^{-\frac{1}{2}}
 	}_{L^s(\mathbb{S}^{n-1}_{\mu_2})}
 	\\
   &\les \|\mathfrak{m}\|_\infty
 	 \norm{ \ind_{A}  \cF_n^+h(\cdot,-\nu_1(\cdot)) }_{L^s(\mathbb{S}^{n-1}_{\mu_2})}\\	
   &\les \|\mathfrak{m}\|_\infty   
 		\left( \int_{A}
 		 \left| \int_0^\infty \cF_{n-1}[f (\cdot , z)](\xi) \e{-z \sqrt{|\xi|^2 - \mu_1^2}} \, \mathrm{d}z
 		 \right|^s \, \mathrm{d}\xi
 		\right)^\frac{1}{s}.
 		\\
 		\intertext{Minkowski's inequality and Theorem~\ref{thm:Slambdaalpha} imply for $\beta$ and $d=n-1$ 
 		as in~\eqref{eq:def_beta}} 
 		\|Q_{\mathfrak m}h\|_{L^q(\R^n)}	
 	&\les \|\mathfrak{m}\|_\infty 
 		 \int_0^\infty 
 		 \left( \int_{A}
 		 \left|\cF_{n-1}[h (\cdot , z)](\xi) \e{-z \sqrt{|\xi|^2 - \mu_1^2}} 
 		\right|^s 
 		\, \mathrm{d}\xi
 		\right)^\frac{1}{s} 
 		 \, \mathrm{d}z 
 		 \\
 		 &\les \|\mathfrak{m}\|_\infty
 		 \int_0^\infty \|h(\cdot,z)\|_{L^p(\R^{n-1})} (1+z)^{\frac{2}{s'}-\frac{2}{p}-\beta}  \, \mathrm{d}z \\
 		 & \les \|\mathfrak{m}\|_\infty \norm{h}_{L^p(\R^n)}
\end{align*}
provided that $(\frac{2}{s'}-\frac{2}{p}-\beta)p'<-1$ or equivalently $\beta+\frac{3}{p}-\frac{2}{s'}-1>0$.
To prove the main statement of the Lemma, it remains to check this condition for all $(p,q)\in
\tilde{\mathcal D}$. Indeed, in the case $n=2$, where $d=n-1=1$, this follows from $\beta=0$ and  
$$
  \frac{3}{p}-\frac{2}{s'}-1
  = 3\left(\frac{1}{p}-\frac{1}{q}\right) - \frac{3}{q}-1
  > 2  - \frac{3}{4} -1
  >0.
$$ 
In the case $d=n-1\geq 2$ this is a consequence of the definition of
$\beta$ from~\eqref{eq:def_beta}. We have
\begin{align*}
  \left(\frac{n-2}{p}-\frac{n-2}{s'}\right)+\frac{3}{p}-\frac{2}{s'}-1
  &= \frac{n+1}{p}-\frac{n}{s'}-1 
  \;=\; \frac{n+1}{p}-\frac{n(n+1)}{(n-1)q}-1 \\
  &= (n+1)\left(\frac{1}{p}-\frac{1}{q}\right) - \frac{n+1}{(n-1)q}-1 \\
  &>  2 - \frac{n+1}{n-1}\cdot \frac{n}{2(n+2)} - 1\\
  &= \frac{n^2+n-4}{2(n-1)(n+2)} \\
  & > 0, \\ 
  \frac{2}{p'}+\frac{3}{p}-\frac{2}{s'}-1
  &=  \frac{1}{p}-\frac{2(n+1)}{(n-1)q} + 1 \\
  &= \left(\frac{1}{p}-\frac{1}{q}\right)  - \frac{n+3}{(n-1)q} +1 \\
  &> \frac{2}{n+1} - \frac{n+3}{n-1}\cdot \frac{n}{2(n+2)} +1  \\%
  &= \frac{(n+3)(n^2+n-4)}{2(n+1)(n-1)(n+2)} \\
  &> 0,  \\ 
  \left(\frac{\frac{2}{(p_*(n-1))'}(\frac{1}{p}-\frac{1}{2})}{\frac{1}{p_*(n-1)}-\frac{1}{2}}\right)
  +\frac{3}{p}-\frac{2}{s'}-1
  &= (n-1)\left(\frac{1}{p}-\frac{1}{2}\right)
  +\frac{3}{p}-\frac{2}{s'}-1 \\ 
  &= \frac{n+2}{p}-\frac{2(n+1)}{(n-1)q}  -\frac{n+1}{2}  \\
  &=  \frac{n^2-n-4}{(n-1)p}  + \frac{2(n+1)}{n-1}\left(\frac{1}{p}-\frac{1}{q}\right)  -\frac{n+1}{2}  \\
  &>  \frac{(n^2-n-4)(n+4)}{2(n-1)(n+2)}  + \frac{4}{n-1}   -\frac{n+1}{2}  \\
  &= \frac{n^2+n+2}{2(n+2)(n-1)}
  \\ 
  & > 0, \\
  \left(\frac{2(\frac{n}{p}-\frac{n-2}{s'}-1)}{p_*(n-1)'}\right)
  +\frac{3}{p}-\frac{2}{s'}-1 
  &= \left(\frac{(n-1)n}{(n+1)p}-\frac{(n-1)(n-2)}{(n+1)s'}-\frac{n-1}{n+1} \right)
  +\frac{3}{p}-\frac{2}{s'}-1 \\
  &=  \frac{n^2+2n+3}{(n+1)p} - \frac{n^2-n+4}{(n+1)s'} - \frac{2n}{n+1} \\
  &=  \frac{n^2+2n+3}{n+1}\left(\frac{1}{p}-\frac{1}{q}\right) + \frac{n^2-2n-7}{(n+1)(n-1)q} -
  \frac{2n}{n+1} \\
  &>  \frac{2(n^2+2n+3)}{(n+1)^2}  + 1_{n=3} \frac{(n^2-2n-7)n}{2(n+1)(n-1)(n+2)} -
  \frac{2n}{n+1} \\
  &=  \frac{2(n+3)}{(n+1)^2}  + 1_{n=3} \frac{(n^2-2n-7)n}{2(n+1)(n-1)(n+2)}    
  \\
  & > 0. 
\end{align*}
Hence, the condition $(\frac{2}{s'}-\frac{2}{p}-\beta)p'<-1$ holds. For the extra claim regarding  
the Restriction Conjecture, is suffices to prove the above estimates (for $n\geq 3$) where
$p_*(n-1)$ is replaced by $\frac{2(n-1)}{n}$ and the estimates
$\frac{1}{p}>\frac{n+4}{2(n+2)},\frac{1}{q}<\frac{n}{2(n+2)}$ are replaced by $\frac{1}{p}>\frac{n-1}{2n},
\frac{1}{q}<\frac{n-1}{2n}$. This can be done as above and one obtains again
$\beta+\frac{3}{p}-\frac{2}{s'}-1>0$ and the proof is finished.
\end{proof}

\medskip

\noindent\textbf{Proof of Proposition~\ref{prop:interferencefreq}:} This is a consequence of
Proposition~\ref{prop:interference_representation} and Lemma~\ref{lem:Q_bounds} because 
$$
  \|\mathfrak w\|_{L^q(\R^n)} 
  \les \|Q_{\ov{m^*}}^* f\|_{L^q(\R^n)}    + \| Q_{m^*}f\|_{L^q(\R^n)}
  \les \|f\|_{L^p(\R^n)}
$$
provided that 
$\frac{1}{p}>\frac{1}{p_*(n)},\frac{1}{q}<\frac{1}{q_*(n)},\frac{1}{p}-\frac{1}{q}\geq \frac{2}{n+1}$ holds.
If the Restriction Conjecture holds, $p_*(n)$ and $q_*(n)$ can be replaced by $\frac{2n}{n+1}$ and
$\frac{2n}{n-1}$, respectively.
\qed

%
%

%

\section{Proof of Proposition~\ref{prop:intermediatefreq}} \label{sec:intermediatefreq}
 
 We now prove our estimates for those intermediate frequencies collected in $\mathfrak{W}_\eps$. As
 above, we avoid the technicalities for $\eps>0$ by considering only the most singular limit term 
  \begin{align*} 
    \begin{aligned}
   \mathfrak{W}(x,y)
    = \lim_{\eps\searrow 0} \mathfrak W_{\eps}(x,y) 
    &=\cF_{n-1}^{-1}\left(  \e{\i|y|\nu_{1}} \ind_{\mu_1<|\cdot|\leq \mu_1+\mu_2} 
    m_1  g_+ 
    + \e{\i|y|\nu_{1}} \ind_{\mu_2<|\cdot|\leq \mu_1+\mu_2}  m_2  g_-\right)(x) \\
    &\;+ \cF_{n-1}^{-1}\left( 
     \e{\i|y|\nu_{2}}\ind_{\mu_2<|\cdot|\leq \mu_1+\mu_2}    m_3  g_+
    + \e{\i|y|\nu_{2}}\ind_{\mu_2<|\cdot|\leq \mu_1+\mu_2} m_4 g_-   \right)(x).
    \end{aligned}
  \end{align*}
  We use $\nu_j(\xi)= \i\sqrt{|\xi|^2-\mu_j^2}$ for $|\xi|>\mu_j$. To prove
  Proposition~\ref{prop:intermediatefreq} wehave to show that for all $n\in\N,n\geq 2$ and all $p,q$ such
  that $\frac{1}{p}>\frac{n+1}{2n}$, $\frac{1}{q}<\frac{n-1}{2n}$ and $\frac{1}{p}-\frac{1}{q}\geq
  \frac{2}{n+1}$ the following estimate holds 
  $$ 
    \|\mathfrak{W}\|_{L^q(\R^n)} + \sup_{0<|\eps|\leq 1}  \|\mathfrak{W}_{\eps}\|_{L^q(\R^n)} \les
    \|f\|_{L^p(\R^n)}.
  $$

\noindent\textbf{Proof of Proposition~\ref{prop:intermediatefreq}:}
  We introduce the annuli $A_j:=\{\xi\in\R^{n-1}: \mu_j\leq |\xi|\leq \mu_1+\mu_2\}$ for $j=1,2$. Then we have
  for fixed $x\in\R^n,y\in\R$ 
\begin{align*}
   \mathfrak{W}(x,y)
   &=  \cF_{n-1}^{-1}\left( 1_{A_1}   
    \e{\i |y|\nu_1} m_1  \cF_n^+f(\cdot,-\nu_1(\cdot))  \right)(x) \\
    &+ \cF_{n-1}^{-1}\left(  1_{A_2}   \e{\i|y|\nu_{1}} m_2     
     \cF_n^-f(\cdot,\nu_2(\cdot)) \right)(x) \\  
    &+      \cF_{n-1}^{-1}\left( 1_{A_2}  \e{\i|y|\nu_{2}}m_3 
     \cF_n^+f(\cdot,-\nu_1(\cdot))\right)(x) \\
     & +  \cF_{n-1}^{-1}\left(
    1_{A_2} \e{\i|y|\nu_{2}}     
    m_4 \cF_n^-f(\cdot,\nu_2(\cdot))\right)(x) \\
   &=  \int_0^\infty \cF_{n-1}^{-1}\left( 1_{A_1}(\xi)
    \e{-(|y|+z)\sqrt{|\xi|^2-\mu_1^2}} m_1(\xi) \cF_{n-1}[f(\cdot,z)](\xi)
     \right)(x) \dz \\
   &+ \int_{-\infty}^0   \cF_{n-1}^{-1}\left( 1_{A_2}(\xi)
    \e{-|y|\sqrt{|\xi|^2-\mu_1^2}+z\sqrt{|\xi|^2-\mu_2^2}} m_2(\xi) \cF_{n-1}[f(\cdot,z)](\xi) \right)(x) \dz
    \\
   &+ \int_0^\infty  \cF_{n-1}^{-1} \left(  1_{A_2}(\xi)
   \e{-|y|\sqrt{|\xi|^2-\mu_2^2}-z\sqrt{|\xi|^2-\mu_1^2}} m_3(\xi)
   \cF_{n-1}[f(\cdot,z)](\xi) \right)(x)\dz \\
    &+ \int_{-\infty}^0   \cF_{n-1}^{-1}\left( 1_{A_2}(\xi)
    \e{-|y|\sqrt{|\xi|^2-\mu_2^2}+z\sqrt{|\xi|^2-\mu_2^2}} m_4(\xi) \cF_{n-1}[f(\cdot,z)](\xi) \right)(x)
    \dz\\
    &=  \int_0^\infty \cF_{n-1}^{-1}\left( 1_{A_1}(\xi)
    \e{-(|y|+|z|)\sqrt{|\xi|^2-\mu_1^2}} (|\xi|^2-\mu_1^2)^{-\frac{1}{2}} \tilde m_1(|\xi|)
    \cF_{n-1}[f(\cdot,z)](\xi) \right)(x) \dz \\
    &+ \int_{-\infty}^0 \cF_{n-1}^{-1}\left( 1_{A_2}(\xi)
    \e{-(|y|+|z|)\sqrt{|\xi|^2-\mu_2^2}} \tilde  m_2(|\xi|)
    \cF_{n-1}[f(\cdot,z)](\xi) \right)(x) \dz \\
    &+\int_0^\infty \cF_{n-1}^{-1}\left( 1_{A_2}(\xi)
    \e{-(|y|+|z|)\sqrt{|\xi|^2-\mu_2^2}}  \tilde m_3(|\xi|)
    \cF_{n-1}[f(\cdot,z)](\xi) \right)(x) \dz\\
    &+ \int_{-\infty}^0 \cF_{n-1}^{-1}\left( 1_{A_2}(\xi)
    \e{-(|y|+|z|)\sqrt{|\xi|^2-\mu_2^2}} (|\xi|^2-\mu_2^2)^{-\frac{1}{2}}\tilde  m_4(|\xi|)
    \cF_{n-1}[f(\cdot,z)](\xi) \right)(x) \dz.
\end{align*}
Here, the functions $\tilde m_1,\ldots,\tilde m_4$ are defined by
\begin{align*}
  \tilde m_1(|\xi|)
  &= m_1(\xi)(|\xi|^2-\mu_1^2)^{\frac{1}{2}}  
   = \frac{ \sqrt{\pi/2}}{\nu_1(\xi)+\nu_2(\xi)}\cdot \left(\sign(y)\nu_1(\xi)-\nu_2(\xi)\right), \\
  \tilde m_4(|\xi|)
  &= m_4(\xi) (|\xi|^2-\mu_2^2)^{\frac{1}{2}} 
  = \frac{\sqrt{\pi/2}}{\nu_1(\xi)+\nu_2(\xi)}\cdot
  \left(-\sign(y)\nu_2(\xi)- \nu_1(\xi)\right)
\end{align*}
and
\begin{align*} 
  \tilde m_2(|\xi|)
  &= m_2(\xi)  \e{|y|(\sqrt{|\xi|^2-\mu_2^2}-\sqrt{|\xi|^2-\mu_1^2})} 
  = \frac{\i\sqrt{\pi/2}(1+\sign(y))}{\nu_1(\xi)+\nu_2(\xi)}\cdot 
  \e{|y|(\sqrt{|\xi|^2-\mu_2^2}-\sqrt{|\xi|^2-\mu_1^2})},  \\
  \tilde m_3(|\xi|)
  &= m_3(\xi) \e{|z|(\sqrt{|\xi|^2-\mu_2^2}-\sqrt{|\xi|^2-\mu_1^2})}  
   = \frac{\i\sqrt{\pi/2}(1-\sign(y))}{\nu_1(\xi)+\nu_2(\xi)}\cdot  
   \e{|z|(\sqrt{|\xi|^2-\mu_2^2}-\sqrt{|\xi|^2-\mu_1^2})}.  
\end{align*}
Notice that $\tilde m_1,\ldots,\tilde m_4$ are indeed radially symmetric because so are $\nu_1,\nu_2$. 
From $|\nu_1(\xi)+\nu_2(\xi)|\gtrsim 1$ and $\mu_1<\mu_2$ we infer that all four terms are 
bounded independently of $y,z$.  So we apply 
Theorem~\ref{thm:Tlambdaalpha_d=1} $(n=2)$ resp. Theorem~\ref{thm:Tlambdaalpha} $(n\geq 3)$ to bound these
integrals for any fixed $y,z\in\R$. The assumptions of these theorems are satisfied  
because our assumption in Proposition~\ref{prop:intermediatefreq} implies for $d=n-1$ and
$\alpha\in\{0,\frac{1}{2}\}$ 
$$
  \frac{1}{p}>\frac{n+1}{2n} \geq \frac{1}{2}+\frac{\alpha}{2d},\qquad
  \frac{1}{q}<\frac{n-1}{2n} \leq \frac{1}{2}-\frac{\alpha}{2d},\qquad 
  \frac{1}{p}-\frac{1}{q}\geq \frac{2}{n+1}\geq \frac{2\alpha}{d+1}. 
$$
From the above-mentioned Theorems we get
\begin{align*}
  \|\mathfrak{W}(\cdot,y)\|_{L^q(\R^{n-1})}    
  &\les\int_\R  (1+|y|+|z|)^{\gamma} \|f(\cdot,z)\|_{L^p(\R^{n-1})} \:\mathrm{d}z
\end{align*}
where $\gamma\leq  -1+\frac{1}{p}-\frac{1}{q}$  
and $\gamma<-1+\frac{1}{p}-\frac{1}{q}$ for  $p=1$ or $q=\infty$. In the latter case
the classical version of Young's convolution inequality applies and gives
\begin{align*}
  \|\mathfrak{W}\|_{L^q(\R^n)}
  &\les \|(1+|\cdot|)^\gamma \ast \|f(\cdot,z)\|_{L^p(\R^{n-1})}\|_{L^q(\R)} \\
  &\les \|(1+|\cdot|)^\gamma\|_{L^{\frac{pq}{pq-q+p}}(\R)} 
  \left(\int_\R \|f(\cdot,z)\|_{L^p(\R^{n-1})}^p\:\mathrm{d}z\right)^{\frac{1}{p}}   \\
  &\les \|f\|_{L^p(\R^n)}. 
  \intertext{ 
In the former case Young's convolution inequality in weak Lebesgue spaces \cite[Theorem~1.4.25]{Grafakos} is
applicable and yields} 
  \|\mathfrak{W}\|_{L^q(\R^n)}
  &\les \|(1+|\cdot|)^{-1+\frac{1}{p}-\frac{1}{q}} \ast \|f(\cdot,z)\|_{L^p(\R^{n-1})}\|_{L^q(\R)} \\
  &\les \|(1+|\cdot|)^{-1+\frac{1}{p}-\frac{1}{q}}\|_{L^{\frac{pq}{pq-q+p},w}(\R)} 
  \left(\int_\R \|f(\cdot,z)\|_{L^p(\R^{n-1})}^p\:\mathrm{d}z\right)^{\frac{1}{p}}  \\
  &\les \|f\|_{L^p(\R^n)}. 
\end{align*}
\qed

\section{Proof of Proposition~\ref{prop:largefreq}} \label{sec:largefreq}

We recall that have to prove the following: For all $n\in\N,n\geq 2$ and $p', q \in [2,\infty]$ we have  
$$
  \|W\|_{L^q(\R^n)} + \sup_{0<|\eps|\leq 1}  \norm{W_{\eps}}_{L^q(\R^n)} \les \norm{f}_{L^p(\R^n)}.
$$
provided that $0 \leq \frac{1}{p} - \frac{1}{q} \leq \frac{2}{n}$
and $\frac{1}{p} - \frac{1}{q} < \frac{2}{n}$ if $p=1$ or $q=\infty$.
Here, 
\begin{align*}
    W(x,y)&= \cF_{n-1}^{-1}\left(
    \e{\i |y|\nu_{1}} \ind_{|\cdot|> \mu_1 + \mu_2}
    (m_1 g_++  m_2 g_-)\right)(x) \\
    &\;+ \cF_{n-1}^{-1}\left(\e{\i |y|\nu_{2}} \ind_{|\cdot|> \mu_1 + \mu_2}
     (m_3 g_+ +   m_4 g_-) \right)(x).
\end{align*}   
and $W_\eps$ is given by the same formula with $\nu_1,\nu_2$ replaced by $\nu_{1,\eps},\nu_{2,\eps}$,
respectively. We recall $g_+(\xi)=\cF_n^+f(\xi,-\nu_1(\xi))$ and $g_-(\xi)=\cF_n^-(\xi,\nu_2(\xi))$.

\medskip

\noindent\textbf{Proof of Proposition~\ref{prop:largefreq}:}
  Again we concentrate on the estimates for $W$ since the corresponding modifications for
  $W_{\eps}$ are purely technical. We recall that for $|\xi|\geq R:=\mu_1+\mu_2$ 
  we have $|m_1(\xi)|+\ldots+|m_4(\xi)|\les (1+|\xi|)^{-1}$ as well as $\i\nu_j(\xi)  = -\sqrt{|\xi|^2 -
  \mu_j^2} \leq - c(|\xi|+1)$ for some $c>0$, see~\eqref{eq:mj_estimates} and~\eqref{eq:nu}.  
  So the Hausdorff-Young inequality implies
 \begin{align*}
   \norm{W(\cdot,y)}_{L^q(\R^{n-1})}
   &\les  \left\| \e{\i |y|\nu_{1}}(m_1 g_+ + m_2 g_-) 
     + \e{\i |y|\nu_{2}} (m_3 g_+ + m_4 g_-) \right\|_{L^{q'}(\R^{n-1}\sm B_R(0))} \\
   &\les    \left\|
    \e{-c|y|(|\cdot|+1)} ( |\cdot|+1)^{-1} (|g_+| + |g_-|) 
    \right\|_{L^{q'}(\R^{n-1}\sm B_R(0))}  \\
   &\les  \int_\R  \left\|
    \e{-c(|y|+|z|)(|\cdot|+1)} (|\cdot|+1)^{-1} \cF_{n-1}[f(\cdot,z)]  
    \right\|_{L^{q'}(\R^{n-1})} \dz.     
    \intertext{In the last step we applied Minkowski's inequality in integral form. Using now H\"older's and
    then the Hausdorff-Young inequality we get} 
    \|W\|_{L^q(\R^n)}
    &  \leq
	\left(  
		\int_\R
		\left(  
		\int_\R		
		\norm{\frac{\e{- c (|z| + |y|) (|\cdot|+ 1)}}{|\,\cdot\,| + 1}}_{L^{\frac{pq}{q-p}}(\R^{n-1})}
		\: \norm{\cF_{n-1} [f(\,\cdot\, , z)]}_{L^{p'}(\R^{n-1})}
		\: \mathrm{d} z
		\right)^q
		\: \mathrm{d}y
	\right)^\frac{1}{q}
	\\
	&  \les
	\left(  
		\int_\R
		\left(  
		\int_\R		
			\norm{\frac{\e{- c  (|z| + |y|) (|\cdot|+ 1)}}{|\,\cdot\,| + 1}}_{L^{\frac{pq}{q-p}}(\R^{n-1})}
			\: \norm{f(\,\cdot\, , z)}_{L^{p}(\R^{n-1})}
		\: \mathrm{d} z
		\right)^q
		\: \mathrm{d}y
	\right)^\frac{1}{q}
	\\
	&  \les \norm{K \ast F}_{L^q(\R)}
\end{align*}     
where
\begin{align*}
	K(w) =
	\norm{(|\cdot| + 1)^{-1} \:
			\e{- c |w| (|\,\cdot\,| + 1)} }_{L^{\frac{pq}{q-p}}(\R^{n-1})}
		\quad \text{and} \quad
		F(w) = \norm{f(\,\cdot\, , w)}_{L^p(\R^{n-1})}.
\end{align*}
For $w > 0$ we have
\begin{align*}
	K(w) 
	&\eqsim
	\e{-c  w} \,  \left( 
		\int_0^1
			r^{n-2}  
		\: \mathrm{d}r
		+
		\int_1^\infty
			r^{n-2 - \frac{pq}{q-p}}  
			\: \: \e{-c \frac{pq}{q-p} w r}
		\: \mathrm{d}r
		\right)^\frac{q-p}{pq}
		\\
		&=
		\e{-c  w} \,  \left( 
		\int_0^1
			r^{n-2}  
		\: \mathrm{d}r
		+
		w^{-(n-1) + \frac{pq}{q-p}} \, 
		\int_w^\infty
			t^{n-2 - \frac{pq}{q-p}}  
			\: \: \e{-c \frac{pq}{q-p} t}
		\: \mathrm{d}t
		\right)^\frac{q-p}{pq}
		\\
		&\eqsim
		1_{[1, \infty)}(w) \e{-c  w} +  1_{(0,1)}(w) w^{-(n-1)\frac{q-p}{pq} + 1}.
\end{align*}
Thus $K \in L^{pq/(pq+p-q), \infty}(\R)$ if and only if
\begin{align*}
	\left( -(n-1)\frac{q-p}{pq} + 1\right) \cdot \frac{pq}{pq+p-q} \geq -1,
	\quad \text{equivalently} \quad
	\frac{1}{p} - \frac{1}{q} \leq \frac{2}{n},
\end{align*}
which is precisely the assumption in the proposition. Similarly, $K \in L^{pq/(pq+p-q)}(\R)$ if and only if
$\frac{1}{p} - \frac{1}{q}< \frac{2}{n}$.
So, as in the proof of
Proposition~\ref{prop:intermediatefreq}, the classical and weak-space versions of Young's convolution
inequality imply $\norm{W}_{L^q(\R^n)}\les \norm{f}_{L^p(\R^n)}$, which is all we had to show. 
\qed


\section*{Acknowledgments}
 
Funded by the Deutsche Forschungsgemeinschaft (DFG, German Research Foundation) - Project-ID 258734477 - SFB
1173.

 \section*{Conflict of interest}
 
 The authors declare that they have no conflict of interest.

\bibliographystyle{plain}
\bibliography{doc}

\end{document}